\numberwithin{equation}{section}
\begin{document}

\title[On the asymptotic expansions II]
{On the asymptotic expansions of various quantum invariants II: the colored Jones polynomial of twist knots at the root of unity $e^{\frac{2\pi\sqrt{-1}}{N+\frac{1}{M}}}$ and $e^{\frac{2\pi\sqrt{-1}}{N}}$}
\author[Qingtao Chen and Shengmao Zhu]{Qingtao Chen and
Shengmao Zhu}

\address{Division of Science \\
New York University Abu Dhabi \\
Abu Dhabi \\
United Arab Emirates}
\email{chenqtao@nyu.edu}, \email{chenqtao@hotmail.com}

\address{Department of Mathematics \\
Zhejiang Normal University  \\
Jinhua Zhejiang,  321004, China }
\email{szhu@zju.edu.cn}

\begin{abstract} 
This is the second article in a series devoted to the study of the asymptotic expansions of various quantum invariants related to the twist knots. In this article, following the method and results in \cite{CZ23-1}, we present an asymptotic expansion formula for the colored Jones polynomial of twist knot $\mathcal{K}_p$ with $p\geq 6$ at the root of unity $e^{\frac{2\pi\sqrt{-1}}{N+\frac{1}{M}}}$ with $M\geq 2$.
Furthermore, by taking the limit $M\rightarrow +\infty$, we obtain an asymptotic expansion formula for the colored Jones polynomial of twist knots $\mathcal{K}_p$ with $p\geq 6$ at the root of unity $e^{\frac{2\pi\sqrt{-1}}{N}}$. 
\end{abstract}

\maketitle

\theoremstyle{plain} \newtheorem{thm}{Theorem}[section] \newtheorem{theorem}[%
thm]{Theorem} \newtheorem{lemma}[thm]{Lemma} \newtheorem{corollary}[thm]{%
Corollary} \newtheorem{proposition}[thm]{Proposition} \newtheorem{conjecture}%
[thm]{Conjecture} \theoremstyle{definition}
\newtheorem{remark}[thm]{Remark}
\newtheorem{remarks}[thm]{Remarks} \newtheorem{definition}[thm]{Definition}
\newtheorem{example}[thm]{Example}





\tableofcontents
\newpage

\section{Introduction}
In the first paper of this series \cite{CZ23-1}, we have gotten an asymptotic expansion formula of the colored Jones polynomial for twist knot at the root of unity $e^{\frac{2\pi\sqrt{-1}}{N+\frac{1}{2}}}$. The motivation of that work \cite{CZ23-1} is to study a version of volume conjecture for colored Jones polynomial  proposed in \cite{DKY18} which states that  for a hyperbolic link $\mathcal{L}$ in $S^3$, we have 
   \begin{align}
       \lim_{N\rightarrow \infty}\frac{2\pi}{N} \log|J_{N}(\mathcal{L}; e^{\frac{2\pi\sqrt{-1}}{N+\frac{1}{2}}})|=vol(S^3\setminus \mathcal{L}).
   \end{align} 

In the present paper, we first consider the colored Jones polynomial at more general root of unity $e^{\frac{2\pi\sqrt{-1}}{N+\frac{1}{M}}}$ with $M\geq 2$.  We obtain an similar asymptotic expansion formula for the colored Jones polynomial for the twist knot $\mathcal{K}_p$ with $p\geq 6$ similar to the case of $M=2$ in \cite{CZ23-1}. The advantage of using the root of unity $e^{\frac{2\pi\sqrt{-1}}{N+\frac{1}{M}}}$ is that now we have an additional parameter $M$, so we can take the limit $\lim_{M \rightarrow +\infty}e^{\frac{2\pi\sqrt{-1}}{N+\frac{1}{M}}}=e^{\frac{2\pi\sqrt{-1}}{N}}$. Hence it provide a new way to study the asymptotic expansion of the colored Jones polynomial at the root of unity $e^{\frac{2\pi\sqrt{-1}}{N}}$.  It is well-known that the original Volume Conjecture due to Kashaev-Murakami-Murakami \cite{Kash97,MuMu01} was proposed at the root of unity  $e^{\frac{2\pi\sqrt{-1}}{N}}$. 

 In the second part of this paper, we prove that it is reasonable to take the limit, so we obtain an asymptotic expansion formula for colored Jones polynomial of twist knot $\mathcal{K}_p$ with $p\geq 6$ at the root of unity $e^{\frac{2\pi\sqrt{-1}}{N}}$. This work makes a connection between the asymptotic expansion of the colored Jones polynomial at different roots of unity. 
 
 The main results of this paper are as follows. 
Let $V(p,t,s)$ be the potential function of the colored Jones polynomial for the twist knot $\mathcal{K}_p$ given by formula (\ref{formula-Vpts}). 
By Proposition \ref{prop-critical}, there exists a unique critical point $(t_0,s_0)$ of $V(p,t,s)$. Let $x_0=e^{2\pi\sqrt{-1}t_0}$ and $y_0=e^{2\pi\sqrt{-1}s_0}$, we put
\begin{align}
  \zeta(p)&=V(p,t_0,s_0)\\\nonumber
  &=\pi \sqrt{-1}\left((2p+1)s_0^2-(2p+3)s_0-2t_0\right)\\\nonumber
    &+\frac{1}{2\pi\sqrt{-1}}\left(\text{Li}_2(x_0y_0)+\text{Li}_2(x_0/y_0)-3\text{Li}_2(x_0)+\frac{\pi^2}{6}\right)  
\end{align}
and 
\begin{align} \label{formula-omega0}
    \omega(p)&=\frac{\sin (2\pi s_0)e^{2\pi\sqrt{-1}t_0}}{(1-e^{2\pi\sqrt{-1}t_0})^{\frac{3}{2}}\sqrt{\det Hess(V)(t_0,s_0)}}\\\nonumber
    &=\frac{(y_0-y_0^{-1})x_0}{-4\pi (1-x_0)^\frac{3}{2}\sqrt{H(p,x_0,y_0)}}
\end{align}
with
\begin{align}
    H(p,x_0,y_0)&=\left(\frac{-3(2p+1)}{\frac{1}{x_0}-1}+\frac{2p+1}{\frac{1}{x_0y_0}-1}+\frac{2p+1}{\frac{1}{x_0/y_0}-1}-\frac{3}{(\frac{1}{x_0}-1)(\frac{1}{x_0y_0}-1)}\right.\\\nonumber
    &\left.-\frac{3}{(\frac{1}{x_0}-1)(\frac{1}{x_0/y_0}-1)}+\frac{4}{(\frac{1}{x_0y_0}-1)(\frac{1}{x_0/y_0}-1)}\right).
\end{align}

\begin{theorem}  \label{theorem-main}
   For $p\geq 6$ and $M\geq 2$, the asymptotic expansion of the colored Jones polynomial  of the twist knot $\mathcal{K}_p$ at the root of unity $e^{\frac{2\pi \sqrt{-1}}{N+\frac{1}{M}}}$ is given by the following form
    \begin{align}
J_N(\mathcal{K}_p;e^{\frac{2\pi \sqrt{-1}}{N+\frac{1}{M}}})&=(-1)^{p}\frac{4\pi e^{\pi\sqrt{-1}(\frac{1}{4}+\frac{2}{M})}(N+\frac{1}{M})^{\frac{1}{2}}\sin\frac{\pi}{M}}{\sin\frac{\frac{\pi}{M}}{N+\frac{1}{M}}}\omega(p)e^{(N+\frac{1}{M})\zeta(p)}\\\nonumber
       &\cdot\left(1+\sum_{i=1}^d\kappa_i(p,\frac{1}{M})\left(\frac{2\pi\sqrt{-1}}{N+\frac{1}{M}}\right)^i+O\left(\frac{1}{(N+\frac{1}{M})^{d+1}}\right)\right),
    \end{align}
    for $d\geq 1$, where $\omega(p)$ and $\kappa_i(p,\frac{1}{M})$ are constants determined by $\mathcal{K}_p$, and $\omega(p)$ is given by formula (\ref{formula-omega0}).
\end{theorem}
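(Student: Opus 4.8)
The plan is to follow the saddle point (steepest descent) strategy developed in \cite{CZ23-1} for the case $M=2$, now carried out with $M$ as a free parameter. First I would start from an explicit multisum formula for $J_N(\mathcal{K}_p;q)$ --- for twist knots this takes the shape of a double sum of products of quantum factorials (of Masbaum type) --- and substitute $q=e^{2\pi\sqrt{-1}/(N+\frac{1}{M})}$. Writing each quantum factorial via the asymptotic expansion of the quantum dilogarithm at this root of unity, the summand acquires the form $e^{(N+\frac{1}{M})V(p,t,s)}$ times a slowly varying amplitude, where $t,s$ are the summation indices rescaled by $N+\frac{1}{M}$ and $V(p,t,s)$ is precisely the potential function of formula (\ref{formula-Vpts}). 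The $\mathrm{Li}_2$ terms appearing in $\zeta(p)$ are the signature of these quantum dilogarithm asymptotics.

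Next I would convert the double sum into a double contour integral by Poisson summation, isolating the $(0,0)$ Fourier mode as the dominant contribution while bounding the remaining modes. The $M$-dependent prefactor --- the factor $4\pi e^{\pi\sqrt{-1}(\frac14+\frac{2}{M})}(N+\frac{1}{M})^{\frac12}\sin\frac{\pi}{M}/\sin(\frac{\pi/M}{N+1/M})$ --- should emerge from the boundary and normalization terms of this step together with the leading quantum-integer asymptotics; keeping careful track of how every factor of $M$ enters (both through $q$ and through the explicit $\pi/M$ terms) is the bookkeeping heart of the computation.

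With the integral representation in hand, I would apply the two-dimensional saddle point method at the unique critical point $(t_0,s_0)$ guaranteed by Proposition \ref{prop-critical}. Deforming the contour to pass through $(t_0,s_0)$ along the steepest-descent directions, the leading Gaussian integral produces the factor $1/\sqrt{\det \mathrm{Hess}(V)(t_0,s_0)}$, which combines with the amplitude $\sin(2\pi s_0)e^{2\pi\sqrt{-1}t_0}/(1-e^{2\pi\sqrt{-1}t_0})^{3/2}$ to yield exactly $\omega(p)$ as in formula (\ref{formula-omega0}), while the exponential is $e^{(N+\frac{1}{M})\zeta(p)}$ with $\zeta(p)=V(p,t_0,s_0)$. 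The higher coefficients $\kappa_i(p,\frac{1}{M})$ are then read off from the standard Laplace--Morse expansion: they are universal polynomial combinations of the higher derivatives of $V$ at $(t_0,s_0)$ and the Taylor coefficients of the amplitude, each carrying the appropriate power of $\frac{2\pi\sqrt{-1}}{N+1/M}$.

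The main obstacle will be the rigorous justification of the contour deformation and the error control, rather than the formal expansion itself. Concretely, one must verify that the steepest-descent contour through $(t_0,s_0)$ can be reached without crossing the singularities of the quantum dilogarithms (the branch loci of the $\mathrm{Li}_2$ terms and of the factor $(1-x_0)^{3/2}$), that $\mathrm{Re}\,V$ attains a strict maximum along this contour only at the critical point, and that the discarded Poisson modes together with the tails of the integral are exponentially smaller than the leading term --- all uniformly enough in $N$ to legitimize the term-by-term asymptotics up to order $d$. For $p\geq 6$ the required non-degeneracy of $V$ (in particular $\det \mathrm{Hess}(V)(t_0,s_0)\neq 0$) and the location of $(t_0,s_0)$ away from the singular set should follow as in \cite{CZ23-1}; the extra parameter $M\geq 2$ does not disturb these geometric facts, since the critical point of $V$ is independent of $M$ and all $M$-dependence is confined to the explicit prefactor and to the coefficients $\kappa_i$.
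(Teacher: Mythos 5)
Your overall strategy---rewriting the Habiro--Masbaum double sum via the quantum dilogarithm $\varphi_{N,\frac{1}{M}}$ to extract the potential function, smoothing with a bump function and applying Poisson summation, then running Ohtsuki's two-dimensional saddle point method at the unique critical point of Proposition \ref{prop-critical}---is exactly the route the paper takes in Sections \ref{Section-potential}--\ref{Section-Asym1}. The genuine gap is in your treatment of the Fourier modes, and it sits precisely where the $M$-dependent prefactor is born. You propose to ``isolate the $(0,0)$ Fourier mode as the dominant contribution while bounding the remaining modes.'' This fails in two ways. First, the symmetry $V_{N,\frac{1}{M}}(p,t,1-s;m,n)=V_{N,\frac{1}{M}}(p,t,s;m,-n-2)-2\pi\sqrt{-1}(n+1)$ forces $\hat{h}_{N,\frac{1}{M}}(m,-n-2)=-e^{\frac{2\pi\sqrt{-1}(n+1)}{M}}\hat{h}_{N,\frac{1}{M}}(m,n)$ (Proposition \ref{prop-hathmn}); in particular $\hat{h}_{N,\frac{1}{M}}(0,-2)$ has exactly the same modulus as $\hat{h}_{N,\frac{1}{M}}(0,0)$ and cannot be thrown into the error term. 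The true leading contribution is the pair $\hat{h}_{N,\frac{1}{M}}(0,0)+\hat{h}_{N,\frac{1}{M}}(0,-2)=\bigl(1-e^{\frac{2\pi\sqrt{-1}}{M}}\bigr)\hat{h}_{N,\frac{1}{M}}(0,0)$, and the identity $1-e^{\frac{2\pi\sqrt{-1}}{M}}=-2\sqrt{-1}\,e^{\frac{\pi\sqrt{-1}}{M}}\sin\frac{\pi}{M}$ is what produces the factor $\sin\frac{\pi}{M}$ and part of the phase $e^{\pi\sqrt{-1}(\frac14+\frac{2}{M})}$ in the stated prefactor; these do \emph{not} come from ``boundary and normalization terms'' as you suggest (only the denominator $\sin\frac{\pi/M}{N+\frac1M}$ comes from the normalization $1/\{N\}$). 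Keeping only $(0,0)$ gives a leading constant off by the factor $1-e^{\frac{2\pi\sqrt{-1}}{M}}$. Second, the same symmetry yields $\hat{h}_{N,\frac{1}{M}}(m,-1)=0$, the ``big cancellation'' of formula (\ref{formula-hnmsy}); these modes are not amenable to the direct bounding you propose, and their identical vanishing is an essential ingredient, not a convenience.

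A secondary but substantive omission: the saddle point method is applied to the $N$-dependent function $V_{N,\frac{1}{M}}(p,t,s;m,n)$, not to $V(p,t,s;m,n)$, so one needs the uniform (in $M$) expansion of Lemma \ref{lemma-VMNV},
\begin{align*}
V_{N,\frac{1}{M}}=V-\tfrac{1}{2(N+\frac1M)}\Bigl(\log(1-e^{2\pi\sqrt{-1}(t+s)})+\log(1-e^{2\pi\sqrt{-1}(t-s)})-4\pi\sqrt{-1}t\Bigr)+O\bigl((N+\tfrac1M)^{-2}\bigr),
\end{align*}
together with the extended saddle point method of Remark \ref{remark-saddle}. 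You fold this into the ``slowly varying amplitude,'' which is fine in spirit, but it is where the factor $(1-e^{2\pi\sqrt{-1}t_0})^{-1/2}$ beyond the visible amplitude enters $\omega(p)$ and where the coefficients $\kappa_i(p,\frac1M)$ acquire their $M$-dependence. Your remaining points (uniqueness and nondegeneracy of the critical point, exponential smallness of the tails outside the compact region, and $M$-independence of the limiting potential $V$) are correct and are handled in the paper by importing the corresponding results from \cite{CZ23-1}.
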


Theorem \ref{theorem-main} holds for any $M\geq 2$, note that we have the following 
\begin{align}
    \lim_{M\rightarrow \infty}J_N(\mathcal{K}_p;e^{\frac{2\pi \sqrt{-1}}{N+\frac{1}{M}}})=J_N(\mathcal{K}_p;e^{\frac{2\pi \sqrt{-1}}{N}}),
\end{align}
since the colored Jones polynomial $J_N(\mathcal{K}_p;q)$ is a polynomial of $q$. Then we prove that under the limit $M\rightarrow +\infty$, we have 
\begin{theorem} \label{theorem-main2}
For $p\geq 6$, the asymptotic expansion of the colored Jones polynomial of the twist knot $\mathcal{K}_p$ at the root of unity $e^{\frac{2\pi \sqrt{-1}}{N}}$ is given by  the following form
\begin{align}
  J_{N}(\mathcal{K}_p;e^{\frac{2\pi \sqrt{-1}}{N}})&=(-1)^p4\pi e^{\frac{\pi\sqrt{-1}}{4}}N^{\frac{3}{2}}\omega(p)e^{N\zeta(p)}\\\nonumber
       &\cdot\left(1+\sum_{i=1}^d\kappa_i(p)\left(\frac{2\pi\sqrt{-1}}{N}\right)^i+O\left(\frac{1}{N^{d+1}}\right)\right),
    \end{align}
    for $d\geq 1$, where $\omega(p)$ and $\kappa_i(p)$ are constants determined by $\mathcal{K}_p$,  and $\omega(p)$ is given by formula (\ref{formula-omega0}).
\end{theorem}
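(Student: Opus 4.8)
The plan is to deduce Theorem~\ref{theorem-main2} from Theorem~\ref{theorem-main} by letting $M\to+\infty$. The starting observation is that, for each fixed $N$, the colored Jones polynomial $J_N(\mathcal{K}_p;q)$ is a (Laurent) polynomial in $q$, hence continuous in $q$, so
\begin{align}
\lim_{M\to\infty}J_N(\mathcal{K}_p;e^{\frac{2\pi\sqrt{-1}}{N+\frac{1}{M}}})=J_N(\mathcal{K}_p;e^{\frac{2\pi\sqrt{-1}}{N}}).
\end{align}
The substance of the theorem is therefore to justify interchanging the two limits $M\to\infty$ (which identifies the root of unity) and $N\to\infty$ (which governs the asymptotic expansion), and then to read off the resulting prefactor.

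First I would take the $M\to\infty$ limit of each explicit factor in the prefactor of Theorem~\ref{theorem-main}. The only delicate factor is the ratio of sines, for which
\begin{align}
\frac{\sin\frac{\pi}{M}}{\sin\frac{\frac{\pi}{M}}{N+\frac{1}{M}}}=\frac{\sin\frac{\pi}{M}}{\sin\frac{\pi}{MN+1}}\longrightarrow N\qquad(M\to\infty),
\end{align}
since both arguments tend to $0$ and $\sin u\sim u$. Combined with $e^{\pi\sqrt{-1}(\frac14+\frac{2}{M})}\to e^{\frac{\pi\sqrt{-1}}{4}}$, $(N+\frac{1}{M})^{\frac12}\to N^{\frac12}$, and $e^{(N+\frac1M)\zeta(p)}\to e^{N\zeta(p)}$, the prefactor converges to
\begin{align}
(-1)^p 4\pi e^{\frac{\pi\sqrt{-1}}{4}}N^{\frac12}\cdot N\cdot\omega(p)e^{N\zeta(p)}=(-1)^p 4\pi e^{\frac{\pi\sqrt{-1}}{4}}N^{\frac32}\omega(p)e^{N\zeta(p)},
\end{align}
which is exactly the prefactor in Theorem~\ref{theorem-main2}; this is where the extra power $N^{\frac12}\mapsto N^{\frac32}$ arises. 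Likewise $\kappa_i(p,\frac1M)\to\kappa_i(p)$ and $(\frac{2\pi\sqrt{-1}}{N+\frac1M})^i\to(\frac{2\pi\sqrt{-1}}{N})^i$, so each term of the finite sum passes to the limit.

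The hard part is the uniformity of the remainder. The expansion in Theorem~\ref{theorem-main} is an $N\to\infty$ statement for each fixed $M$, so an a priori bound $O(1/(N+\frac1M)^{d+1})$ with an $M$-dependent implied constant is not by itself enough to survive the interchange of limits. To fix this I would revisit the proof of Theorem~\ref{theorem-main} and track the $M$-dependence of every estimate, showing that the implied constant can be chosen uniformly for all $M\ge 2$. The crucial structural fact that makes this feasible is that the potential function $V(p,t,s)$, its unique critical point $(t_0,s_0)$ from Proposition~\ref{prop-critical}, and the Hessian are all independent of $M$; the parameter $M$ enters only through the elementary prefactor and through the shift $N\mapsto N+\frac1M$ in the saddle-point analysis underlying Theorem~\ref{theorem-main}. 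Hence one obtains a uniform estimate $|R_{M,d}(N)|\le C_d/(N+\frac1M)^{d+1}$ valid for all $M\ge2$ and $N\ge N_0$, with $C_d$ independent of $M$.

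Given such a uniform bound, for each fixed large $N$ the left-hand side converges and the explicit factors converge to nonzero limits, so writing $R_{M,d}(N)=J_N/A_M(N)-1-\sum_{i=1}^d\kappa_i(p,\frac1M)(\frac{2\pi\sqrt{-1}}{N+\frac1M})^i$ shows that $R_{M,d}(N)$ itself converges as $M\to\infty$ to some $R_{\infty,d}(N)$. Passing to the limit in the uniform inequality then yields $|R_{\infty,d}(N)|\le C_d/N^{d+1}=O(1/N^{d+1})$, which completes the proof. I expect this uniform control of the saddle-point remainder across all $M\ge 2$ to be the main obstacle, the rest being the elementary limits recorded above.
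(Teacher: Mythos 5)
Your formal limit computations are all correct: the sine ratio does tend to $N$, the prefactor does become $(-1)^p4\pi e^{\pi\sqrt{-1}/4}N^{3/2}\omega(p)e^{N\zeta(p)}$, and the whole problem does reduce to interchanging the limits $M\to\infty$ and $N\to\infty$. The gap is that this interchange \emph{is} the theorem, and your justification for it --- that the implied constant in $O(1/(N+\frac1M)^{d+1})$ can be made uniform in $M$ because ``$M$ enters only through the elementary prefactor and through the shift $N\mapsto N+\frac1M$'' --- rests on an inaccurate premise and leaves the real difficulty untouched. First, $M$ also enters the potential $V_{N,\frac1M}$ itself, through the terms $\varphi_{N,\frac1M}\bigl(t\pm\tfrac{1/M}{N+1/M}\bigr)$ and the explicit $\tfrac{12}{M^2}$ corrections. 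Second, and more seriously, each individual Fourier coefficient $\hat h_{N,\frac1M}(m,n)$ in the Poisson-summation representation underlying Theorem \ref{theorem-main} carries the factor $1/\sin\frac{\pi/M}{N+1/M}\sim MN/\pi$, which \emph{diverges} as $M\to\infty$ for fixed $N$. The series only stays bounded because of the cancellation $\hat h_{N,\frac1M}(m,-n-2)=-e^{2\pi\sqrt{-1}(n+1)/M}\hat h_{N,\frac1M}(m,n)$ of Proposition \ref{prop-hathmn}, which pairs coefficients into $\tilde h_{N,\frac1M}(m,n)=(1-e^{2\pi\sqrt{-1}(n+1)/M})\hat h_{N,\frac1M}(m,n)$ with the compensating factor $2\sin\frac{(n+1)\pi}{M}$, and even then one must bound the sum over $(m,n)$ uniformly in $M$ via an integration-by-parts (Laplacian) argument yielding $(m^2+n^2)^{-2}$ decay. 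None of this is visible from the statement of Theorem \ref{theorem-main}, so ``track the $M$-dependence of every estimate'' is where the entire proof lives, not a routine verification. You also assert $\kappa_i(p,\frac1M)\to\kappa_i(p)$ without argument; since these coefficients come from the saddle-point expansion of the $M$-dependent potential, this needs justification as well.

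The paper sidesteps the need for $M$-uniformity of the full expansion to all orders by interchanging the limits one level earlier: for each fixed $N$ it passes $M\to\infty$ through the exact Poisson-summation representation --- a uniform bound on the remainder outside $D'_0$ (Lemma \ref{lemma-R}), uniform summability of $\sum\tilde h_{N,\frac1M}(m,n)$ via the cancellation and the Laplacian trick (Lemma \ref{lemma-Ssum} and Remark \ref{remark-converges}), and dominated convergence inside each coefficient (Lemma \ref{lemma-Lebes}) --- to obtain $J_N(\mathcal K_p;\xi_{N,0})=\sum_{(m,n)\in\mathcal S}\tilde h_{N,0}(m,n)+O(e^{N(\zeta_{\mathbb R}(p)-\epsilon)})$ in Proposition \ref{prop-JNlimit}, and only then runs the saddle-point method on the limiting coefficients. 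To salvage your route you would have to prove essentially the same three uniform estimates plus the convergence of the $\kappa_i$, so the argument should be restructured along the paper's lines rather than as a limit of the finished expansion.
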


In \cite{CZ23-1},  we have proved the following
\begin{lemma} [\cite{CZ23-1}, Lemma 5.4]
\begin{align}
     2\pi \zeta(p)=vol(S^3\setminus \mathcal{K}_p)+\sqrt{-1}cs(S^3\setminus \mathcal{K}_p) \mod \pi^2\sqrt{-1}\mathbb{Z}.
\end{align}
where $vol(S^3\setminus \mathcal{K}_p)$ denotes the hyperbolic volume of the complement of $\mathcal{K}_p$ in $S^3$ and $cs(S^3\setminus \mathcal{K}_p)$ denotes the Chern-Simons invariant. 
\end{lemma}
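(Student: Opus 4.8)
The plan is to identify the critical value $\zeta(p)$ with Neumann's dilogarithmic formula for the complex volume of the cusped hyperbolic manifold $S^3\setminus\mathcal{K}_p$. The argument proceeds in three stages: first, show that the critical point equations of $V(p,t,s)$ reproduce Thurston's gluing and completeness equations for an ideal triangulation of the complement, so that $(x_0,y_0)$ encodes the geometric solution; second, invoke the standard formula expressing the complex volume through the shape parameters; third, reconcile the polynomial prefactor terms appearing in $\zeta(p)$ with the flattening corrections, working modulo $\pi^2\sqrt{-1}\mathbb{Z}$.

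First I would compute the two critical point equations $\partial V/\partial t = 0$ and $\partial V/\partial s = 0$ from the explicit form of $V(p,t,s)$. Passing to the exponentiated variables $x_0 = e^{2\pi\sqrt{-1}t_0}$ and $y_0 = e^{2\pi\sqrt{-1}s_0}$ converts the logarithmic derivatives of the dilogarithm terms into polynomial relations among $1-x_0$, $1-x_0y_0$ and $1-x_0/y_0$, which I would match against the gluing equations of the standard ideal triangulation of the twist knot complement. By Proposition \ref{prop-critical} the critical point is unique, so the essential geometric input is that this unique solution is the discrete faithful (geometric) one, giving the complete hyperbolic structure with shape parameters read off from $x_0$, $y_0$.

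With the geometric solution in hand, the second stage rests on the fact that for a manifold triangulated by ideal tetrahedra with shapes $z_j$ one has, in Neumann's extended Bloch group formalism,
\[
\frac{1}{\sqrt{-1}}\bigl(\mathrm{vol}(S^3\setminus\mathcal{K}_p) + \sqrt{-1}\,cs(S^3\setminus\mathcal{K}_p)\bigr) \equiv \sum_j \Bigl(\mathrm{Li}_2(z_j) + \tfrac12 \log z_j \log(1-z_j) - \tfrac{\pi^2}{6}\Bigr) \pmod{\pi^2}.
\]
Expressing $2\pi\zeta(p)$ through its dilogarithm part $\mathrm{Li}_2(x_0y_0) + \mathrm{Li}_2(x_0/y_0) - 3\,\mathrm{Li}_2(x_0) + \tfrac{\pi^2}{6}$, I would read the three tetrahedral shapes as $x_0y_0$, $x_0/y_0$ and the three copies attached to $x_0$ (the minus sign reflecting opposite orientation, handled via the inversion relation), and compare term by term with the right-hand side above, leaving the quadratic-linear terms $\pi\sqrt{-1}\bigl((2p+1)s_0^2 - (2p+3)s_0 - 2t_0\bigr)$ to be absorbed separately.

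The main obstacle, and the delicate part of the argument, is this final reconciliation: the polynomial terms in $\zeta(p)$ must precisely supply the $\tfrac12 \log z_j \log(1-z_j)$ flattening corrections, together with the $\tfrac{\pi^2}{6}$ constant already visible in $\zeta(p)$, so that the two expressions agree modulo $\pi^2\sqrt{-1}\mathbb{Z}$. This demands careful tracking of logarithm branches, since both the Chern-Simons invariant and the flattenings are only well defined modulo $\pi^2$; the ambiguity $\pi^2\sqrt{-1}\mathbb{Z}$ in the statement is exactly what absorbs these branch choices. I would use the inversion, reflection, and five-term functional equations of the dilogarithm to collapse the combination into Neumann's form, and verify that the framing-dependent coefficients $(2p+1)$ and $(2p+3)$ are consistent with the edge-gluing data of the triangulation, thereby completing the identification.
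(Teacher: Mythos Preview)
The paper does not prove this lemma here at all: it is stated with the attribution ``[\cite{CZ23-1}, Lemma 5.4]'' and simply imported from the companion paper, so there is no in-text argument to compare against. Your outline is the standard route one would expect such a proof to take---show that the critical equations of $V(p,t,s)$ cut out the geometric representation, and then identify the critical value with the Neumann--Zagier complex volume via dilogarithm identities and flattening data.

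That said, one step of your sketch is shakier than the rest and would not go through as written. You propose to read off ``three tetrahedral shapes'' as $x_0y_0$, $x_0/y_0$, and ``three copies attached to $x_0$ with opposite orientation'' from the combination $\mathrm{Li}_2(x_0y_0)+\mathrm{Li}_2(x_0/y_0)-3\,\mathrm{Li}_2(x_0)$. But the complement $S^3\setminus\mathcal{K}_p$ does not admit an ideal triangulation with a fixed small number of tetrahedra independent of $p$; any honest triangulation grows with $|p|$. The potential here comes from the Habiro--Masbaum sum, not from a state sum over a triangulation, so its dilogarithm terms are not in one-to-one correspondence with geometric tetrahedra. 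The correct mechanism is rather that the critical equations recover the holonomy of the discrete faithful representation (equivalently, a point on the $A$-polynomial curve), and the quadratic term $\pi\sqrt{-1}(2p+1)s_0^2$ encodes the $p$-dependence of the Dehn filling/longitude data; the identification with $\mathrm{vol}+\sqrt{-1}\,cs$ then goes through a Neumann--Zagier type argument or a direct comparison with a known formula for the complex volume of $\mathcal{K}_p$, not through a naive tetrahedron count. Your plan to use the five-term and inversion relations is the right toolkit, but you should expect to land on a two-variable Neumann--Zagier potential rather than a sum over a triangulation.
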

Then, Theorem \ref{theorem-main2} implies that
\begin{corollary}
    For $p\geq 6$, we have
    \begin{align}
        \lim_{N\rightarrow \infty}\frac{2\pi}{N}\log J_N(\mathcal{K}_p;e^{\frac{2\pi \sqrt{-1}}{N}})=vol(S^3\setminus \mathcal{K}_p)+\sqrt{-1}cs(S^3\setminus \mathcal{K}_p) \mod \pi^2\sqrt{-1}\mathbb{Z}.
    \end{align}
\end{corollary}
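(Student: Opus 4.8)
The plan is to deduce the corollary directly from the asymptotic expansion in Theorem \ref{theorem-main2} together with the stated Lemma identifying $2\pi\zeta(p)$ with the complexified volume. First I would take the (complex) logarithm of the right-hand side of the expansion for $J_N(\mathcal{K}_p; e^{2\pi\sqrt{-1}/N})$. Writing that formula schematically as $J_N = C(p)\, N^{3/2} e^{N\zeta(p)}\bigl(1 + O(1/N)\bigr)$, where $C(p) = (-1)^p 4\pi e^{\pi\sqrt{-1}/4}\omega(p)$ is a nonzero constant, the logarithm splits as
\begin{align}
\log J_N(\mathcal{K}_p; e^{2\pi\sqrt{-1}/N}) = N\zeta(p) + \tfrac{3}{2}\log N + \log C(p) + \log\bigl(1 + O(1/N)\bigr),
\end{align}
where the branch of $\log$ is the one dictated by the asymptotic formula itself.

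Next I would multiply through by $\frac{2\pi}{N}$ and let $N\to\infty$. The dominant term contributes $2\pi\zeta(p)$, while the remaining three terms are of lower order: $\frac{2\pi}{N}\cdot\frac{3}{2}\log N = O(\log N / N)\to 0$, the constant term gives $\frac{2\pi}{N}\log C(p)\to 0$, and since $\log\bigl(1 + O(1/N)\bigr) = O(1/N)$ the last term is $O(1/N^2)\to 0$. Hence $\lim_{N\to\infty}\frac{2\pi}{N}\log J_N(\mathcal{K}_p; e^{2\pi\sqrt{-1}/N}) = 2\pi\zeta(p)$. Applying the Lemma then identifies this limit with $vol(S^3\setminus\mathcal{K}_p) + \sqrt{-1}\,cs(S^3\setminus\mathcal{K}_p)$ modulo $\pi^2\sqrt{-1}\mathbb{Z}$, which is exactly the claimed statement.

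The one point that requires genuine care, and which I expect to be the main subtlety rather than a routine estimate, is the choice of branch of the complex logarithm. Because $\zeta(p)$ has a nonzero imaginary part, the argument of $J_N$ winds around the origin a number of times growing linearly in $N$; the principal branch would fold this winding back into $(-\pi,\pi]$ and thereby destroy the Chern--Simons contribution. One must therefore take the logarithm to be the analytic continuation determined by the expansion formula, equivalently, track the continuous argument consistent with the factor $e^{N\zeta(p)}$. This is precisely why the identity can only hold modulo $\pi^2\sqrt{-1}\mathbb{Z}$: the residual ambiguity in the imaginary part matches the known indeterminacy of the Chern--Simons invariant. Once the branch is fixed in this way, no further input beyond Theorem \ref{theorem-main2} and the Lemma is needed.
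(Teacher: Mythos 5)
Your proposal is correct and is essentially the paper's own argument: the paper derives the corollary directly from Theorem \ref{theorem-main2} by observing that $\frac{2\pi}{N}\log J_N$ is dominated by $2\pi\zeta(p)$ while the prefactor, the $N^{3/2}$ term, and the $(1+O(1/N))$ correction all vanish in the limit, and then invokes Lemma 1.3 to identify $2\pi\zeta(p)$ with the complexified volume. Your additional remark about choosing the branch of the logarithm consistently with the factor $e^{N\zeta(p)}$ is a sensible clarification of a point the paper leaves implicit.
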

Hence we prove Kashaev-Murakami-Murakami Volume Conjecture for twist knot $\mathcal{K}_p$ with  $p\geq 6$.

\begin{remark}
    We need the condition $p\geq 6$ in the above two theorems since we use the same method from previous work \cite{CZ23-1}. We remark that this method can also work for the cases of $p\leq -1$ with some exceptions.     
\end{remark}

The rest of this article is organized as follows. In Section \ref{Section-Prelim}, we fix the notations and review the related materials that will be used in this paper. In Section \ref{Section-potential}, we compute the potential function for the colored Jones polynomials of the twist knot $\mathcal{K}_p$ at the root of unity $e^{\frac{2\pi\sqrt{-1}}{N+\frac{1}{M}}}$. In Section  \ref{Section-Poisson}, we prove Proposition \ref{prop-fouriercoeff}  which expresses the colored Jones polynomial of the twist knot 
$J_N(\mathcal{K}_p;e^{\frac{2\pi\sqrt{-1}}{N+\frac{1}{M}}})$ as a summation of Fourier coefficients by Poisson summation formula.
Section \ref{Section-Asym1} is devoted to the study of the asymptotic expansion of the colored Jones polynomial $J_N(\mathcal{K}_p;e^{\frac{2\pi\sqrt{-1}}{N+\frac{1}{M}}})$ of the twist knot $\mathcal{K}_p$ 
by using the results directly from \cite{CZ23-1}. In Section \ref{Section-Asym2}, we obtain the asymptotic expansion of the colored Jones polynomial $J_N(\mathcal{K}_p;e^{\frac{2\pi\sqrt{-1}}{N}})$ of the twist knot $\mathcal{K}_p$
 by taking the limit $M\rightarrow +\infty$.

\textbf{Acknowledgements.} 

The first author would like to thank Nicolai Reshetikhin, Kefeng Liu and Weiping Zhang for bringing him to this area and a lot of discussions during his career, thank Francis Bonahon,   Giovanni Felder and Shing-Tung Yau for their continuous encouragement, support and discussions, and thank Jun Murakami and Tomotada Ohtsuki for their helpful discussions and support. He also want to thank Jørgen Ellegaard Andersen, Sergei Gukov, Thang Le, Gregor Masbaum,  Rinat Kashaev, Vladimir Turaev and Hiraku Nakajima for their support, discussions and interests, and thank Yunlong Yao who built him solid analysis foundation twenty years ago.

The second author would like to thank Kefeng Liu and Hao Xu for bringing him to this area when he was a graduate student at CMS of Zhejiang University, and for their constant encouragement and helpful discussions since then.

\section{Preliminaries} \label{Section-Prelim}
\subsection{Colored Jones polynomials of the twist knot $\mathcal{K}_p$}

We consider the twist knot $\mathcal{K}_p$ illustrated in Figure 1,
\begin{figure}[!htb] \label{figure1} 
\begin{align} 
\raisebox{-15pt}{
\includegraphics[width=120 pt]{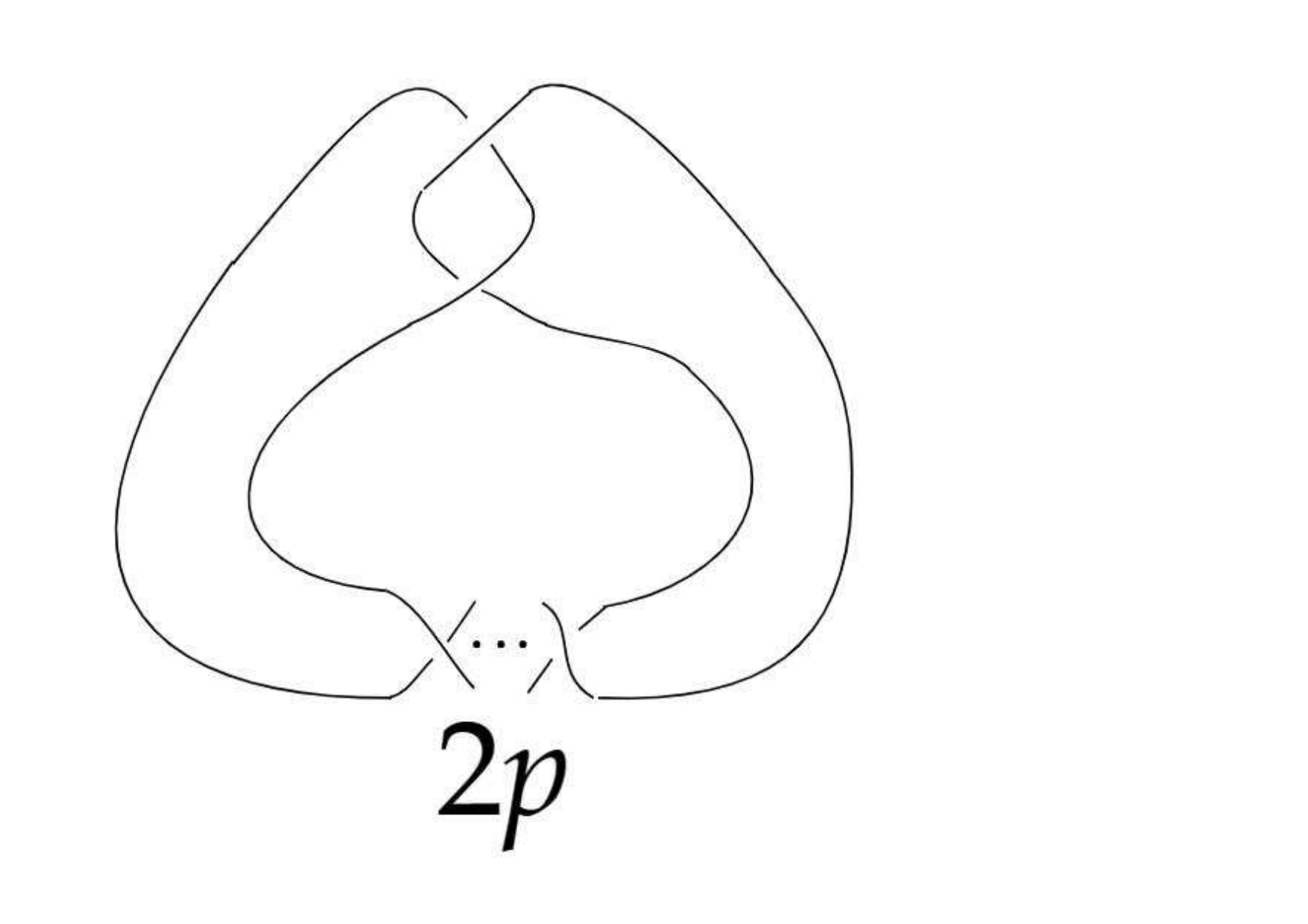}}.
\end{align}
\caption{Twist knot $\mathcal{K}_p$}
\end{figure}
where the index $2p$ represents $2p$ crossings (half-twists). For example, $\mathcal{K}_{-1}=4_1$, $\mathcal{K}_1=3_1$, $\mathcal{K}_2=5_2$.

We will use the following formula for 
the {\em normalized $N$-colored Jones polynomial} of the twist knot $\mathcal{K}_p$ given by K. Habiro and G. Masbaum in \cite{Hab08,Mas03}
\begin{align} \label{formula-coloredJonestwist}
J_N(\mathcal{K}_{p};q)=\sum_{k=0}^{N-1}\sum_{l=0}^{k}(-1)^lq^{\frac{k(k+3)}{4}+pl(l+1)}\frac{\{k\}!\{2l+1\}}{\{k+l+1\}!\{k-l\}!}\prod_{i=1}^k(\{N+i\}
\{N-i\}),
\end{align}
where 
\begin{align}
    \{n\}=q^{\frac{n}{2}}-q^{-\frac{n}{2}}, \ \ \text{for a positive integer $n$}. 
\end{align}

\subsection{Dilogarithm and Lobachevsky functions}
Let $\log: \mathbb{C}\setminus (-\infty,0]\rightarrow \mathbb{C}$ be the standard logarithm function defined by 
\begin{align}
    \log z=\log |z|+\sqrt{-1}\arg z
\end{align}
with $-\pi <\arg z<\pi$. 

The dilogarithm function $\text{Li}_2: \mathbb{C}\setminus (1,\infty)\rightarrow \mathbb{C}$ is defined by 
\begin{align}
    \text{Li}_2(z)=-\int_0^{z}\frac{\log(1-x)}{x}dx
\end{align}
where the integral is along any path in $\mathbb{C}\setminus (1,\infty)$ connecting $0$ and $z$, which is holomorphic in $\mathbb{C}\setminus [1,\infty)$ and continuous in $\mathbb{C}\setminus (1,\infty)$. 

The dilogarithm function satisfies the following properties 
\begin{align}
    \text{Li}_2\left(\frac{1}{z}\right)=-\text{Li}_2(z)-\frac{\pi^2}{6}-\frac{1}{2}(\log(-z) )^2.
\end{align}
In the unit disk $\{z\in \mathbb{C}| |z|<1\}$,  $\text{Li}_2(z)=\sum_{n=1}^{\infty}\frac{z^n}{n^2}$, and on the unit circle 
\begin{align}
 \{z=e^{2\pi \sqrt{-1}t}|0 \leq t\leq 1\},    
\end{align}
we have
\begin{align}
    \text{Li}_2(e^{2\pi\sqrt{-1} t})=\frac{\pi^2}{6}+\pi^2t(t-1)+2\pi \sqrt{-1}\Lambda(t)
\end{align}
where 
\begin{align} \label{formula-Lambda(t)}
\Lambda(t)=\text{Re}\left(\frac{\text{Li}_2(e^{2\pi \sqrt{-1}t})}{2\pi \sqrt{-1}}\right)=-\int_{0}^{t}\log|2 \sin \pi t|d t 
\end{align}
for $t\in \mathbb{R}$. The function $\Lambda(t)$ is an odd function which has period $1$ and satisfies 
$
\Lambda(1)=\Lambda(\frac{1}{2})=0.
$

\subsection{Quantum dilogrithm functions}
Given two positive integers $N$ and $M$ with $M\geq 2$, we set
$$\xi_{N,\frac{1}{M}}=e^{\frac{2\pi\sqrt{-1}}{N+\frac{1}{M}}}.$$

  We introduce the holomorphic function $\varphi_{N,\frac{1}{M}}(t)$ for $\{t\in
\mathbb{C}| 0< \text{Re} t < 1\}$, by the following integral
\begin{align}
\varphi_{N,\frac{1}{M}}(t)=\int_{-\infty}^{+\infty}\frac{e^{(2t-1)x}dx}{4x \sinh x
\sinh\frac{x}{N+\frac{1}{M}}}.
\end{align}
Noting that this integrand has poles at $n\pi \sqrt{-1} (n\in
\mathbb{Z})$, where, to avoid the poles at $0$, we choose the
following contour of the integral
\begin{align}
\gamma=(-\infty,-1]\cup \{z\in \mathbb{C}||z|=1, \text{Im} z\geq 0\}
\cup [1,\infty).
\end{align}

\begin{lemma} \label{lemma-varphixi}
The function $\varphi_{N,\frac{1}{M}}(t)$ satisfies
\begin{align}
    (\xi_{N,\frac{1}{M}})_n=\exp \left(\varphi_{N,\frac{1}{M}}\left(\frac{1}{2\left(N+\frac{1}{M}\right)}\right)-\varphi_{N,\frac{1}{M}}\left(\frac{2n+1}{2\left(N+\frac{1}{M}\right)}\right)\right)
    \end{align}
for  $0\leq n\leq N$, and
    \begin{align}
        (\xi_{N,\frac{1}{M}})_n
        &=\exp \left(\varphi_{N,\frac{1}{M}}\left(\frac{1}{2\left(N+\frac{1}{M}\right)}\right)-\varphi_{N,\frac{1}{M}}\left(\frac{2n+1}{2\left(N+\frac{1}{M}\right)}-1\right)\right.\\\nonumber
        &\left.+\log \left(1-e^{-\frac{2\pi\sqrt{-1}}{M}}\right)\right) \ \text{ for $ N< n\leq 2N$}.
    \end{align}

\end{lemma}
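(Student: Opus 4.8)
The plan is to reduce the identity to the Pochhammer recursion $(\xi_{N,\frac{1}{M}})_n=(\xi_{N,\frac{1}{M}})_{n-1}\,(1-\xi_{N,\frac{1}{M}}^{\,n})$ and to realize each factor $\log\!\left(1-\xi_{N,\frac{1}{M}}^{\,n}\right)$ as a difference of two values of $\varphi_{N,\frac{1}{M}}$, so that both claimed formulas follow by telescoping. Throughout write $h=N+\frac{1}{M}$, so that $\xi_{N,\frac{1}{M}}=e^{2\pi\sqrt{-1}/h}$ and $(\xi_{N,\frac{1}{M}})_n=\prod_{j=1}^n\left(1-\xi_{N,\frac{1}{M}}^{\,j}\right)$.

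First I would establish the fundamental difference identity. Using $1-e^{2x/h}=-2e^{x/h}\sinh\frac{x}{h}$ to cancel the factor $\sinh\frac{x}{h}$ in the denominator, one finds
\begin{align}
\varphi_{N,\frac{1}{M}}(t)-\varphi_{N,\frac{1}{M}}\!\left(t+\tfrac1h\right)=-\int_\gamma \frac{e^{(2t-1+\frac1h)x}}{2x\sinh x}\,dx.
\end{align}
The heart of the matter is then the evaluation
\begin{align}
I(a):=\int_\gamma \frac{e^{ax}}{2x\sinh x}\,dx=-\log\!\left(1-e^{\pi\sqrt{-1}(a+1)}\right),\qquad -1<a<1,
\end{align}
which I would prove by differentiating under the integral sign: $I'(a)=\frac12\int_\gamma \frac{e^{ax}}{\sinh x}\,dx$, and this last integral is computed as a principal value plus the half-residue at $x=0$ picked up by the contour $\gamma$, giving $\frac{\pi}{2}\tan\frac{\pi a}{2}-\frac{\pi\sqrt{-1}}{2}$, which matches the derivative of the right-hand side; the additive constant is then fixed by a single direct check. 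Setting $t=\frac{2j-1}{2h}$ makes the exponent parameter $a=\frac{2j}{h}-1$, so that $e^{\pi\sqrt{-1}(a+1)}=\xi_{N,\frac{1}{M}}^{\,j}$ and the difference identity reads $\varphi_{N,\frac{1}{M}}(\frac{2j-1}{2h})-\varphi_{N,\frac{1}{M}}(\frac{2j+1}{2h})=\log\!\left(1-\xi_{N,\frac{1}{M}}^{\,j}\right)$.

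For $0\leq n\leq N$ every index $1\leq j\leq n$ satisfies $a=\frac{2j}{h}-1\in(-1,1)$, so the integrals converge and telescoping over $j$ gives the first formula. For $N<n\leq 2N$ the new factors $j=N+1,\dots,n$ have $a=\frac{2j}{h}-1>1$, where $I(a)$ diverges; here I would instead use the shifted identity $\varphi_{N,\frac{1}{M}}(\frac{2j-1}{2h}-1)-\varphi_{N,\frac{1}{M}}(\frac{2j+1}{2h}-1)=\log\!\left(1-\xi_{N,\frac{1}{M}}^{\,j}\right)$, whose exponent parameter is $\frac{2j}{h}-3$ and lies in $(-1,1)$ precisely for $N+1\leq j\leq 2N$ (this is where the range of $n$ and the hypothesis $M\geq 2$ enter). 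Telescoping these shifted differences over $j=N+1,\dots,n$ and adding the already-established value of $\log(\xi_{N,\frac{1}{M}})_N$ leaves exactly the discrepancy $\varphi_{N,\frac{1}{M}}(\frac{2N+1}{2h}-1)-\varphi_{N,\frac{1}{M}}(\frac{2N+1}{2h})$.

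Finally I would identify this discrepancy with the correction term. A second difference identity, obtained by cancelling $\sinh x$ instead (via $1-e^{-2x}=2e^{-x}\sinh x$), gives $\varphi_{N,\frac{1}{M}}(u)-\varphi_{N,\frac{1}{M}}(u-1)=\int_\gamma \frac{e^{(2u-2)x}}{2x\sinh\frac{x}{h}}\,dx$, and the substitution $x\mapsto hx$ turns the right-hand side into $I\!\left(h(2u-2)\right)$. At $u=\frac{2N+1}{2h}$ the parameter is $h(2u-2)=1-\frac{2}{M}\in[0,1)$, so this evaluates to $-\log\!\left(1-e^{-2\pi\sqrt{-1}/M}\right)$; hence the discrepancy equals $\log\!\left(1-e^{-2\pi\sqrt{-1}/M}\right)$, which is exactly the extra term in the second formula. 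The main obstacle is the contour evaluation of $I(a)$ together with the careful range bookkeeping in the second case: one must choose the shift-by-one (rather than the naive unshifted difference) so that every integral in the telescoping converges, and verify that the leftover boundary terms collapse to the single residue-type contribution $\log\!\left(1-e^{-2\pi\sqrt{-1}/M}\right)$ coming from the $\sinh\frac{x}{h}$ pole.
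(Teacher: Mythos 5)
The paper does not prove this lemma itself --- it defers to the literature (Ohtsuki, Chen--Murakami, Wong--Yang) --- and your argument is exactly the standard proof found there: telescoping the recursion $(\xi_{N,\frac{1}{M}})_n=(\xi_{N,\frac{1}{M}})_{n-1}(1-\xi_{N,\frac{1}{M}}^{\,n})$ against the two difference identities for $\varphi_{N,\frac{1}{M}}$, with the key integral $\int_\gamma \frac{e^{ax}}{2x\sinh x}\,dx=-\log\bigl(1+e^{\pi\sqrt{-1}a}\bigr)$ evaluated by the half-residue at $x=0$; I checked the parameter bookkeeping in both ranges and the identification of the discrepancy with $\log\bigl(1-e^{-2\pi\sqrt{-1}/M}\bigr)$ at $a=1-\frac{2}{M}$, and it is all correct. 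The only point worth making explicit is that some evaluation points (e.g.\ $\frac{2N+1}{2(N+\frac{1}{M})}\geq 1$ for $M\geq 2$, and $\frac{2N+1}{2(N+\frac{1}{M})}-1=0$ when $M=2$) fall outside the strip $0<\mathrm{Re}\,t<1$ on which the paper states $\varphi_{N,\frac{1}{M}}$ is defined, so you should note that the defining integral actually converges on the slightly larger strip $-\frac{1}{2(N+\frac{1}{M})}<\mathrm{Re}\,t<1+\frac{1}{2(N+\frac{1}{M})}$, which contains all the points you use.
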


\begin{lemma} \label{lemma-varphixi2}
    We have the following identities:
\begin{align}
    &\varphi_{N,\frac{1}{M}}(t)+\varphi_{N,\frac{1}{M}}(1-t)\\\nonumber
    &=2\pi \sqrt{-1}\left(-\frac{(N+\frac{1}{M})}{2}(t^2-t+\frac{1}{6})+\frac{1}{24(N+\frac{1}{M})}\right),\\ 
    &\varphi_{N,\frac{1}{M}}\left(\frac{1}{2\left(N+\frac{1}{M}\right)}\right)\\\nonumber
    &=\frac{(N+\frac{1}{M})}{2\pi\sqrt{-1}}\frac{\pi^2}{6}+\frac{1}{2}\log \left(N+\frac{1}{M}\right)+\frac{\pi \sqrt{-1}}{4}-\frac{\pi \sqrt{-1}}{12(N+\frac{1}{M})},\\
    &\varphi_{N,\frac{1}{M}}\left(1-\frac{1}{2\left(N+\frac{1}{M}\right)}\right)\\\nonumber
    &=\frac{\left(N+\frac{1}{M}\right)}{2\pi\sqrt{-1}}\frac{\pi^2}{6}-\frac{1}{2}\log \left(N+\frac{1}{M}\right)+\frac{\pi \sqrt{-1}}{4}-\frac{\pi \sqrt{-1}}{12\left(N+\frac{1}{M}\right)}.
\end{align}
\end{lemma}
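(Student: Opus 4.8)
The plan is to derive all three identities from residue calculus applied to the defining contour integral over $\gamma$, exploiting the parity of the integrand in each case. Throughout I write $\nu=N+\frac{1}{M}$ for brevity, and I record that the two special points appearing in the second and third identities are interchanged by the substitution $t\mapsto 1-t$, namely $1-\frac{1}{2\nu}=1-t_0$ with $t_0=\frac{1}{2\nu}$; this is what lets me feed the first identity into the proofs of the other two.

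\textbf{The reciprocity identity.} First I would add the integrands defining $\varphi_{N,\frac1M}(t)$ and $\varphi_{N,\frac1M}(1-t)$. Replacing $t$ by $1-t$ sends $e^{(2t-1)x}$ to $e^{-(2t-1)x}$, so the sum of integrands is
\begin{align}
f(x)=\frac{\cosh((2t-1)x)}{2x\,\sinh x\,\sinh\frac{x}{\nu}},
\end{align}
which is an \emph{odd} function of $x$ (even numerator over odd denominator). Consequently the two real-axis rays of $\gamma$ cancel in pairs and the principal value over $\mathbb{R}$ vanishes, so that $\int_\gamma f$ reduces to the contribution of the small upper semicircular detour around the origin. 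Since $\gamma$ passes \emph{over} the origin (clockwise), shrinking the detour gives $\int_\gamma f=-\pi\sqrt{-1}\,\mathrm{Res}_{x=0}f$; here the order-three pole contributes nothing beyond its residue, because the $x^{-3}$ term has an even antiderivative whose endpoint values cancel. A short Laurent expansion, using $\sinh u=u(1+\tfrac{u^2}{6}+\cdots)$ and $\cosh((2t-1)x)=1+\tfrac{(2t-1)^2}{2}x^2+\cdots$, gives
\begin{align}
\mathrm{Res}_{x=0}f=\nu\Big(t^2-t+\tfrac16\Big)-\frac{1}{12\nu},
\end{align}
and multiplying by $-\pi\sqrt{-1}$ yields exactly the claimed first identity.

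\textbf{The two special values.} Evaluating the first identity at $t=t_0=\frac{1}{2\nu}$ immediately produces the \emph{sum} $\varphi_{N,\frac1M}(t_0)+\varphi_{N,\frac1M}(1-t_0)$, and one checks it matches the sum of the two claimed right-hand sides (the $\tfrac12\log\nu$ terms cancel). It therefore remains to compute the \emph{difference}
\begin{align}
\varphi_{N,\frac1M}\Big(1-\tfrac{1}{2\nu}\Big)-\varphi_{N,\frac1M}\Big(\tfrac{1}{2\nu}\Big)=\int_\gamma h(x)\,dx,\qquad h(x)=\frac{\sinh\big((1-\tfrac1\nu)x\big)}{2x\,\sinh x\,\sinh\frac{x}{\nu}},
\end{align}
whose integrand is now \emph{even}. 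For this I would close $\gamma$ by a large semicircle in the upper half-plane (legitimate because $h$ decays like $e^{-2|x|/\nu}/x$ along the real axis) and collect residues at the two families of poles $x=k\pi\sqrt{-1}$ and $x=k\nu\pi\sqrt{-1}$, $k\ge1$; the origin is not enclosed. A direct computation gives the clean values $\mathrm{Res}_{x=k\pi\sqrt{-1}}h=\frac{\sqrt{-1}}{2k\pi}$ and $\mathrm{Res}_{x=k\nu\pi\sqrt{-1}}h=-\frac{\sqrt{-1}}{2k\pi}$. Summing over the poles enclosed by a radius-$R$ arc and letting $R\to\infty$, the two families contribute partial harmonic sums truncated at the different spacings $\pi$ and $\nu\pi$, whose difference tends to $\log\nu$; hence the total residue sum tends to $\frac{\sqrt{-1}}{2\pi}\log\nu$ and, by the residue theorem, $\int_\gamma h=2\pi\sqrt{-1}\cdot\frac{\sqrt{-1}}{2\pi}\log\nu=-\log\nu$. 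Solving the resulting $2\times2$ linear system for the sum and the difference then isolates each of $\varphi_{N,\frac1M}(\frac{1}{2\nu})$ and $\varphi_{N,\frac1M}(1-\frac{1}{2\nu})$, reproducing the second and third identities.

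\textbf{Main obstacle.} The delicate step is the evaluation of the difference integral. The two residue families are \emph{individually} divergent (each summing to $\pm\frac{\sqrt{-1}}{2\pi}\sum_k\frac1k$), and the finite answer $\log\nu$ emerges only as the limit of a difference of partial harmonic sums truncated at the two pole spacings $\pi$ and $\nu\pi$; making this precise requires choosing the arc radii $R$ to lie strictly between consecutive poles and bounding the arc integral despite the oscillatory behaviour of $\sinh$ on the imaginary axis. This is exactly the computation carried out for $M=2$ in \cite{CZ23-1}, and I would adapt that argument essentially verbatim with $\frac12$ replaced by $\frac1M$.
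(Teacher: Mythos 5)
Your derivation is correct, and it is essentially the standard argument: the paper itself gives no proof of this lemma but defers to the cited literature (Ohtsuki, Chen--Murakami, Wong--Yang), where exactly this residue-calculus scheme is carried out. I checked your computations: the parity argument and the Laurent expansion at $0$ give $\mathrm{Res}_{x=0}f=\nu(t^2-t+\tfrac16)-\tfrac{1}{12\nu}$ and hence the reciprocity identity; the residues $\pm\frac{\sqrt{-1}}{2k\pi}$ at $k\pi\sqrt{-1}$ and $k\nu\pi\sqrt{-1}$ are right; and the resulting sum/difference system reproduces the two special values. One point you should not gloss over: since $\nu=N+\tfrac1M=\frac{NM+1}{M}$ is rational, the two pole families of $h$ coincide at $x=j(NM+1)\pi\sqrt{-1}$, where the denominator has a double zero. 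This turns out to be harmless because the numerator $\sinh\bigl((1-\tfrac1\nu)x\bigr)$ also vanishes there, so the pole stays simple, and a short computation shows its residue equals $\frac{\sqrt{-1}}{2\pi}\bigl(\frac{1}{j(NM+1)}-\frac{1}{jM}\bigr)$, i.e.\ exactly the sum of the two naive single-family contributions; your harmonic-sum bookkeeping therefore goes through unchanged, but the case needs to be stated. The remaining obstacle you flag (choosing arc radii between consecutive poles and bounding the arc integral near the imaginary axis, where the decay $e^{-2R|\cos\theta|/\nu}$ degenerates) is indeed the only delicate analytic step, and it is handled in the cited sources in the same way.
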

The function $\varphi_{N,\frac{1}{M}}(t)$ is closely related to the dilogarithm function as follows.
\begin{lemma} \label{lemma-varphixi3}
    (1)For every $t$ with $0<Re t<1$, 
    \begin{align}
        \varphi_{N,\frac{1}{M}}(t)=\frac{(N+\frac{1}{M})}{2\pi \sqrt{-1}}\text{Li}_2(e^{2\pi\sqrt{-1}t})
 -\frac{\pi \sqrt{-1}e^{2\pi\sqrt{-1}t}}{12(1-e^{2\pi\sqrt{-1}t})}\frac{1}{N+\frac{1}{M}}+O\left(\frac{1}{(N+\frac{1}{M})^3}\right).
    \end{align}
    (2) For every $t$ with $0<Re t<1$, 
    \begin{align}
        \varphi_{N,\frac{1}{M}}'(t)=-\left(N+\frac{1}{M}\right)\log(1-e^{2\pi\sqrt{-1}t})+O\left(\frac{1}{\left(N+\frac{1}{M}\right)}\right)
    \end{align}
    (3) As $N\rightarrow \infty$, $\frac{1}{\left(N+\frac{1}{M}\right)}\varphi_{N,\frac{1}{M}}(t)$ uniformly converges to $\frac{1}{2\pi\sqrt{-1}}\text{Li}_2(e^{2\pi\sqrt{-1}t})$ and $\frac{1}{\left(N+\frac{1}{M}\right)}\varphi'_{N,M}(t)$ uniformly converges to $-\log(1-e^{2\pi\sqrt{-1}t})$ on any compact subset of $\{t\in \mathbb{C}|0<Re t<1\}$. 
\end{lemma}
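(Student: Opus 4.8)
The plan is to treat all three parts by the same device: the only $N$-dependence of the integrand sits in the factor $1/\sinh\frac{x}{N+\frac1M}$, so writing $h:=N+\frac1M$ and using the classical expansion
\[
\frac{u}{\sinh u}=1-\frac{u^2}{6}+\frac{7u^4}{360}-\cdots=\sum_{k\ge 0}\frac{(2-2^{2k})B_{2k}}{(2k)!}u^{2k}\qquad(|u|<\pi),
\]
one obtains, with $u=x/h$,
\[
\frac{1}{\sinh\frac{x}{h}}=\frac{h}{x}\left(1-\frac{x^2}{6h^2}+\frac{7x^4}{360h^4}-\cdots\right).
\]
Substituting this into the defining integral formally expresses $\varphi_{N,\frac1M}(t)$ as a power series in $1/h$ whose coefficients are contour integrals of the $N$-independent kernels $\frac{e^{(2t-1)x}}{x^2\sinh x}$, $\frac{e^{(2t-1)x}}{\sinh x}$, and so on, each of which I would evaluate by residues.

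For part (2) I would first differentiate under the integral sign to get $\varphi_{N,\frac1M}'(t)=\int_\gamma\frac{e^{(2t-1)x}}{2\sinh x\,\sinh\frac{x}{h}}\,dx$, and then insert the expansion above, so that the leading contribution is $\frac{h}{2}\int_\gamma\frac{e^{(2t-1)x}}{x\sinh x}\,dx$. Because $0<\mathrm{Re}\,t<1$ forces $e^{(2t-1)x}$ to be dominated by the growth of $\sinh x$ in every direction of the upper half-plane, I may close the contour upward and collect the residues at the simple poles $x=n\pi\sqrt{-1}$, $n\ge 1$ (the double pole at the origin lies below $\gamma$ and is not enclosed). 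Using $\sinh x\approx(-1)^n(x-n\pi\sqrt{-1})$, the residue sum equals $\frac{1}{\pi\sqrt{-1}}\sum_{n\ge1}\frac{(e^{2\pi\sqrt{-1}t})^n}{n}=-\frac{1}{\pi\sqrt{-1}}\log(1-e^{2\pi\sqrt{-1}t})$, and multiplying by $2\pi\sqrt{-1}$ and by the prefactor $\frac{h}{2}$ yields $-h\log(1-e^{2\pi\sqrt{-1}t})$; the next term of the expansion is $O(1/h)$.

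For part (1) I would run the same computation on $\varphi$ itself: the leading term $\frac{h}{4}\int_\gamma\frac{e^{(2t-1)x}}{x^2\sinh x}\,dx$ has residues $\frac{(e^{2\pi\sqrt{-1}t})^n}{-n^2\pi^2}$ summing to $-\frac{1}{\pi^2}\mathrm{Li}_2(e^{2\pi\sqrt{-1}t})$, which reproduces $\frac{h}{2\pi\sqrt{-1}}\mathrm{Li}_2(e^{2\pi\sqrt{-1}t})$, while the $\frac{1}{h}$-term $-\frac{1}{24h}\int_\gamma\frac{e^{(2t-1)x}}{\sinh x}\,dx$ has residue sum $\sum_{n\ge1}(e^{2\pi\sqrt{-1}t})^n=\frac{e^{2\pi\sqrt{-1}t}}{1-e^{2\pi\sqrt{-1}t}}$, giving exactly $-\frac{\pi\sqrt{-1}e^{2\pi\sqrt{-1}t}}{12(1-e^{2\pi\sqrt{-1}t})}\frac1h$; equivalently one can integrate the expansion of part (2) in $t$ and fix the constant of integration using the explicit evaluations in Lemma \ref{lemma-varphixi2}. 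Part (3) is then immediate: dividing the formulas of (1) and (2) by $h$ leaves the dilogarithm (resp.\ logarithm) term plus an $O(1/h^2)$ (resp.\ $O(1/h)$) error, and since the $O$-constants can be taken uniform over compact subsets of $\{0<\mathrm{Re}\,t<1\}$, the stated uniform convergence follows.

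The main obstacle is the rigorous control of the error terms. The expansion of $u/\sinh u$ converges only for $|u|<\pi$, whereas the integration variable ranges over the whole contour, so I cannot simply expand under the integral. I would split $\gamma$ into the part with $|x|\le \varepsilon h$, where the truncated expansion is valid and the tail of the series is controlled by the remainder estimate for $u/\sinh u$, and the part with $|x|>\varepsilon h$, where the rapid decay coming from $0<\mathrm{Re}\,t<1$ makes the contribution exponentially small in $h$. Matching these two regions to extract a genuine $O(1/h^3)$ (resp.\ $O(1/h)$) bound, and checking that the residue-theorem contour closing is justified uniformly for $t$ in a compact set, is the delicate analytic step; the residue bookkeeping itself is routine once this estimate is in place.
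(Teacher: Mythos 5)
The paper does not actually prove this lemma; it defers to the cited references (Ohtsuki, Chen--Murakami, Wong--Yang), and the argument in those sources is essentially the one you sketch: expand $1/\sinh(x/h)$ with $h=N+\frac1M$ using $u/\sinh u=1-\frac{u^2}{6}+\cdots$, evaluate the resulting $N$-independent contour integrals by residues at $x=n\pi\sqrt{-1}$, and control the error by splitting $\gamma$ at $|x|\sim\varepsilon h$. Your residue bookkeeping is correct (including the sign conventions, the observation that only odd powers of $1/h$ survive so the error in (1) is $O(h^{-3})$ rather than $O(h^{-2})$, and the identification of the $\frac{1}{h}$-coefficient with $-\frac{\pi\sqrt{-1}}{12}\frac{e^{2\pi\sqrt{-1}t}}{1-e^{2\pi\sqrt{-1}t}}$), and you correctly flag the $|u|<\pi$ radius-of-convergence issue as the main analytic chore.

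One step is wrong as written, though easily repaired. You claim that $0<\mathrm{Re}\,t<1$ forces $e^{(2t-1)x}$ to be dominated by $\sinh x$ ``in every direction of the upper half-plane.'' Along the positive imaginary axis $x=\sqrt{-1}y$ one has $|e^{(2t-1)x}|=e^{-2y\,\mathrm{Im}\,t}$ while $\sinh x$ stays bounded, so the contour can be closed upward only when $\mathrm{Im}\,t>0$; consistently, the residue series $\sum_{n\ge1}e^{2\pi\sqrt{-1}tn}/n^2$, $\sum_{n\ge1}e^{2\pi\sqrt{-1}tn}/n$, $\sum_{n\ge1}e^{2\pi\sqrt{-1}tn}$ all diverge for $\mathrm{Im}\,t<0$. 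Since the lemma is asserted for all complex $t$ in the strip (and is used in the paper on complex neighborhoods of the critical point), you must either (i) prove the identities for $\mathrm{Im}\,t>0$ and extend to the whole strip by analytic continuation, noting that both sides are holomorphic there because $e^{2\pi\sqrt{-1}t}\notin[1,\infty)$ when $0<\mathrm{Re}\,t<1$, and that the $O$-estimates propagate to compact subsets by a Cauchy-integral argument in $t$; or (ii) for $\mathrm{Im}\,t<0$ close the contour downward, which additionally picks up the (higher-order) pole at $x=0$ lying below $\gamma$ and reproduces the inversion relation for $\mathrm{Li}_2$. With that repair, and the two-region error estimate you already describe, the proof is complete and matches the standard one.
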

See the literature, such as \cite{Oht16,CJ17,WongYang20-1} for the proof of Lemma \ref{lemma-varphixi}, \ref{lemma-varphixi2}, \ref{lemma-varphixi3}.

 \subsection{Saddle point method}
We need to use the following version of saddle point method as shown in \cite{Oht18}.
\begin{proposition}[\cite{Oht18}, Proposition 3.1] \label{proposition-saddlemethod}
   Let $A$ be a non-singular symmetric complex $2\times 2$ matrix, and let $\Psi(z_1,z_2)$ and $r(z_1,z_2)$ be holomorphic functions of the forms, 
   \begin{align}
    \Psi(z_1,z_2)&=\mathbf{z}^{T}A\mathbf{z}+r(z_1,z_2), \\\nonumber
    r(z_1,z_2)&=\sum_{i,j,k}b_{ijk}z_iz_jz_k+\sum_{i,j,k,l}c_{ijkl}z_iz_jz_kz_l+\cdots
   \end{align}
   defined in a neighborhood of $\mathbf{0}\in \mathbb{C}$. The restriction of the domain 
   \begin{align} \label{formula-domain0}
       \{(z_1,z_2)\in \mathbb{C}^2| \text{Re}\Psi(z_1,z_2)<0\}  
   \end{align}
   to a neighborhood of $\mathbf{0}\in \mathbb{C}^2$ is homotopy equivalent to $S^1$. Let $D$ be an oriented disk embeded in $\mathbb{C}^2$ such that $\partial D$ is included in the domain (\ref{formula-domain0}) whose inclusion is homotopic to a homotopy equivalence to the above $S^1$ in the domain (\ref{formula-domain0}). Then we have the following asymptotic expansion
\begin{align}
    \int_{D}e^{N\psi(z_1,z_2)}dz_1dz_2=\frac{\pi}{N\sqrt{\det(-A)}}\left(1+\sum_{i=1}^d\frac{\lambda_i}{N^i}+O(\frac{1}{N^{d+1}})\right),
\end{align}
   for any $d$, where we choose the sign of $\sqrt{\det{(-A)}}$ as explained in Proposition \cite{Oht16}, and $\lambda_i$'s are constants presented by using coefficients of the expansion $\Psi(z_1,z_2)$, such presentations are obtained by formally expanding the following formula, 
\begin{align}
    1+\sum_{i=1}^{\infty}\frac{\lambda_i}{N^i}=\exp\left(Nr\left(\frac{\partial }{\partial w_1},\frac{\partial }{\partial w_2}\right)\right)\exp\left(-\frac{1}{4N}(w_1,w_2)A^{-1}\binom{w_1}{w_2}\right)|_{w_1=w_2=0}.
\end{align}
\end{proposition}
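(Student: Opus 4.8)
The plan is to regard this as a two-variable instance of the method of steepest descent (the complex Laplace method): the quadratic part $\mathbf{z}^{T}A\mathbf{z}$ produces the leading Gaussian asymptotics, while the higher-order remainder $r$ generates the subleading corrections $\lambda_i$. Since $\Psi$ has no linear term, $\mathbf{0}$ is a critical point of $\Psi$ and is the saddle point governing the integral; the holomorphy of $\Psi$ together with the stated topological hypothesis is exactly what is needed to reduce $\int_{D}e^{N\Psi}$ to a Gaussian integral centered at $\mathbf{0}$.

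First I would use Cauchy's theorem for the holomorphic integrand to deform the disk $D$ to a standard disk $D_0$ passing through $\mathbf{0}$ along the steepest-descent directions of the quadratic form $\mathbf{z}^{T}A\mathbf{z}$. The hypotheses that $\partial D$ lies in the domain $\{\mathrm{Re}\,\Psi<0\}$ and that its inclusion is homotopic, inside that domain, to a homotopy equivalence onto the $S^1$ guarantee that such a deformation exists and does not change the value of the integral, since $e^{N\Psi}\,dz_1\wedge dz_2$ is a closed holomorphic $2$-form and $D,D_0$ are homologous relative to the domain $\{\mathrm{Re}\,\Psi<0\}$ in which their boundaries are confined. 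This contour-deformation step is the geometric heart of the argument and encodes why the prescribed $S^1$-homotopy type appears in the hypotheses.

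Next I would split $\int_{D_0}=\int_{D_0\cap U}+\int_{D_0\setminus U}$ for a small ball $U$ around $\mathbf{0}$. On the deformed contour outside $U$ one has $\mathrm{Re}\,\Psi\le-\delta<0$, so $\int_{D_0\setminus U}e^{N\Psi}=O(e^{-\delta N})$ is exponentially small and absorbed into the error term. Inside $U$, after choosing the steepest-descent directions so that $-A$ has positive-definite real part along the contour, the extension of $\int_{D_0\cap U}$ to all of $\mathbb{C}^2$ costs only an exponentially small error, and the leading Gaussian integral evaluates to $\int_{\mathbb{C}^2}e^{N\mathbf{z}^{T}A\mathbf{z}}\,dz_1\,dz_2=\frac{\pi}{N\sqrt{\det(-A)}}$, with the branch of $\sqrt{\det(-A)}$ fixed by the steepest-descent directions exactly as in \cite{Oht16}.

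Finally, to obtain the full series I would factor out this Gaussian normalization and recognize the remaining factor as the Gaussian expectation $\langle e^{Nr(\mathbf{z})}\rangle$ with respect to the weight $e^{N\mathbf{z}^{T}A\mathbf{z}}$. Completing the square gives the moment-generating identity $\langle e^{\mathbf{w}^{T}\mathbf{z}}\rangle=\exp(-\frac{1}{4N}\mathbf{w}^{T}A^{-1}\mathbf{w})$, and inserting $e^{Nr(\mathbf{z})}=e^{Nr(\partial_{\mathbf{w}})}e^{\mathbf{w}^{T}\mathbf{z}}\big|_{\mathbf{w}=0}$ and pulling the constant-coefficient operator $e^{Nr(\partial_{\mathbf{w}})}$ and the evaluation $\big|_{\mathbf{w}=0}$ outside the integral yields precisely the stated generating function for the $\lambda_i$; this is the Wick/Feynman-diagram expansion in which $r$ supplies the vertices and $A^{-1}$ the propagator. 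A rescaling $\mathbf{z}=\mathbf{w}/\sqrt{N}$ shows that cubic vertices carry $N^{-1/2}$, quartic vertices $N^{-1}$, and so on, while the vanishing of odd-order Gaussian moments forces the half-integer powers of $1/N$ to cancel, leaving an expansion in integer powers of $1/N$. The main obstacle is analytic rigor rather than the algebra: one must justify the contour deformation purely from the homotopy hypothesis, and must upgrade the formal term-by-term integration to a genuine asymptotic expansion with uniform remainder $O(N^{-(d+1)})$, which requires Taylor-expanding $r$ to finite order with explicit remainder and bounding the resulting Gaussian integrals uniformly in $N$.
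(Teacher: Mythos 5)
The paper does not prove this proposition itself --- it is quoted verbatim from Ohtsuki's work and its proof is explicitly deferred to \cite{Oht16} --- and your sketch is a faithful outline of exactly that standard steepest-descent argument: contour deformation justified by the homotopy hypothesis, localization at the critical point $\mathbf{0}$, Gaussian evaluation giving $\pi/(N\sqrt{\det(-A)})$, and the Wick-type generating function for the $\lambda_i$. The only small imprecision is that deforming $D$ to $D_0$ changes the integral by an exponentially small amount (the integral over the homotopy cylinder lying in $\{(z_1,z_2)\mid \text{Re}\,\Psi(z_1,z_2)<0\}$) rather than leaving it exactly unchanged, but this is harmlessly absorbed into the $O(N^{-(d+1)})$ error term as your own error analysis already indicates.
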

See \cite{Oht16} for a proof of the Proposition \ref{proposition-saddlemethod},   

\begin{remark}[\cite{Oht18}, Remark 3.2] \label{remark-saddle}
    As mentioned in Remark 3.6 of \cite{Oht16}, we can extend Proposition \ref{proposition-saddlemethod} to the case where $\Psi(z_1,z_2)$ depends on $N$ in such a way that $\Psi(z_1,z_2)$ is of the form 
    \begin{align}        \Psi(z_1,z_2)=\Psi_0(z_1,z_2)+\Psi_1(z_1,z_2)\frac{1}{N}+R(z_1,z_2)\frac{1}{N^2}. 
    \end{align}
    where $\Psi_i(z_1,z_2)$'s are holomorphic functions independent of $N$, and we assume that $\Psi_0(z_1,z_2)$ satisfies the assumption of the Proposition and $|R(z_1,z_2)|$ is bounded by a constant which is independent of $N$.  
\end{remark}
\subsection{Conventions and Notations}
We list the following notations used in this paper and comparing to \cite{CZ23-1}. 

The roots of unity: 
\begin{itemize}
    \item[] $\xi_{N,\frac{1}{M}}=e^{\frac{2\pi\sqrt{-1}}{N+\frac{1}{M}}}$ ( so $\xi_{N,\frac{1}{2}}$ is equal to the notation $\xi_N$ used in \cite{CZ23-1});

    \item[] $\xi_{N,0}=e^{\frac{2\pi\sqrt{-1}}{N}}$.
\end{itemize}

The Faddeev functions:
\begin{itemize}
    \item[] $\varphi_{N,\frac{1}{M}}(t)\int_{-\infty}^{+\infty}\frac{e^{(2t-1)x}dx}{4x \sinh x
\sinh\frac{x}{N+\frac{1}{M}}}$ ( so $\varphi_{N,\frac{1}{2}}(t)$ is equal to the notation $\varphi_{N}(t)$ used in \cite{CZ23-1});

    \item[] $\varphi_{N,0}(t)=\int_{-\infty}^{+\infty}\frac{e^{(2t-1)x}dx}{4x \sinh x
\sinh\frac{x}{N}}$.
\end{itemize}

The Potential functions:
\begin{itemize}
    \item[] $V_{N\frac{1}{M}}(p,t,s)$ is given by formula (\ref{formula-potentialVNM})  ( so $V_{N,\frac{1}{2}}(p,t,s)$ is equal to the notation $V_{N}(p,t,s)$ used in \cite{CZ23-1});

    \item[] $V_{N,0}(p,t,s)$ is given by the formula (\ref{formula-VN0}). 
\end{itemize}

The Fourier coefficients:
\begin{itemize}
    \item[] $\hat{h}_{N,\frac{1}{M}}(m,n)$ is given by formula (\ref{formula-hathMN})  ( so $\hat{h}_{N,\frac{1}{2}}(m,n)$ is equal to the notation $\hat{h}_{N}(m,n)$ used in \cite{CZ23-1});

    \item[] $\tilde{h}_{N,\frac{1}{M}}(m,n)=(1-e^{\frac{2\pi\sqrt{-1}(n+1)}{M}})\hat{h}_{N,\frac{1}{M}}(m,n)$;

    \item[] 
    $\tilde{h}_{N,0}(m,n)$ is given by formula (\ref{formula-tildehN}). 
\end{itemize}

\section{Calculations of the potential function} \label{Section-potential}
This section is devoted to the calculations of the potential function for the colored 
Jones polynomial $J_{N}(\mathcal{K}_p;q)$ at the root of unity $\xi_{N,\frac{1}{M}}=e^{\frac{2\pi\sqrt{-1}}{N+\frac{1}{M}}}$. 

We introduce the following $q$-Pochhammer symbol 
\begin{align}
    (q)_n=\prod_{i=1}^{n}(1-q^i).
\end{align}
then we have
\begin{align}
\ \{n\}!=(-1)^nq^{\frac{-n(n+1)}{4}}(q)_n.
\end{align}

 From formula (\ref{formula-coloredJonestwist}),  we obtain
\begin{align}
J_{N}(\mathcal{K}_p;q)&=\sum_{k=0}^{N-1}\sum_{l=0}^k(-1)^{k+l}q^{pl(l+1)+\frac{l(l-1)}{2}-Nk+\frac{k(k+1)}{2}+k}\\\nonumber
&\cdot \frac{(1-q^{2l+1})}{(1-q^N)}\frac{(q)_k(q)_{N+k}}{(q)_{k+l+1}(q)_{k-l}(q)_{N-k-1}}. 
\end{align}
Hence, at the root of unity $\xi_{N,\frac{1}{M}}$,  we have 
\begin{align}
 J_{N}(\mathcal{K}_p;\xi_{N,\frac{1}{M}})&= \sum_{k=0}^{N-1}\sum_{l=0}^k\frac{(-1)^{k+l+1}\sin \frac{\pi(2l+1)}{N+\frac{1}{M}}}{\sin \frac{\frac{\pi}{M} }{N+\frac{1}{M}}}\\\nonumber
 &\cdot \xi_{N,\frac{1}{M}}^{(p+\frac{1}{2})l^2+(p+\frac{1}{2})l+\frac{k^2}{2}+2k+\frac{3}{4}}\frac{(\xi_{N,\frac{1}{M}})_k(\xi_{N,\frac{1}{M}})_{N+k}}{(\xi_{N,\frac{1}{M}})_{k+l+1}(\xi_{N,\frac{1}{M}})_{k-l}(\xi_{N,\frac{1}{M}})_{N-k-1}}. 
\end{align}

By using Lemma \ref{lemma-varphixi},  we obtain   

\begin{align}    &\frac{(\xi_{N,\frac{1}{M}})_k(\xi_{N,\frac{1}{M}})_{N+k}}{(\xi_{N,\frac{1}{M}})_{k+l+1}(\xi_{N,\frac{1}{M}})_{k-l}(\xi_{N,\frac{1}{M}})_{N-k-1}}
    \\\nonumber
&=\exp\left(\varphi_{N,\frac{1}{M}}\left(\frac{2(k+l+1)+1}{2(N+\frac{1}{M})}-1\right)+\varphi_{N,\frac{1}{M}}\left(\frac{2(k-l)+1}{2(N+\frac{1}{M})}\right)\right.\\\nonumber
    &\left.+\varphi_{N,\frac{1}{M}}\left(1-\frac{2k+1+\frac{2}{M}}{2(N+\frac{1}{M})}\right)-\varphi_{N,\frac{1}{M}}\left(\frac{2k+1}{2(N+\frac{1}{M})}\right)\right.\\\nonumber&\left.-\varphi_{N,\frac{1}{M}}\left(\frac{2k+1-\frac{2}{M}}{2(N+\frac{1}{M})}\right)-\varphi_{N,\frac{1}{M}}\left(\frac{1}{2(N+\frac{1}{M})}\right)\right).
\end{align}
 for $N<k+l+1\leq 2N$. Similarly, one can obtain the expression for the case $0<k+l+1\leq N$, but which will not be used in the rest of this paper. 

By using Lemma \ref{lemma-varphixi2}, we  obtain 
\begin{align}
    &(-1)^{l-k-1}\xi_{N,\frac{1}{M}}^{(2p+1)(l^2+l)+k^2+4k+\frac{3}{2}}\frac{(\xi_{N,\frac{1}{M}})_k(\xi_{N,\frac{1}{M}})_{N+k}}{(\xi_{N,\frac{1}{M}})_{k+l+1}(\xi_{N,\frac{1}{M}})_{k-l}(\xi_{N,\frac{1}{M}})_{N-k-1}}\\\nonumber
    &=\exp (N+\frac{1}{M})\left(\frac{\pi \sqrt{-1}(2(2p+1)l^2+2(2p+1)l+(6-\frac{4}{M})k+3-2(\frac{1}{2}+\frac{1}{M})^2+\frac{1}{3})}{2(N+\frac{1}{M})^2}\right.\\\nonumber
    &\left.\frac{\pi\sqrt{-1}(l-\frac{3}{4}+\frac{1}{M})}{N+\frac{1}{M}}-\frac{\log(N+\frac{1}{M})}{2(N+\frac{1}{M})}-\frac{\pi\sqrt{-1}}{12}+\frac{1}{N+\frac{1}{M}}\varphi_{N,\frac{1}{M}}\left(\frac{k+l+\frac{3}{2}}{N+\frac{1}{M}}-1\right)\right.\\\nonumber
    &\left.+\frac{1}{N+\frac{1}{M}}\varphi_{N,\frac{1}{M}}\left(\frac{k-l+\frac{1}{2}}{N+\frac{1}{M}}\right)-\frac{1}{N+\frac{1}{M}}\varphi_{N,\frac{1}{M}}\left(\frac{k+\frac{1}{2}-\frac{1}{M}}{N+\frac{1}{M}}\right)\right.\\\nonumber&\left.-\frac{1}{N+\frac{1}{M}}\varphi_{N,\frac{1}{M}}\left(\frac{k+\frac{1}{2}}{N+\frac{1}{M}}\right)-\frac{1}{N+\frac{1}{M}}\varphi_{N,\frac{1}{M}}\left(\frac{k+\frac{1}{2}+\frac{1}{M}}{N+\frac{1}{M}}\right) \right)
\end{align}
for $N<k+l+1\leq 2N$. 

Now we set
\begin{align}
    t=\frac{k+\frac{1}{2}}{N+\frac{1}{M}}, s=\frac{l+\frac{1}{2}}{N+\frac{1}{M}},
\end{align}
and define the function $\tilde{V}_{N,\frac{1}{M}}(p,t,s)$ as follows.

For $0<t<1$, $0<t-s<1$ and $1<t+s<2$, we let 
\begin{align*}  
&\tilde{V}_{N,\frac{1}{M}}(p,t,s)\\\nonumber
&=\pi\sqrt{-1}((2p+1)s^2+s+(\frac{2}{N+\frac{1}{M}}-2)t-\frac{5-\frac{4}{M}}{4(N+\frac{1}{M})}-\frac{6p+4+\frac{12}{M^2}}{12(N+\frac{1}{M})^2}-\frac{1}{12})\\\nonumber
&-\frac{\log(N+\frac{1}{M})}{2(N+\frac{1}{M})}+\frac{1}{N+\frac{1}{M}}\left(\varphi_{N,\frac{1}{M}}(t-s+\frac{1}{2(N+\frac{1}{M})})+\varphi_{N,\frac{1}{M}}(t+s+\frac{1}{2(N+\frac{1}{M})}-1)\right.\\\nonumber&\left.-\varphi_{N,\frac{1}{M}}(t-\frac{1}{M(N+\frac{1}{M})})-\varphi_{N,\frac{1}{M}}(t)-\varphi_{N,\frac{1}{M}}(t+\frac{1}{M(N+\frac{1}{M})})\right)
\end{align*}

Based on the above calculations, we obtain
\begin{align} \label{formula-coloredJonesPotential1}
    &J_{N}(\mathcal{K}_p;\xi_{N,\frac{1}{M}})\\\nonumber
    &=\sum_{k=0}^{N-1}\sum_{l=0}^k\frac{\sin \frac{\pi (2l+1)}{N+\frac{1}{M}}}{\sin\frac{\frac{\pi}{M}}{N+\frac{1}{M}}}e^{(N+\frac{1}{M})\tilde{V}_{N,\frac{1}{M}}\left(\frac{k+\frac{1}{2}}{N+\frac{1}{M}},\frac{l+\frac{1}{2}}{N+\frac{1}{M}}\right)}\\\nonumber
    &=\sum_{k=0}^{N-1}\sum_{l=0}^k\frac{\sin \frac{\pi (2l+1)}{N+\frac{1}{M}}}{\sin\frac{\frac{\pi}{M}}{N+\frac{1}{M}}}e^{(N+\frac{1}{M})\left(\tilde{V}_{N,\frac{1}{M}}\left(\frac{k+\frac{1}{2}}{N+\frac{1}{M}},\frac{l+\frac{1}{2}}{N+\frac{1}{M}}\right)-2\pi\sqrt{-1}\frac{k}{N+\frac{1}{M}}-2(p+2)\pi\sqrt{-1}\frac{l}{N+\frac{1}{M}}\right)}. 
\end{align}
For convenience, we introduce the function $V_{N,\frac{1}{M}}(p,t,s)$ which is determined by the following formula  
\begin{align}
    &\tilde{V}_{N,\frac{1}{M}}(p,t,s)-2\pi\sqrt{-1}(t-\frac{\frac{1}{2}}{N+\frac{1}{M}})-2(p+2)\pi\sqrt{-1}(s-\frac{\frac{1}{2}}{N+\frac{1}{M}})\\\nonumber
    &=V_{N,\frac{1}{M}}(p,t,s)+\pi\sqrt{-1}\frac{4p+7+\frac{4}{M}}{4(N+\frac{1}{M})}-\frac{1}{2(N+\frac{1}{M})}\log\left(N+\frac{1}{M}\right). 
\end{align}

Note that the functions $\tilde{V}_{N,\frac{1}{M}}(p,t,s)$ and $V_{N,\frac{1}{M}}(p,t,s)$ are defined on the region 
  \begin{align}
    D=\{(t,s)\in \mathbb{R}^2| 0<t<1, 0<s<1,  0< t-s<1\}.
\end{align}

From formula (\ref{formula-coloredJonesPotential1}),  we finally obtain
\begin{proposition} 
The normalized $N$-th colored Jones polynomial of the twist $\mathcal{K}_p$ at the root of unit $\xi_{N,\frac{1}{M}}$  can be computed as
\begin{align} \label{formula-coloredJonespotential2}     J_N(\mathcal{K}_{p};\xi_{N,\frac{1}{M}})=\sum_{k=0}^{N-1}\sum_{l=0}^k g_{N,\frac{1}{M}}(k,l)
\end{align}
with
\begin{align}
    g_{N,\frac{1}{M}}(k,l)=(-1)^pe^{\pi\sqrt{-1}(\frac{1}{M}-\frac{1}{4})}\frac{1}{\sqrt{(N+\frac{1}{M})}}\frac{\sin \frac{\pi (2l+1)}{N+\frac{1}{M}}}{\sin\frac{\frac{\pi}{M}}{N+\frac{1}{M}}}e^{(N+\frac{1}{M})V_{N,\frac{1}{M}}\left(p,\frac{k+\frac{1}{2}}{N+\frac{1}{M}},\frac{l+\frac{1}{2}}{N+\frac{1}{M}}\right)},
\end{align}
where the function $V_{N,\frac{1}{M}}(p,t,s)$ is  given by  
\begin{align} \label{formula-potentialVNM}
    &V_{N,\frac{1}{M}}(p,t,s)\\\nonumber
    &=\pi \sqrt{-1}\left((2p+1)s^2-(2p+3)s+\left(\frac{2}{N+\frac{1}{M}}-2\right)t-\frac{6p+4+\frac{12}{M^2}}{12(N+\frac{1}{M})^2}\right)\\\nonumber
    &+\frac{1}{N+\frac{1}{M}}\varphi_{N,\frac{1}{M}}\left(t+s+\frac{\frac{1}{2}}{N+\frac{1}{M}}-1\right)+\frac{1}{N+\frac{1}{M}}\varphi_{N,\frac{1}{M}}\left(t-s+\frac{\frac{1}{2}}{N+\frac{1}{M}}\right)\\\nonumber
    &-\frac{1}{N+\frac{1}{M}}\varphi_{N,\frac{1}{M}}\left(t\right)-\frac{1}{N+\frac{1}{M}}\varphi_{N,\frac{1}{M}}\left(t-\frac{\frac{1}{M}}{N+\frac{1}{M}}\right)\\\nonumber
    &-\frac{1}{N+\frac{1}{M}}\varphi_{N,\frac{1}{M}}\left(t+\frac{\frac{1}{M}}{N+\frac{1}{M}}\right)-\frac{\pi\sqrt{-1}}{12 }.
\end{align}
for $0<t<1$, $0<t-s<1$ and $1<t+s<2$. Similarly, one can write the corresponding  expression for the function $V_{N,\frac{1}{M}}$ for the case $0<t<1$ and $0<t\pm s<1$, but which will not used in the following. So we omit it here.  
\end{proposition}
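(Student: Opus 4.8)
The plan is to follow the chain of rewritings that converts the Habiro--Masbaum formula (\ref{formula-coloredJonestwist}) into a double sum whose summand is governed by a single potential function. First I would rewrite every factorial $\{n\}!$ appearing in (\ref{formula-coloredJonestwist}) through the identity $\{n\}! = (-1)^n q^{-n(n+1)/4}(q)_n$, so that the whole summand is expressed in terms of $q$-Pochhammer symbols $(q)_n$ and a single monomial in $q$; collecting the powers of $q$ and the sign produces the displayed intermediate expression for $J_N(\mathcal{K}_p;q)$. Specializing $q=\xi_{N,\frac{1}{M}}$ then turns the factor $(1-q^{2l+1})/(1-q^N)$ into the sine quotient $\sin\tfrac{\pi(2l+1)}{N+1/M}\big/\sin\tfrac{\pi/M}{N+1/M}$, using $1-\xi_{N,\frac{1}{M}}^{N}=1-e^{-2\pi\sqrt{-1}\frac{1/M}{N+1/M}}$ together with $|1-e^{\sqrt{-1}\phi}|=2\sin(\phi/2)$, and leaves the monomial $\xi_{N,\frac{1}{M}}^{\,(p+1/2)l^2+\cdots}$.

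The core of the argument is to replace the ratio of Pochhammer symbols by quantum dilogarithms via Lemma \ref{lemma-varphixi}. Here I must track which of the two cases of that lemma applies to each factor: the symbols $(\xi_{N,\frac{1}{M}})_k$, $(\xi_{N,\frac{1}{M}})_{k-l}$ and $(\xi_{N,\frac{1}{M}})_{N-k-1}$ have index at most $N$ and use the first formula, whereas $(\xi_{N,\frac{1}{M}})_{N+k}$ and, in the regime $N<k+l+1\leq 2N$ treated here, $(\xi_{N,\frac{1}{M}})_{k+l+1}$ exceed $N$ and use the second formula, each contributing a correction $\log(1-e^{-2\pi\sqrt{-1}/M})$. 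Assembling the resulting $\varphi_{N,\frac{1}{M}}$-terms gives the displayed exponential expression. I would then apply the functional equations of Lemma \ref{lemma-varphixi2} to replace $\varphi_{N,\frac{1}{M}}(1-x)$ by $-\varphi_{N,\frac{1}{M}}(x)$ plus an explicit quadratic polynomial, and to evaluate the boundary values $\varphi_{N,\frac{1}{M}}(\tfrac{1}{2(N+1/M)})$ and $\varphi_{N,\frac{1}{M}}(1-\tfrac{1}{2(N+1/M)})$; these are exactly what produce the $\tfrac12\log(N+\tfrac1M)$, the $\tfrac{\pi\sqrt{-1}}{4}$, and the leading $\tfrac{\pi^2}{6}$ term in the exponent.

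With these substitutions in hand, introducing the continuous variables $t=(k+\tfrac12)/(N+\tfrac1M)$ and $s=(l+\tfrac12)/(N+\tfrac1M)$ packages the exponent as $(N+\tfrac1M)\,\tilde V_{N,\frac{1}{M}}(p,t,s)$. The last step is organizational: I absorb the linear-in-$(k,l)$ shifts $-2\pi\sqrt{-1}\,t$ and $-2(p+2)\pi\sqrt{-1}\,s$ into the potential to pass from $\tilde V_{N,\frac{1}{M}}$ to $V_{N,\frac{1}{M}}$ precisely as dictated by their defining relation, and collect the remaining $(k,l)$-independent prefactors into the front factor $(-1)^p e^{\pi\sqrt{-1}(1/M-1/4)}(N+\tfrac1M)^{-1/2}$. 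This yields (\ref{formula-coloredJonespotential2}) with $g_{N,\frac{1}{M}}(k,l)$ and $V_{N,\frac{1}{M}}(p,t,s)$ as stated.

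I expect the main obstacle to be bookkeeping rather than anything conceptual: keeping the $M$-dependent corrections $\log(1-e^{-2\pi\sqrt{-1}/M})$ from the two out-of-range Pochhammer factors aligned with the sine normalization $1/\sin\tfrac{\pi/M}{N+1/M}$, and making sure every $O(1/(N+\tfrac1M))$ and $O(1/(N+\tfrac1M)^2)$ term emerging from Lemma \ref{lemma-varphixi2} lands in exactly the right slot of $V_{N,\frac{1}{M}}$. A secondary point requiring care is argument-shift matching: the arguments fed into $\varphi_{N,\frac{1}{M}}$ after the substitution, for instance $t+s+\tfrac{1/2}{N+1/M}-1$ and $t\pm\tfrac{1/M}{N+1/M}$, must be checked to lie in the strip $0<\mathrm{Re}\,t<1$ so that the representation of Lemma \ref{lemma-varphixi3} remains valid on the region $D$.
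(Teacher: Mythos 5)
Your proposal is correct and follows essentially the same route as the paper: rewrite the Habiro--Masbaum sum via $\{n\}!=(-1)^nq^{-n(n+1)/4}(q)_n$, extract the sine quotient at $q=\xi_{N,\frac{1}{M}}$, convert the Pochhammer ratio to $\varphi_{N,\frac{1}{M}}$-terms with the correct case split of Lemma \ref{lemma-varphixi} in the regime $N<k+l+1\leq 2N$, simplify with Lemma \ref{lemma-varphixi2}, substitute $t=\frac{k+\frac12}{N+\frac1M}$, $s=\frac{l+\frac12}{N+\frac1M}$, and absorb the integer-shift linear terms to pass from $\tilde V_{N,\frac{1}{M}}$ to $V_{N,\frac{1}{M}}$. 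The only small refinement worth noting is that the two $\log(1-e^{-\frac{2\pi\sqrt{-1}}{M}})$ corrections (from $(\xi_{N,\frac{1}{M}})_{N+k}$ in the numerator and $(\xi_{N,\frac{1}{M}})_{k+l+1}$ in the denominator) simply cancel against each other rather than interacting with the sine normalization.
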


We define the potential function for the twist knot
$\mathcal{K}_p$ as follows
\begin{align}  \label{formula-Vpts}
&V(p,t,s)=\lim_{N\rightarrow\infty}V_{N,\frac{1}{M}}(p,t,s)=\pi \sqrt{-1}\left((2p+1)s^2-(2p+3)s-2t\right)\\\nonumber
    &+\frac{1}{2\pi\sqrt{-1}}\left(\text{Li}_2(e^{2\pi\sqrt{-1}(t+s)})+\text{Li}_2(e^{2\pi\sqrt{-1}(t-s)})-3\text{Li}_2(e^{2\pi\sqrt{-1}t})+\frac{\pi^2}{6}\right). 
\end{align}

\section{Poisson summation formula} \label{Section-Poisson}
In this section, with the help of Poisson summation formula, we write the formula (\ref{formula-coloredJonespotential2}) as a sum of integrals.  First, according to formulas (\ref{formula-coloredJonestwist}) and (\ref{formula-coloredJonespotential2}), we have
\begin{align}
g_{N,\frac{1}{M}}(k,l)&=(-1)^lq^{\frac{k(k+3)}{4}+pl(l+1)}\frac{\{2l+1\}}{\{N\}}\frac{\{k\}!\{N+k\}!}{\{k+l+1\}!\{k-l\}!\{N-k-1\}!}|_{q=\xi_{N,M}}. 
\end{align}

By Lemmas \ref{lemma-varphixi}, \ref{lemma-varphixi2}, \ref{lemma-varphixi3} and formula (\ref{formula-Lambda(t)}), we obtain 
\begin{align}
    \log \left|\{n\}!\right|=-(N+\frac{1}{M})\Lambda\left(\frac{n+\frac{1}{2}}{N+\frac{1}{M}}\right)+O(\log (N+\frac{1}{M}))
\end{align}  
for any integer $0<n\leq N$ and at $q=\xi_{N,\frac{1}{M}}$.

We put
\begin{align}
    v_{N,\frac{1}{M}}(t,s)&=\Lambda\left(t+s-1+\frac{\frac{1}{2}}{N+\frac{1}{M}}\right)+\Lambda\left(t-s+\frac{\frac{1}{2}}{N+\frac{1}{M}}\right)\\\nonumber
    &-\Lambda\left(t-\frac{\frac{1}{M}}{N+\frac{1}{M}}\right)-\Lambda\left(t\right)-\Lambda\left(t+\frac{\frac{1}{M}}{N+\frac{1}{M}}\right), 
\end{align}
then we obtain 
\begin{align}
    |g_{N,\frac{1}{M}}(k,l)|=e^{(N+\frac{1}{M})v_{N,\frac{1}{M}}\left(\frac{k+\frac{1}{2}}{N+\frac{1}{M}},\frac{l+\frac{1}{2}}{N+\frac{1}{M}}\right)+O(\log (N+\frac{1}{M}))}. 
\end{align}

We define the function
\begin{align}
    v(t,s)=\Lambda(t+s)+\Lambda(t-s)-3\Lambda\left(t\right).
\end{align}
Note that $\left(\frac{k+\frac{1}{2}}{N+\frac{1}{M}},\frac{l+\frac{1}{2}}{N+\frac{1}{M}}\right)\in D=\{(t,s)\in \mathbb{R}^2| 1< t+s< 2, 0< t-s<1, \frac{1}{2}< t<1\}$ for $0\leq k,l\leq N-1$. So we may assume 
the function $v(t,s)$ is defined on the region $D$.
We set
    \begin{align}
    D'_0=\{0.02 \leq t-s\leq 0.7, 1.02 \leq t+s\leq 1.7, 0.2 \leq s\leq 0.8,0.5\leq t\leq 0.909\}.
\end{align}

Let $\zeta_{\mathbb{R}}(p)$ be the real part of the critical value $V(p,t_0,s_0)$, see formula (\ref{formula-zetaR(p)}) for its precise definition. 

Then we have
\begin{lemma}[\cite{CZ23-1}, Lemma 4.1] \label{lemma-regionD'0}
The following domain
    \begin{align} \label{formula-domain}
        \left\{(t,s)\in D| v(t,s)> \frac{3.509}{2\pi }\right\}
    \end{align}
    is included in the region $D'_0$.
\end{lemma}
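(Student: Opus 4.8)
The plan is to establish the contrapositive. Writing $c=\frac{3.509}{2\pi}\approx 0.5585$ for the threshold and recalling that $D=\{1<t+s<2,\ 0<t-s<1,\ \tfrac12<t<1\}$, it suffices to prove the pointwise bound $v(t,s)\le c$ on the complement $D\setminus D_0'$. A point of $D$ fails to lie in $D_0'$ precisely when at least one of the defining inequalities of $D_0'$ is violated, so $D\setminus D_0'$ is covered by the finitely many closed slabs $\{t-s\le 0.02\}$, $\{t-s\ge 0.7\}$, $\{t+s\le 1.02\}$, $\{t+s\ge 1.7\}$, $\{s\le 0.2\}$, $\{s\ge 0.8\}$ and $\{t\ge 0.909\}$ (the constraint $t\ge 0.5$ is automatic on $D$), and it is enough to bound $v$ on each slab separately.

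Before doing so I would record the reflection symmetry $v(t,s)=v(t,1-s)$, which follows immediately from the $1$-periodicity of $\Lambda$ since $\Lambda(t+s)=\Lambda(t+s-1)$ on $1<t+s<2$. This involution preserves both $D$ and $D_0'$, exchanges the slabs $\{t-s\le 0.02\}\leftrightarrow\{t+s\le 1.02\}$, $\{t-s\ge 0.7\}\leftrightarrow\{t+s\ge 1.7\}$, $\{s\le 0.2\}\leftrightarrow\{s\ge 0.8\}$, and fixes $\{t\ge 0.909\}$; hence it halves the work, and I only treat $\{t-s\le 0.02\}$, $\{t-s\ge 0.7\}$, $\{s\le 0.2\}$ and $\{t\ge 0.909\}$.

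The estimates rest on the standard properties of the Lobachevsky function recalled in the Preliminaries: $\Lambda$ is odd and $1$-periodic with $\Lambda(1)=\Lambda(\tfrac12)=0$, it attains its global maximum $\Lambda_m:=\Lambda(\tfrac16)\approx 0.1615$ at $\tfrac16$, and $\Lambda'(x)=-\log|2\sin\pi x|$ is positive on $(0,\tfrac16)\cup(\tfrac56,1)$ and negative on $(\tfrac16,\tfrac56)$. In particular $\Lambda(t)\le 0$ for $t\in[\tfrac12,1]$, so the term $-3\Lambda(t)$ is nonnegative and bounded above by $3\Lambda_m\approx 0.4845$. On any slab where one of the three arguments $t+s$, $t-s$, $t$ is confined to an interval on which its $\Lambda$-term is small or negative, I would bound $v=\Lambda(t+s)+\Lambda(t-s)-3\Lambda(t)$ by maximizing each term over the range of its argument (using the monotonicity intervals of $\Lambda$ to locate the maxima), keeping track of the linear relation $t=\tfrac12\big((t+s)+(t-s)\big)$ that ties the three arguments together. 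The crude bound $v\le \Lambda(t+s)+\Lambda(t-s)+3\Lambda_m$ already settles the slabs on which $\Lambda(t+s)+\Lambda(t-s)$ stays comfortably below $c-3\Lambda_m\approx 0.074$; for the remaining slabs this is too weak, and I would subdivide the slab into finitely many small rectangles in the $(t,s)$-plane, replace each $\Lambda$-term by its maximum over the corresponding short coordinate interval, and reduce to checking finitely many explicit numerical inequalities $<c$.

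The main obstacle is precisely the delicate slabs adjacent to the maximum of $v$ --- in practice $\{t-s\ge 0.7\}$, $\{t+s\ge 1.7\}$, the $s$-boundaries, and $\{t\ge 0.909\}$ --- where the super-level set $\{v>c\}$ nearly touches the corresponding face of $D_0'$, so the term-by-term bounds are tight and the estimate $-3\Lambda(t)\le 3\Lambda_m$ is no longer affordable. Controlling $v$ there needs genuine care: one could try to show that $v$ decreases monotonically as one moves outward from the face into the slab, but since $\Lambda'(t\pm s)$ and $\Lambda'(t)$ change sign at $\tfrac16$ and $\tfrac56$, the partial derivatives $\partial_u v=\Lambda'(t-s)-\tfrac32\Lambda'(t)$ and $\partial_w v=\Lambda'(t+s)-\tfrac32\Lambda'(t)$ (in coordinates $u=t-s$, $w=t+s$) are not of constant sign, so a clean monotonicity statement is unavailable. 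The robust route, which I expect to carry out, is therefore a sufficiently fine rectangular subdivision of each delicate slab together with rigorous per-rectangle upper bounds for the three $\Lambda$-terms, turning the lemma into a finite (if lengthy) verification that each such bound stays below $c$.
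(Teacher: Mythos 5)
This lemma is imported verbatim from [CZ23-1, Lemma 4.1]; the present paper gives no proof of it, so the comparison is against the standard argument of that reference, which is exactly the strategy you describe: pass to the contrapositive, cover $D\setminus D'_0$ by the slabs on which one defining inequality of $D'_0$ fails, exploit the symmetry $v(t,1-s)=v(t,s)$ coming from the $1$-periodicity of $\Lambda$, and bound $v=\Lambda(t+s)+\Lambda(t-s)-3\Lambda(t)$ on each slab using the monotonicity of $\Lambda$ on $(0,\tfrac16)$, $(\tfrac16,\tfrac56)$, $(\tfrac56,1)$. Your reduction is set up correctly (the seven slabs do cover $D\setminus D'_0$ since $t\ge 0.5$ is automatic on $D$, and the involution pairs them as you say), and you correctly diagnose where the difficulty lies.

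The gap is that the proof stops exactly where the content of the lemma begins. The margin here is very small: the maximum of $v$ on $D$ (attained near $s=\tfrac12$, $t\approx 0.77$) is about $0.583$, while the threshold is $\tfrac{3.509}{2\pi}\approx 0.5585$, a difference of roughly $0.025$; and on several of your ``delicate'' slabs the term-by-term bound you would fall back on gives only $v\le 4\Lambda(\tfrac16)\approx 0.646$ (e.g.\ on $\{t-s\ge 0.7\}$ one has $\Lambda(t-s)\le 0$ but $\Lambda(t+s)-3\Lambda(t)\le 4\Lambda(\tfrac16)$), which exceeds the threshold. So the lemma is not established by any of the inequalities you actually write down; it rests entirely on the ``sufficiently fine rectangular subdivision \dots finite (if lengthy) verification'' that you announce but do not perform. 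For a statement whose entire substance is a quantitative estimate with a $0.025$ margin, deferring that finite check leaves the lemma unproven: one must either exhibit the subdivision and the per-rectangle bounds explicitly (as is done in [CZ23-1] and in the analogous lemmas of Ohtsuki's papers), or find a monotonicity argument pushing points outward from each face of $D'_0$ --- which, as you yourself note, is obstructed by the sign changes of $\Lambda'$ at $\tfrac16$ and $\tfrac56$.
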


\begin{remark}
We can take $\varepsilon>0$ small enough (such as $\varepsilon=0.00001$), and set 
\begin{align}
    D'_\varepsilon=\left\{0.02+\varepsilon \leq t-s\leq 0.7-\varepsilon, 1.02+\varepsilon \leq t+s\leq 1.7-\varepsilon,\right.\\\nonumber \left. 0.2+\varepsilon \leq s\leq 0.8-\varepsilon,0.5+\varepsilon\leq t\leq 0.909-\varepsilon\right\},
\end{align}
then the region (\ref{formula-domain}) can also be included in the region $D'_{\varepsilon}$. 
\end{remark}

\begin{proposition}[\cite{CZ23-1}, Proposition 4.3] \label{prop-gkl}
For $p\geq 6$ and  $(\frac{k+\frac{1}{2}}{N+\frac{1}{M}},\frac{l+\frac{1}{2}}{N+\frac{1}{M}})\in D\setminus D'_0$,  we have
\begin{align}
    |g_{N,\frac{1}{M}}(k,l)|<O\left(e^{(N+\frac{1}{M})\left(\zeta_{\mathbb{R}}(p)-\epsilon\right)}\right)
\end{align}
for some sufficiently small $\epsilon>0$.
\end{proposition}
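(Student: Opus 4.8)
The plan is to reduce the estimate to the $p$-independent Lobachevsky potential $v(t,s)=\Lambda(t+s)+\Lambda(t-s)-3\Lambda(t)$, whose sublevel geometry is already pinned down by Lemma \ref{lemma-regionD'0}, and then to compare the finite-level exponent with its limit. Recall from the previous section that at the lattice point $(t,s)=\bigl(\frac{k+1/2}{N+1/M},\frac{l+1/2}{N+1/M}\bigr)$ one has
\begin{align*}
|g_{N,\frac{1}{M}}(k,l)| = \exp\left((N+\tfrac{1}{M})\,v_{N,\frac{1}{M}}(t,s) + O(\log(N+\tfrac{1}{M}))\right).
\end{align*}
Hence it suffices to bound $v_{N,\frac{1}{M}}(t,s)$ from above when $(t,s)\in D\setminus D'_0$, and to check that the amplitude error $O(\log(N+\tfrac1M))$, which also absorbs the prefactor $\sin\frac{\pi(2l+1)}{N+1/M}/\sin\frac{\pi/M}{N+1/M}=O(N)$, is negligible against the resulting exponential gap.

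First I would compare $v_{N,\frac{1}{M}}$ with $v$. Using the $1$-periodicity $\Lambda(u-1)=\Lambda(u)$, the five arguments of $\Lambda$ in $v_{N,\frac{1}{M}}(t,s)$ differ from the arguments $t+s,\,t-s,\,t$ of $v(t,s)$ only by shifts of size $O(\tfrac{1}{N+1/M})$, and the three $\Lambda$-terms clustered at $t$ collapse to $3\Lambda(t)$ in the limit. Since $\Lambda$ is continuous and $1$-periodic, hence uniformly continuous on $\mathbb{R}$, I obtain
\begin{align*}
\big|\,v_{N,\frac{1}{M}}(t,s)-v(t,s)\,\big|\le \delta_N \quad\text{uniformly on }\overline{D},\ \text{ with } \delta_N\to 0.
\end{align*}

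Next I would invoke Lemma \ref{lemma-regionD'0}, whose contrapositive gives $v(t,s)\le \frac{3.509}{2\pi}$ for $(t,s)\in D\setminus D'_0$. This threshold is designed to sit strictly below the critical value: for $p\ge 6$ one has $\zeta_{\mathbb{R}}(p)>\frac{3.509}{2\pi}$, equivalently $vol(S^3\setminus\mathcal{K}_p)>3.509$ by Lemma [\cite{CZ23-1}, Lemma 5.4], and this numerical inequality is exactly where the hypothesis $p\ge 6$ enters. Writing $\epsilon_0=\zeta_{\mathbb{R}}(p)-\frac{3.509}{2\pi}>0$ and combining the previous two displays yields, on $D\setminus D'_0$,
\begin{align*}
v_{N,\frac{1}{M}}(t,s)\le \frac{3.509}{2\pi}+\delta_N=\zeta_{\mathbb{R}}(p)-\epsilon_0+\delta_N.
\end{align*}
Multiplying by $(N+\tfrac1M)$, restoring the $O(\log(N+\tfrac1M))$ term, and fixing any $\epsilon$ with $0<\epsilon<\epsilon_0$, the negative contribution $-(N+\tfrac1M)(\epsilon_0-\epsilon)$ dominates both $(N+\tfrac1M)\delta_N$ and the logarithmic error for $N$ large, which gives $|g_{N,\frac{1}{M}}(k,l)|<O\bigl(e^{(N+1/M)(\zeta_{\mathbb{R}}(p)-\epsilon)}\bigr)$ as claimed.

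The main obstacle I anticipate is the uniformity of the comparison $v_{N,\frac{1}{M}}\approx v$ up to $\partial D$, where the arguments of $\Lambda$ approach the integers at which $\Lambda$ is merely continuous, not differentiable, since $\Lambda'(t)=-\log|2\sin\pi t|$ blows up there. The clean way around this is to phrase the comparison entirely through the uniform continuity of $\Lambda$ itself and never through any bound on $\Lambda'$, and to confirm that the $O(\log(N+\tfrac1M))$ amplitude error, including the $\sin/\sin$ prefactor, is genuinely uniform over all admissible $(k,l)$. Once these uniformities are secured, the remainder is just the assembly of Lemma \ref{lemma-regionD'0} with the numerical inequality $\zeta_{\mathbb{R}}(p)>\frac{3.509}{2\pi}$.
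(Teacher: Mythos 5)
Your proposal is correct and follows essentially the same route as the paper's (i.e.\ \cite{CZ23-1}'s) proof: reduce $|g_{N,\frac{1}{M}}(k,l)|$ to the exponent $v_{N,\frac{1}{M}}$, compare it uniformly with $v(t,s)=\Lambda(t+s)+\Lambda(t-s)-3\Lambda(t)$ via periodicity and uniform continuity of $\Lambda$, apply the contrapositive of Lemma \ref{lemma-regionD'0} to get $v\le\frac{3.509}{2\pi}$ off $D'_0$, and beat this by $\zeta_{\mathbb{R}}(p)$ for $p\ge 6$, with the $O(\log(N+\frac{1}{M}))$ amplitude absorbed into the exponential gap. Your flagged concerns (uniformity up to $\partial D$ and of the $\sin/\sin$ prefactor) are exactly the right ones and are handled the same way in the source.
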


For a sufficiently small $\varepsilon$, we take a smooth bump function $\psi$ on $\mathbb{R}^2$ such that
$\psi(t,s)=1$ on $(t,s)\in D'_{\varepsilon}$,  $0<\psi(t,s)<1$ on $(t,s)\in D'_0\setminus D'_{\varepsilon}$, $\psi(t,s)=0$ for $(t,s)\notin D'_0$.  
Let 
\begin{align}
    h_{N,\frac{1}{M}}(k,l)=\psi\left(\frac{k+\frac{1}{2}}{N+\frac{1}{M}},\frac{l+\frac{1}{2}}{N+\frac{1}{M}}\right)g_{N,\frac{1}{M}}(k,l).
\end{align}
Then  by Proposition \ref{prop-gkl},  for $p\geq 6$, we have 
\begin{align}
J_N(\mathcal{K}_p;\xi_{N,\frac{1}{M}})=\sum_{(k,l)\in \mathbb{Z}^2}h_{N,\frac{1}{M}}(k,l)+O\left(e^{(N+\frac{1}{M})\left(\zeta_{\mathbb{R}}(p)-\epsilon\right)}\right).
\end{align}

Note that $h_{N,\frac{1}{M}}$ is $C^{\infty}$-smooth and equals zero outside $D'_0$, it is in the Schwartz space on $\mathbb{R}^2$. By using  Poisson summation formula, we obtain
\begin{proposition} \label{prop-fouriercoeff}
For $p\geq 6$ and $M\geq 2$, the normalized $N$-th colored Jones polynomial of the twist knot $\mathcal{K}_p$ at the root of unity $\xi_{N,\frac{1}{M}}$ is given by 
\begin{align}
J_N(\mathcal{K}_p;\xi_{N,\frac{1}{M}})=\sum_{(m,n)\in \mathbb{Z}^2}\hat{h}_{N,\frac{1}{M}}(m,n)+O\left(e^{(N+\frac{1}{M})\left(\zeta_{\mathbb{R}}(p)-\epsilon\right)}\right)
\end{align}
where
  \begin{align} \label{formula-hathMN}
      \hat{h}_{N,\frac{1}{M}}(m,n)&=(-1)^{m+n+p}e^{\pi\sqrt{-1}(\frac{1}{M}-\frac{1}{4})}\frac{(N+\frac{1}{M})^{\frac{3}{2}}}{\sin\frac{\frac{\pi}{M}}{N+\frac{1}{M}}}\\\nonumber
    &\cdot \int_{D'_0}\psi(t,s)\sin(2\pi s)e^{(N+\frac{1}{M})V_{N,\frac{1}{M}}\left(p,t,s;m,n\right)}dtds
\end{align}
with
\begin{align}
V_{N,\frac{1}{M}}\left(p,t,s;m,n\right)=V_{N,\frac{1}{M}}\left(p,t,s\right)-2\pi\sqrt{-1}mt-2\pi\sqrt{-1}ns,
\end{align}
and 
$V_{N,\frac{1}{M}}(p,t,s)$ is given by formula (\ref{formula-potentialVNM}).

\end{proposition}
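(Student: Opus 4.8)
The plan is to start from the exact double-sum formula (\ref{formula-coloredJonespotential2}), replace the triangular lattice sum by an unrestricted sum over $\mathbb{Z}^2$ using the bump function $\psi$, apply the Poisson summation formula, and finally identify the resulting Fourier coefficients with the integral (\ref{formula-hathMN}) by an explicit change of variables. The only genuinely analytic input is the exponential decay estimate of Proposition \ref{prop-gkl}; the rest is bookkeeping.

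First I would justify passing from $\sum_{k=0}^{N-1}\sum_{l=0}^{k}g_{N,\frac{1}{M}}(k,l)$ to the unrestricted sum $\sum_{(k,l)\in\mathbb{Z}^2}h_{N,\frac{1}{M}}(k,l)$. By construction the support of $\psi$ is $D'_0$, which lies inside the region $\{0<t-s\}$; in the rescaled coordinates this is exactly the triangular constraint $l\le k$ of the original summation, so inserting $\psi$ and extending to all of $\mathbb{Z}^2$ adds only lattice points on which $h_{N,\frac{1}{M}}$ vanishes, and removes no dominant term. For the finitely many (at most $O(N^2)$) lattice points in the original range whose rescaled image lies in $D\setminus D'_0$, Proposition \ref{prop-gkl} bounds each term by $O\!\left(e^{(N+\frac{1}{M})(\zeta_{\mathbb{R}}(p)-\epsilon)}\right)$; after shrinking $\epsilon$ slightly to absorb the polynomial count, their total contribution is swallowed by the stated error term. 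Hence
\begin{align*}
J_N(\mathcal{K}_p;\xi_{N,\frac{1}{M}})=\sum_{(k,l)\in\mathbb{Z}^2}h_{N,\frac{1}{M}}(k,l)+O\!\left(e^{(N+\frac{1}{M})(\zeta_{\mathbb{R}}(p)-\epsilon)}\right).
\end{align*}
Since $\psi$ is $C^\infty$ with compact support, $h_{N,\frac{1}{M}}$ is Schwartz, so the Poisson summation formula applies and yields $\sum_{(k,l)\in\mathbb{Z}^2}h_{N,\frac{1}{M}}(k,l)=\sum_{(m,n)\in\mathbb{Z}^2}\hat{h}_{N,\frac{1}{M}}(m,n)$ with $\hat{h}_{N,\frac{1}{M}}(m,n)=\int_{\mathbb{R}^2}h_{N,\frac{1}{M}}(k,l)\,e^{-2\pi\sqrt{-1}(mk+nl)}\,dk\,dl$.

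Finally I would evaluate this Fourier integral by substituting $t=\frac{k+\frac{1}{2}}{N+\frac{1}{M}}$, $s=\frac{l+\frac{1}{2}}{N+\frac{1}{M}}$. The Jacobian produces $(N+\frac{1}{M})^2$, which combines with the prefactor $\frac{1}{\sqrt{N+\frac{1}{M}}}$ in $g_{N,\frac{1}{M}}$ to give $(N+\frac{1}{M})^{3/2}$. The linear phase becomes $e^{-2\pi\sqrt{-1}(mk+nl)}=(-1)^{m+n}e^{-2\pi\sqrt{-1}(N+\frac{1}{M})(mt+ns)}$, so the sign $(-1)^{m+n}$ merges with the $(-1)^p$ from $g_{N,\frac{1}{M}}$ to give $(-1)^{m+n+p}$, while the exponential factor merges into the potential to form $V_{N,\frac{1}{M}}(p,t,s;m,n)=V_{N,\frac{1}{M}}(p,t,s)-2\pi\sqrt{-1}mt-2\pi\sqrt{-1}ns$. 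One checks directly that $\sin\frac{\pi(2l+1)}{N+\frac{1}{M}}=\sin(2\pi s)$ under this substitution, and the factor $\frac{1}{\sin\frac{\pi/M}{N+\frac{1}{M}}}$ and the phase $e^{\pi\sqrt{-1}(\frac{1}{M}-\frac{1}{4})}$ pass through unchanged. Collecting everything reproduces formula (\ref{formula-hathMN}).

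The main obstacle is the first step: the truncation and the transition to the unrestricted lattice sum. This is where Proposition \ref{prop-gkl} carries the real weight, and the delicate points are verifying that the support $D'_0$ of $\psi$ is compatible with the triangular summation constraint (so that no dominant contribution is lost when extending to $\mathbb{Z}^2$) and that the polynomially many boundary lattice points are genuinely dominated by the exponential error after absorbing them into $\epsilon$. Once this is secured, the Poisson summation and the change of variables are routine computations.
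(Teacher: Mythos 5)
Your proposal is correct and follows essentially the same route as the paper: truncate to $D'_0$ via Proposition \ref{prop-gkl}, insert the bump function, apply Poisson summation to the Schwartz function $h_{N,\frac{1}{M}}$, and rescale via $t=\frac{k+\frac{1}{2}}{N+\frac{1}{M}}$, $s=\frac{l+\frac{1}{2}}{N+\frac{1}{M}}$ to obtain (\ref{formula-hathMN}); your bookkeeping of the Jacobian, the factor $(-1)^{m+n}$ from the half-integer shift, and the identification $\sin\frac{\pi(2l+1)}{N+\frac{1}{M}}=\sin(2\pi s)$ all check out. The one elision (shared with the paper's own terse argument) is that the annulus $D'_0\setminus D'_{\varepsilon}$, where $0<\psi<1$, is not literally covered by Proposition \ref{prop-gkl} as stated; one needs the accompanying remark that the region $\left\{v(t,s)>\frac{3.509}{2\pi}\right\}$ is already contained in $D'_{\varepsilon}$, so the same exponential bound applies to those terms as well.
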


We define the function
\begin{align}
    &V(p,t,s;m,n)\\\nonumber 
    &=\lim_{N\rightarrow \infty}V_{N,\frac{1}{M}}(p,t,s;m,n)\\\nonumber
    &=\pi \sqrt{-1}\left((2p+1)s^2-(2p+3+2n)s-(2+2m)t\right)\\\nonumber
    &+\frac{1}{2\pi\sqrt{-1}}\left(\text{Li}_2(e^{2\pi\sqrt{-1}(t+s)})+\text{Li}_2(e^{2\pi\sqrt{-1}(t-s)})-3\text{Li}_2(e^{2\pi\sqrt{-1}t})+\frac{\pi^2}{6}\right). 
\end{align}

\begin{lemma}
We have the following identity
    \begin{align} \label{formula-potientalsym}
          V_{N,\frac{1}{M}}(p,t,1-s;m,n)&=V_{N,\frac{1}{M}}(p,t,s;m,n)-2(n+1)\pi\sqrt{-1}(1-2s)\\\nonumber
          &=V_{N,\frac{1}{M}}(p,t,s;m,-n-2)-2\pi \sqrt{-1}(n+1). 
    \end{align}
\end{lemma}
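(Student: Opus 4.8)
The plan is to verify the identity directly from the definitions, exploiting a reflection symmetry $s \mapsto 1-s$ of the Faddeev-type terms in $V_{N,\frac{1}{M}}(p,t,s)$. Recall that $V_{N,\frac{1}{M}}(p,t,s;m,n) = V_{N,\frac{1}{M}}(p,t,s) - 2\pi\sqrt{-1}mt - 2\pi\sqrt{-1}ns$, so the whole statement reduces to understanding how the unshifted potential (\ref{formula-potentialVNM}) behaves under $s \mapsto 1-s$, together with the elementary bookkeeping of the linear term $-2\pi\sqrt{-1}ns$.

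First I would establish the base symmetry $V_{N,\frac{1}{M}}(p,t,1-s) = V_{N,\frac{1}{M}}(p,t,s) - 2\pi\sqrt{-1}(1-2s)$. The key observation is that the three $\varphi$ terms depending only on $t$ are untouched by the substitution, while the two terms $\varphi_{N,\frac{1}{M}}(t+s+\frac{1/2}{N+1/M}-1)$ and $\varphi_{N,\frac{1}{M}}(t-s+\frac{1/2}{N+1/M})$ are exactly interchanged: substituting $s\mapsto 1-s$ sends the argument $t+s+\frac{1/2}{N+1/M}-1$ to $t-s+\frac{1/2}{N+1/M}$ and conversely. Hence their sum is invariant, and one checks en route that the domain constraints $0<t-s<1$ and $1<t+s<2$ are likewise merely swapped, so that the same formula (\ref{formula-potentialVNM}) legitimately applies at both $(t,s)$ and $(t,1-s)$. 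The only net contribution therefore comes from the polynomial-in-$s$ part, namely $(2p+1)[(1-s)^2-s^2]-(2p+3)[(1-s)-s] = -2(1-2s)$, which produces precisely the factor $-2\pi\sqrt{-1}(1-2s)$.

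With the base symmetry in hand, the first equality follows by incorporating the linear shift: substituting $s\mapsto 1-s$ in $-2\pi\sqrt{-1}ns$ gives $-2\pi\sqrt{-1}n(1-s)$, and combining $-2\pi\sqrt{-1}(1-2s)$ with the resulting constant and linear-in-$s$ pieces collapses to $-2(n+1)\pi\sqrt{-1}(1-2s)$. For the second equality I would simply compare the two presentations as linear functions of $s$: on one side $-2\pi\sqrt{-1}ns - 2(n+1)\pi\sqrt{-1}(1-2s)$, on the other $-2\pi\sqrt{-1}(-n-2)s - 2\pi\sqrt{-1}(n+1)$; both reduce to $2\pi\sqrt{-1}(n+2)s - 2\pi\sqrt{-1}(n+1)$, so the identity holds.

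Since the entire argument is an exact algebraic manipulation, with no error terms or asymptotics involved, there is no serious analytic obstacle. The only point demanding care is the structural heart of the proof: confirming that the two Faddeev terms interchange \emph{exactly}, including the matching of the $\frac{1/2}{N+1/M}$ shifts, and then tracking the constant versus linear-in-$s$ pieces faithfully when reconciling the coefficient $n$ against $-n-2$ in the second equality.
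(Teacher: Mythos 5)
Your proposal is correct and follows essentially the same route as the paper: the paper likewise reduces everything to the polynomial-in-$s$ part (implicitly using that the two $\varphi_{N,\frac{1}{M}}$ terms with arguments $t+s+\frac{1/2}{N+1/M}-1$ and $t-s+\frac{1/2}{N+1/M}$ are exactly interchanged under $s\mapsto 1-s$) and then verifies the resulting quadratic identity directly. Your write-up is if anything slightly more explicit about the swap of the Faddeev terms and of the domain constraints, which the paper leaves tacit.
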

\begin{proof}
    By a straightforward computation, we obtain the following identity
    \begin{align}
        &\pi\sqrt{-1}\left((2p+1)(1-s)^2-(2p+2n+3)(1-s)+\left(\frac{2}{N+\frac{1}{M}}-2m-2\right)t-\frac{1}{12}\right)\\\nonumber
        &=\pi\sqrt{-1}\left((2p+1)s^2-(2p+2n+3)s+\left(\frac{2}{N+\frac{1}{M}}-2m-2\right)t-\frac{1}{12}\right)\\\nonumber
        &-2(n+1)\pi\sqrt{-1}(1-2s)\\\nonumber
        &=\pi\sqrt{-1}\left((2p+1)s^2-(2p+2(-n-2)+3)s)+\left(\frac{2}{N+\frac{1}{M}}-2m-2\right)t-\frac{1}{12}\right)\\\nonumber
        &-2\pi \sqrt{-1}(n+1).
    \end{align}
    which immediately gives the formula (\ref{formula-potientalsym}).
\end{proof}

Similar to the proof of Proposition 4.6 in \cite{CZ23-1}, we obtain 
\begin{proposition} \label{prop-hathmn}
    For any $m,n\in \mathbb{Z}$, we have
    \begin{align} \label{formula-hnmsy}
        \hat{h}_{N,\frac{1}{M}}(m,-n-2)=-e^{\frac{2\pi\sqrt{-1}(n+1)}{M}}\hat{h}_{N,\frac{1}{M}}(m,n).
    \end{align}
\end{proposition}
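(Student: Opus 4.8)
The plan is to obtain (\ref{formula-hnmsy}) directly from the reflection $s\mapsto 1-s$ in the integral (\ref{formula-hathMN}) defining $\hat{h}_{N,\frac{1}{M}}(m,-n-2)$, combined with the symmetry identity (\ref{formula-potientalsym}) just proved. First I would observe that the scalar prefactor in (\ref{formula-hathMN}) is insensitive to the replacement $n\mapsto -n-2$: the factors $e^{\pi\sqrt{-1}(\frac{1}{M}-\frac14)}$, $(N+\frac{1}{M})^{3/2}$ and $\sin\frac{\pi/M}{N+\frac{1}{M}}$ do not involve $n$, and the sign satisfies $(-1)^{m+(-n-2)+p}=(-1)^{m+n+p}$. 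Thus it is enough to relate the two integrals.

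Next I would substitute $s\mapsto 1-s$ in the integral attached to $\hat{h}_{N,\frac{1}{M}}(m,-n-2)$. To make this substitution legitimate I must check that the integration region $D'_0$ and the cutoff $\psi$ are invariant under this reflection. This is a short direct verification: $s\mapsto 1-s$ sends $t-s\mapsto (t+s)-1$ and $t+s\mapsto (t-s)+1$, so it simply interchanges the constraints $0.02\le t-s\le 0.7$ and $1.02\le t+s\le 1.7$ of $D'_0$ while preserving the constraints $0.2\le s\le 0.8$ and $0.5\le t\le 0.909$; the analogous statement holds for $D'_\varepsilon$. Hence $D'_0$ is reflection-invariant, and since $D'_\varepsilon$ and $D'_0$ are both symmetric I am free to arrange the bump function so that $\psi(t,1-s)=\psi(t,s)$. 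After the substitution the measure is preserved in absolute value, $\psi$ is unchanged, and $\sin(2\pi(1-s))=-\sin(2\pi s)$, so the integral equals $-\int_{D'_0}\psi(t,s)\sin(2\pi s)\,e^{(N+\frac{1}{M})V_{N,\frac{1}{M}}(p,t,1-s;m,-n-2)}\,dt\,ds$.

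Then I would rewrite the exponent using (\ref{formula-potientalsym}) with $n$ replaced by $-n-2$, giving
\begin{align}
V_{N,\frac{1}{M}}(p,t,1-s;m,-n-2)=V_{N,\frac{1}{M}}(p,t,s;m,n)+2\pi\sqrt{-1}(n+1).
\end{align}
Multiplying by $(N+\frac{1}{M})$ turns the affine term into the constant factor $e^{2\pi\sqrt{-1}(n+1)(N+\frac{1}{M})}=e^{2\pi\sqrt{-1}(n+1)/M}$, where I use the integrality $e^{2\pi\sqrt{-1}(n+1)N}=1$ to discard the $N$-part. Pulling this factor and the sign $-1$ (from the sine) out of the integral, and restoring the unchanged prefactor, reproduces precisely $-e^{2\pi\sqrt{-1}(n+1)/M}\hat{h}_{N,\frac{1}{M}}(m,n)$, which is (\ref{formula-hnmsy}).

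Once (\ref{formula-potientalsym}) is available, the argument is essentially mechanical; the only genuine point requiring attention is the reflection symmetry of the cutoff data. I expect this to be the main (and rather mild) obstacle: one must confirm that both $D'_\varepsilon$ and $D'_0$ are preserved by $s\mapsto 1-s$ — which follows from the interchange of the $t\pm s$ inequalities noted above — so that the earlier freedom in choosing $\psi$ can be used to take it symmetric. Everything else (invariance of the Lebesgue measure, the sign from the sine, and the vanishing of the $N$-dependent phase) is routine.
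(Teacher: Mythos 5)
Your proposal is correct and follows the same route the paper intends (it defers to Proposition 4.6 of \cite{CZ23-1}): reflect $s\mapsto 1-s$ in the integral (\ref{formula-hathMN}), use the invariance of $D'_0$ and $D'_\varepsilon$ together with a symmetric choice of $\psi$, apply the identity (\ref{formula-potientalsym}) to shift the exponent by $2\pi\sqrt{-1}(n+1)$, and collect the sign from $\sin(2\pi(1-s))=-\sin(2\pi s)$ and the phase $e^{2\pi\sqrt{-1}(n+1)/M}$. Your attention to the reflection symmetry of the cutoff data and to the parity of the prefactor $(-1)^{m+n+p}$ covers the only points that need checking.
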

\begin{remark}
Formula (\ref{formula-hnmsy}) implies that 
\begin{align}
    \hat{h}_{N,\frac{1}{M}}(m,-1)=0.
\end{align}
This
is the big cancellation. The first situation of such phenomenon of ``Big cancellation" happened in quantum invariants is discovered in the Volume Conjecture of the Turaev-Viro invariants by Chen-Yang \cite{CY18}. 
The hidden reason behind that was found and described as a precise statement of symmetric property of asymptotics of quantum 6j-symbol which is on the Poisson Summation level by Chen-Murakami which is Conjecture 3 in \cite{CJ17}. A special  case of Conjecture 3 in  \cite{CJ17} was proved by Detcherry-Kalfagianni-Yang in \cite{DKY18}.
To the best of our knowledge, this is the first time that such a phenomenon of big cancellation on the Poisson Summation level on the case of colored Jones polynomial is proved.
\end{remark}

\section{Asymptotic expansion at the root of unity $e^{\frac{2\pi\sqrt{-1}}{N+\frac{1}{M}}}$} \label{Section-Asym1}

The goal of this section is to estimate each Fourier coefficients $\hat{h}_N(m,n)$ appearing in Proposition \ref{prop-fouriercoeff}. 

Recall that 
\begin{align}
    &V_{N,\frac{1}{M}}(p,t,s)\\\nonumber
    &=\pi \sqrt{-1}\left((2p+1)s^2-(2p+3)s+\left(\frac{2}{N+\frac{1}{M}}-2\right)t-\frac{6p+4+\frac{12}{M^2}}{12(N+\frac{1}{M})^2}\right)\\\nonumber
    &+\frac{1}{N+\frac{1}{M}}\varphi_{N,\frac{1}{M}}\left(t+s+\frac{\frac{1}{2}}{N+\frac{1}{M}}-1\right)+\frac{1}{N+\frac{1}{M}}\varphi_{N,\frac{1}{M}}\left(t-s+\frac{\frac{1}{2}}{N+\frac{1}{M}}\right)\\\nonumber
    &-\frac{1}{N+\frac{1}{M}}\varphi_{N,\frac{1}{M}}\left(t\right)-\frac{1}{N+\frac{1}{M}}\varphi_{N,\frac{1}{M}}\left(t-\frac{\frac{1}{M}}{N+\frac{1}{M}}\right)\\\nonumber
    &-\frac{1}{N+\frac{1}{M}}\varphi_{N,\frac{1}{M}}\left(t+\frac{\frac{1}{M}}{N+\frac{1}{M}}\right)-\frac{\pi\sqrt{-1}}{12 }.
\end{align}
and 
\begin{align}
    &V(p,t,s;m,n)\\\nonumber 
    &=\pi \sqrt{-1}\left((2p+1)s^2-(2p+3+2n)s-(2+2m)t\right)\\\nonumber
    &+\frac{1}{2\pi\sqrt{-1}}\left(\text{Li}_2(e^{2\pi\sqrt{-1}(t+s)})+\text{Li}_2(e^{2\pi\sqrt{-1}(t-s)})-3\text{Li}_2(e^{2\pi\sqrt{-1}t})+\frac{\pi^2}{6}\right). 
\end{align}
We have 
\begin{lemma} \label{lemma-VMNV}
    For any $L>0$, in the region 
    \begin{align}
        \{(t,s)\in \mathbb{C}^2|(Re(t),Re(s))\in D'_0, |Im t|<L, |Im s|<L\},
    \end{align}
we have 
\begin{align}
     V_{N,\frac{1}{M}}(p,t,s;m,n)&=V(p,t,s;m,n)-\frac{1}{2(N+\frac{1}{M})}\left(\log(1-e^{2\pi\sqrt{-1}(t+s)})\right.\\\nonumber
   &\left.+\log(1-e^{2\pi\sqrt{-1}(t-s)})-4\pi\sqrt{-1}t\right)+\frac{w_{N,\frac{1}{M}}(t,s)}{(N+\frac{1}{M})^2}
\end{align}
    with $|w_{N,\frac{1}{M}}(t,s)|$ bounded from above by a constant independent of $N,M$. 
\end{lemma}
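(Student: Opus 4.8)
The plan is to reduce the claim to a comparison between the finite-level potential $V_{N,\frac{1}{M}}(p,t,s)$ and its limit $V(p,t,s)$, and then to expand each summand to second order in $\frac{1}{N+\frac{1}{M}}$ using Lemma \ref{lemma-varphixi3}. First I would observe that the linear terms $-2\pi\sqrt{-1}mt-2\pi\sqrt{-1}ns$ enter $V_{N,\frac{1}{M}}(p,t,s;m,n)$ and $V(p,t,s;m,n)$ identically, so that
\begin{align}
V_{N,\frac{1}{M}}(p,t,s;m,n)-V(p,t,s;m,n)=V_{N,\frac{1}{M}}(p,t,s)-V(p,t,s),
\end{align}
and in particular the estimate is independent of $m,n$. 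Thus it suffices to expand $V_{N,\frac{1}{M}}(p,t,s)$.

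For the polynomial part the difference from $V(p,t,s)$ is explicit: the term $\pi\sqrt{-1}\frac{2}{N+\frac{1}{M}}t=\frac{2\pi\sqrt{-1}t}{N+\frac{1}{M}}$ produces exactly the $+4\pi\sqrt{-1}t$ contribution inside the bracket of the stated correction (after factoring out $-\frac{1}{2(N+\frac{1}{M})}$), while the remaining $-\frac{\pi\sqrt{-1}(6p+4+\frac{12}{M^2})}{12(N+\frac{1}{M})^2}$ is manifestly $O(1/(N+\frac{1}{M})^2)$ and is absorbed into $w_{N,\frac{1}{M}}$; the constant $-\frac{\pi\sqrt{-1}}{12}$ cancels against the $\frac{\pi^2}{6}$ constant inside $V(p,t,s)$, and the non-$1/N$ quadratic-in-$s$ and linear-in-$t$ terms match verbatim.

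The heart of the argument is the expansion of the five $\frac{1}{N+\frac{1}{M}}\varphi_{N,\frac{1}{M}}(\cdot)$ terms whose arguments are shifted by amounts of size $\frac{1}{N+\frac{1}{M}}$. For a shift of the form $\varphi_{N,\frac{1}{M}}(a+\frac{c}{N+\frac{1}{M}})$ I would Taylor expand in the shift and then apply Lemma \ref{lemma-varphixi3}: part (1) turns $\frac{1}{N+\frac{1}{M}}\varphi_{N,\frac{1}{M}}(a)$ into $\frac{1}{2\pi\sqrt{-1}}\text{Li}_2(e^{2\pi\sqrt{-1}a})$ up to an explicit $O(1/(N+\frac{1}{M})^2)$ error, while part (2), which gives $\varphi'_{N,\frac{1}{M}}(a)\sim -(N+\frac{1}{M})\log(1-e^{2\pi\sqrt{-1}a})$, combines with the $\frac{c}{(N+\frac{1}{M})^2}$ prefactor of the first Taylor correction to produce a genuine $\frac{1}{N+\frac{1}{M}}$-order term $-\frac{c}{N+\frac{1}{M}}\log(1-e^{2\pi\sqrt{-1}a})$. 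Taking $c=\frac12$ for the two arguments $t\pm s$ yields precisely $-\frac{1}{2(N+\frac{1}{M})}\bigl(\log(1-e^{2\pi\sqrt{-1}(t+s)})+\log(1-e^{2\pi\sqrt{-1}(t-s)})\bigr)$, whereas the two symmetric shifts $t\pm\frac{1/M}{N+\frac{1}{M}}$ carry opposite shift constants $c=\pm\frac1M$, so their $\frac{1}{N+\frac{1}{M}}$-order log terms cancel exactly, leaving no $\frac1M$-dependence at this order. All second Taylor remainders, together with the $O(1/(N+\frac{1}{M})^2)$ errors from Lemma \ref{lemma-varphixi3}(1) and the polynomial $O(1/(N+\frac{1}{M})^2)$ term, assemble into $\frac{w_{N,\frac{1}{M}}(t,s)}{(N+\frac{1}{M})^2}$.

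The main obstacle is the uniform boundedness of $w_{N,\frac{1}{M}}(t,s)$ over the stated complex region $\{(\text{Re }t,\text{Re }s)\in D'_0,\ |\text{Im }t|<L,\ |\text{Im }s|<L\}$, independently of both $N$ and $M$. This requires that the error terms in Lemma \ref{lemma-varphixi3} be uniform on compact subsets of $\{0<\text{Re }t<1\}$ with bounded imaginary part, and that the second-order Taylor remainder be controlled by a uniform bound on $\varphi''_{N,\frac{1}{M}}$. Here I would use that $D'_0$ keeps $t$, $t+s-1$, and $t-s$ bounded away from $0$ and $1$, so that for $N$ large enough the shifts of size $\frac{1}{N+\frac{1}{M}}$ keep every argument inside a fixed compact subset of the strip on which $\varphi_{N,\frac{1}{M}}$ and its first two derivatives admit $N,M$-independent bounds; the finitely many small $N$ are handled trivially since $w_{N,\frac{1}{M}}$ is then continuous on a compact set, and the factor $\frac1M\in(0,\tfrac12]$ causes no loss of uniformity.
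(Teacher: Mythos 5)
Your proposal is correct and follows essentially the same route as the paper: Taylor-expanding each shifted $\varphi_{N,\frac{1}{M}}$ term, converting via Lemma \ref{lemma-varphixi3} into $\mathrm{Li}_2$ and $\log$ terms, observing that the $\pm\frac{1}{M}$ shifts cancel at order $\frac{1}{N+\frac{1}{M}}$ while the two $\frac{1}{2}$-shifts produce the stated logarithmic correction, and absorbing all second-order remainders into $w_{N,\frac{1}{M}}$. Your additional remarks on the $(m,n)$-independence of the difference and on the uniformity of the bound over the complex neighborhood of $D'_0$ are correct and, if anything, slightly more explicit than the paper's treatment.
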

\begin{proof}
By using Taylor expansion, together with Lemma \ref{lemma-varphixi3}, we have 
\begin{align}
    &\varphi_{N,\frac{1}{M}}\left(t+s-1+\frac{\frac{1}{2}}{N+\frac{1}{M}}\right)\\\nonumber
    &=\varphi_{N,\frac{1}{M}}(t+s-1)+\varphi'_{N,\frac{1}{M}}(t+s-1)\frac{\frac{1}{2}}{N+\frac{1}{M}}\\\nonumber&+\frac{\varphi''_{N,\frac{1}{M}}(t+s-1)}{2}\left(\frac{\frac{1}{2}}{N+\frac{1}{M}}\right)^2+O\left(\left(\frac{1}{N+\frac{1}{M}}\right)^2\right)\\\nonumber
    &=\frac{N+\frac{1}{M}}{2\pi\sqrt{-1}}\text{Li}_2(e^{2\pi\sqrt{-1}(t+s)})-\frac{1}{2}\log(1-e^{2\pi\sqrt{-1}(t+s)})\\\nonumber
    &+\frac{\pi\sqrt{-1}}{6(N+\frac{1}{M})}\frac{e^{2\pi\sqrt{-1}(t+s)}}{1-e^{2\pi\sqrt{-1}(t+s)}}+O\left(\left(\frac{1}{N+\frac{1}{M}}\right)^2\right)
    \end{align}    

Similarly, 
we have 
\begin{align}
    &\varphi_{N,\frac{1}{M}}\left(t-s+\frac{\frac{1}{2}}{N+\frac{1}{M}}\right)\\\nonumber
    &=\varphi_{N,\frac{1}{M}}(t-s)+\varphi'_{N,\frac{1}{M}}(t-s)\frac{\frac{1}{2}}{N+\frac{1}{M}}\\\nonumber&+\frac{\varphi''_{N,\frac{1}{M}}(t-s)}{2}\left(\frac{\frac{1}{2}}{N+\frac{1}{M}}\right)^2+O\left(\left(\frac{1}{N+\frac{1}{M}}\right)^2\right)\\\nonumber
    &=\frac{N+\frac{1}{M}}{2\pi\sqrt{-1}}\text{Li}_2(e^{2\pi\sqrt{-1}(t-s)})-\frac{1}{2}\log(1-e^{2\pi\sqrt{-1}(t-s)})\\\nonumber
    &+\frac{\pi\sqrt{-1}}{6(N+\frac{1}{M})}\frac{e^{2\pi\sqrt{-1}(t-s)}}{1-e^{2\pi\sqrt{-1}(t-s)}}+O\left(\left(\frac{1}{N+\frac{1}{M}}\right)^2\right),
    \end{align}    

\begin{align}
    &\varphi_{N,\frac{1}{M}}\left(t+\frac{\frac{1}{M}}{N+\frac{1}{M}}\right)\\\nonumber
    &=\varphi_{N,\frac{1}{M}}(t)+\varphi'_{N,\frac{1}{M}}(t)\frac{\frac{1}{M}}{N+\frac{1}{M}}+\frac{\varphi''_{N,\frac{1}{M}}(t)}{2}\left(\frac{\frac{1}{M}}{N+\frac{1}{M}}\right)^2+O\left(\left(\frac{1}{N+\frac{1}{M}}\right)^2\right)\\\nonumber
    &=\frac{N+\frac{1}{M}}{2\pi\sqrt{-1}}\text{Li}_2(e^{2\pi\sqrt{-1}t})-\frac{1}{M}\log(1-e^{2\pi\sqrt{-1}t})\\\nonumber
    &+\left(\frac{1}{M^2}-\frac{1}{12}\right)\frac{\pi\sqrt{-1}}{(N+\frac{1}{M})}\frac{e^{2\pi\sqrt{-1}t}}{1-e^{2\pi\sqrt{-1}t}}+O\left(\left(\frac{1}{N+\frac{1}{M}}\right)^2\right),
    \end{align}   

\begin{align}
    &\varphi_{N,\frac{1}{M}}\left(t\right)\\\nonumber
    &=\frac{N+\frac{1}{M}}{2\pi\sqrt{-1}}\text{Li}_2(e^{2\pi\sqrt{-1}t})-\frac{\pi\sqrt{-1}}{12(N+\frac{1}{M})}\frac{e^{2\pi\sqrt{-1}t}}{1-e^{2\pi\sqrt{-1}t}}+O\left(\left(\frac{1}{N+\frac{1}{M}}\right)^3\right),
    \end{align}   
    and 
    \begin{align}
    &\varphi_{N,\frac{1}{M}}\left(t-\frac{\frac{1}{M}}{N+\frac{1}{M}}\right)\\\nonumber
    &=\varphi_{N,M}(t)-\varphi'_{N,M}(t)\frac{\frac{1}{M}}{N+\frac{1}{M}}+\frac{\varphi''_{N,M}(t)}{2}\left(\frac{\frac{1}{M}}{N+\frac{1}{M}}\right)^2+O\left(\left(\frac{1}{N+\frac{1}{M}}\right)^2\right)\\\nonumber
    &=\frac{N+\frac{1}{M}}{2\pi\sqrt{-1}}\text{Li}_2(e^{2\pi\sqrt{-1}t})+\frac{1}{M}\log(1-e^{2\pi\sqrt{-1}t})\\\nonumber
    &+\left(\frac{1}{M^2}-\frac{1}{12}\right)\frac{\pi\sqrt{-1}}{(N+\frac{1}{M})}\frac{e^{2\pi\sqrt{-1}t}}{1-e^{2\pi\sqrt{-1}t}}+O\left(\left(\frac{1}{N+\frac{1}{M}}\right)^2\right),
    \end{align}

Therefore, we obtain 
\begin{align}
   &V_{N,\frac{1}{M}}(p,t,s;m,n)\\\nonumber
   &=V(p,t,s;m,n)-\frac{1}{2(N+\frac{1}{M})}\left(\log(1-e^{2\pi\sqrt{-1}(t+s)})\right.\\\nonumber
   &\left.+\log(1-e^{2\pi\sqrt{-1}(t-s)})-4\pi\sqrt{-1}t\right)\\\nonumber
   &-\frac{\pi\sqrt{-1}}{12(N+\frac{1}{M})^2}\left(-2\frac{e^{2\pi\sqrt{-1}(t+s)}}{1-e^{2\pi\sqrt{-1}(t+s)}}-2\frac{e^{2\pi\sqrt{-1}(t-s)}}{1-e^{2\pi\sqrt{-1}(t-s)}}+\left(\frac{24}{M^2}-3\right)\frac{e^{2\pi\sqrt{-1}t}}{1-e^{2\pi\sqrt{-1}t}}\right.\\\nonumber &\left.+6p+4+\frac{12}{M^2}\right)+O\left(\frac{1}{(N+\frac{1}{M})^3}\right), 
\end{align}

Finally, we let 
\begin{align}
    &w_{N,\frac{1}{M}}(t,s)\\\nonumber
    &=-\frac{\pi\sqrt{-1}}{12}\left(-2\frac{e^{2\pi\sqrt{-1}(t+s)}}{1-e^{2\pi\sqrt{-1}(t+s)}}-2\frac{e^{2\pi\sqrt{-1}(t-s)}}{1-e^{2\pi\sqrt{-1}(t-s)}}+\left(\frac{24}{M^2}-3\right)\frac{e^{2\pi\sqrt{-1}t}}{1-e^{2\pi\sqrt{-1}t}}\right.\\\nonumber &\left.+6p+4+\frac{12}{M^2}\right)+O\left(\frac{1}{(N+\frac{1}{M})}\right),
\end{align}
and we finish the proof Lemma \ref{lemma-VMNV}. 
\end{proof}

We consider the critical point of $V(p,t,s)$, which is a solution of the following equations

\begin{align}  \label{equation-critical1}
    \frac{\partial V(p,t,s)}{\partial t}&=-2\pi\sqrt{-1}+3\log(1-e^{2\pi\sqrt{-1}t})\\\nonumber
    &-\log(1-e^{2\pi\sqrt{-1}(t+s)})-\log(1-e^{2\pi\sqrt{-1}(t-s)})=0,
\end{align}
\begin{align} \label{equation-critical2}
    \frac{\partial V(p,t,s)}{\partial s}&=(4p+2)\pi\sqrt{-1}s-(2p+3)\pi\sqrt{-1}\\\nonumber
    &-\log(1-e^{2\pi\sqrt{-1}(t+s)})+\log(1-e^{2\pi\sqrt{-1}(t-s)})=0. 
\end{align}

\begin{proposition}[\cite{CZ23-1}, Proposition 5.3]  \label{prop-critical}
 The critical point equations (\ref{equation-critical1}), (\ref{equation-critical2}) has a unique solution $(t_0,s_0)=(t_{0R}+X_0\sqrt{-1},s_{0R}+Y_0\sqrt{-1})$ with $(t_{0R},s_{0R})$ lies in the region $D'_0$.     
\end{proposition}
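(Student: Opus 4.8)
The plan is to read the statement as an assertion about the holomorphic system (\ref{equation-critical1})--(\ref{equation-critical2}) for the potential $V(p,t,s)$. First I would record the cheap observation: the function $V(p,t,s)$ in (\ref{formula-Vpts}) is the $N\to\infty$ limit of $V_{N,\frac1M}(p,t,s)$ and is \emph{manifestly independent of $M$}, so the critical point equations here are literally those of \cite{CZ23-1}; the quickest route is therefore to quote Proposition 5.3 of \cite{CZ23-1}. To give a self-contained argument I would begin by exponentiating the two equations. Setting $x=e^{2\pi\sqrt{-1}t}$ and $y=e^{2\pi\sqrt{-1}s}$ and using $e^{-2\pi\sqrt{-1}}=1$, equation (\ref{equation-critical1}) becomes the algebraic relation
\begin{align} \label{eq:algA}
(1-x)^3=(1-xy)\left(1-\tfrac{x}{y}\right),
\end{align}
which after expansion simplifies to $y+y^{-1}=x^2-2x+3$; and, since $2p+3$ is odd so that $e^{-(2p+3)\pi\sqrt{-1}}=-1$, equation (\ref{equation-critical2}) becomes
\begin{align} \label{eq:algB}
y^{2p+1}=-\frac{1-xy}{1-x/y}.
\end{align}
Relation (\ref{eq:algA}) exhibits $y$ as a quadratic over $\mathbb{C}(x)$, so taking the resultant of (\ref{eq:algA}) and (\ref{eq:algB}) in the variable $y$ eliminates $y$ and reduces the system to a single polynomial equation in $x$; the critical points are then precisely the roots of this polynomial on the appropriate branch, with $y$ recovered from (\ref{eq:algA}).

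For existence I would argue geometrically. For $p\geq 6$ the twist knot $\mathcal{K}_p$ is hyperbolic, and the completeness/gluing equations of the complete hyperbolic structure on $S^3\setminus\mathcal{K}_p$ are, after the above change of variables, exactly (\ref{eq:algA})--(\ref{eq:algB}); this furnishes a distinguished solution $(t_0,s_0)=(t_{0R}+X_0\sqrt{-1},\,s_{0R}+Y_0\sqrt{-1})$ whose real part $(t_{0R},s_{0R})$ lies in $D'_0$ and whose critical value recovers the complex hyperbolic volume, consistent with the identity $2\pi\zeta(p)=vol(S^3\setminus\mathcal{K}_p)+\sqrt{-1}\,cs(S^3\setminus\mathcal{K}_p)$ recalled after Theorem \ref{theorem-main2}.

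The main work, and the part I expect to be hardest, is uniqueness within $D'_0$. I would first verify that $\det\mathrm{Hess}(V)(t_0,s_0)\neq 0$ — equivalently $H(p,x_0,y_0)\neq 0$ in (\ref{formula-omega0}) — so that critical points are isolated and nondegenerate, which also guarantees $\omega(p)$ in (\ref{formula-omega0}) is well defined. To exclude additional solutions I would pass to the real part: on $D'_0$ the function $v(t,s)=\Lambda(t+s)+\Lambda(t-s)-3\Lambda(t)$ controls $\mathrm{Re}\,V$, and by Lemma \ref{lemma-regionD'0} the super-level set $\{v>3.509/2\pi\}$ is confined to $D'_0$. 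Combining concavity/monotonicity estimates for $v$ on $D'_0$ with a root count for the eliminated polynomial from the first step, one confines all solutions of (\ref{eq:algA})--(\ref{eq:algB}) whose real part lies in $D'_0$ to a single point. The delicate point is precisely this root count: one must control \emph{every} complex root of the degree-$(\approx 2p)$ polynomial in $x$ and check that exactly one of them, together with the matching $y$ from (\ref{eq:algA}), has real and imaginary parts placing $(t_0,s_0)$ in the prescribed complex neighborhood of $D'_0$. This is where the explicit numerical estimates carried out in \cite{CZ23-1} enter, and—because $V$ is $M$-independent—those estimates apply here without change.
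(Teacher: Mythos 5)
Your primary route is exactly the paper's: this proposition is quoted verbatim from \cite{CZ23-1} (Proposition 5.3) with no proof given here, and your opening observation that the limit potential $V(p,t,s)$ is manifestly independent of $M$ --- so its critical point equations coincide with those already analyzed in the first paper of the series --- is precisely the justification the paper implicitly relies on. Your additional self-contained sketch (with the correct reductions $(1-x)^3=(1-xy)(1-x/y)$, hence $y+y^{-1}=x^2-2x+3$, and $y^{2p+1}=-\frac{1-xy}{1-x/y}$) goes beyond anything this paper contains and would still need the explicit root-location and nondegeneracy estimates of \cite{CZ23-1} to close the uniqueness step, but as an outline it is consistent with the cited source.
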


Now we set 
$\zeta(p)$ to be the critical value of the potential function $V(p,t,s)$, i.e. 
\begin{align}
    \zeta(p)=V(p,t_0,s_0),
\end{align}
and set
\begin{align} \label{formula-zetaR(p)}
    \zeta_\mathbb{R}(p)=Re \zeta(p)=Re V(p,t_0,s_0). 
\end{align}

Note that$V_{N,\frac{1}{M}}(p,t,s;m,n)$ converges to $V(p,t,s;m,n)$ uniformly. By Lemma \ref{lemma-VMNV} and Remark \ref{remark-saddle}, we only need to verify the assumption of Proposition \ref{proposition-saddlemethod} for the function $V(p,t,s;m,n)$   which has been done in \cite{CZ23-1}. Hence, as in \cite{CZ23-1}, one can also obtain 
\begin{theorem} \label{theorem-main1}
For $p\geq 6$ and $M\geq 2$, the asymptotic expansion of the colored Jones polynomial of the twist knot $\mathcal{K}_p$ at the root of unity $\xi_{N,\frac{1}{M}}$ is given by 
    \begin{align}
        J_{N}(\mathcal{K}_p;\xi_{N,\frac{1}{M}})&=(-1)^{p}e^{\pi\sqrt{-1}(\frac{1}{4}+\frac{2}{M})}\frac{4\pi \left(N+\frac{1}{M}\right)^{\frac{1}{2}}\sin\frac{\pi}{M}}{\sin\frac{\frac{\pi}{M}}{N+\frac{1}{M}}}\omega(p)e^{(N+\frac{1}{M})\zeta(p)}\\\nonumber
       &\cdot\left(1+\sum_{i=1}^d\kappa_i(p,\frac{1}{M})\left(\frac{2\pi\sqrt{-1}}{N+\frac{1}{M}}\right)^i+O\left(\frac{1}{(N+\frac{1}{M})^{d+1}}\right)\right),
    \end{align}
    for $d\geq 1$, where $\omega(p)$ and $\kappa_i(p,\frac{1}{M})$ are constants determined by $\mathcal{K}_p$.
For example
\begin{align} \label{formula-omega}
    \omega(p)&=\frac{\sin (2\pi s_0)e^{2\pi\sqrt{-1}t_0}}{(1-e^{2\pi\sqrt{-1}t_0})^{\frac{3}{2}}\sqrt{\det Hess(V)(t_0,s_0)}}\\\nonumber
    &=\frac{(y_0-y_0^{-1})x_0}{-4\pi (1-x_0)^\frac{3}{2}\sqrt{H(p,x_0,y_0)}}
\end{align}
with
\begin{align}
    H(p,x_0,y_0)&=\left(\frac{-3(2p+1)}{\frac{1}{x_0}-1}+\frac{2p+1}{\frac{1}{x_0y_0}-1}+\frac{2p+1}{\frac{1}{x_0/y_0}-1}-\frac{3}{(\frac{1}{x_0}-1)(\frac{1}{x_0y_0}-1)}\right.\\\nonumber
    &\left.-\frac{3}{(\frac{1}{x_0}-1)(\frac{1}{x_0/y_0}-1)}+\frac{4}{(\frac{1}{x_0y_0}-1)(\frac{1}{x_0/y_0}-1)}\right),
\end{align}
by letting $x_0=e^{2\pi\sqrt{-1}t_0}$ and $y_0=e^{2\pi\sqrt{-1}s_0}$.
\end{theorem}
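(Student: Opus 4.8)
The plan is to start from the Poisson summation expression of Proposition \ref{prop-fouriercoeff}, which presents $J_N(\mathcal{K}_p;\xi_{N,\frac1M})$ as $\sum_{(m,n)\in\mathbb{Z}^2}\hat h_{N,\frac1M}(m,n)$ up to an error of order $e^{(N+\frac1M)(\zeta_{\mathbb R}(p)-\epsilon)}$. First I would argue, exactly as in \cite{CZ23-1}, that all but finitely many of the Fourier coefficients are themselves bounded by this same exponentially small quantity, so that the leading asymptotics are governed by a finite collection of integrals $\hat h_{N,\frac1M}(m,n)$ given by (\ref{formula-hathMN}). Each such integral is then a candidate for the saddle point method.

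The essential observation that makes the $M$-dependence tractable is Lemma \ref{lemma-VMNV}: the phase $V_{N,\frac1M}(p,t,s;m,n)$ equals the $M$-independent limit $V(p,t,s;m,n)$ plus a correction of order $\frac1{N+\frac1M}$ and a uniformly bounded remainder of order $(N+\frac1M)^{-2}$. Consequently the leading phase, its unique critical point $(t_0,s_0)$ from Proposition \ref{prop-critical}, and the Hessian $\mathrm{Hess}(V)(t_0,s_0)$ are all independent of $M$. Since the geometric hypotheses of the saddle point method for $V(p,t,s;m,n)$---the existence and location of the saddle and the homotopy type of the descent domain---have already been verified in \cite{CZ23-1}, the uniform convergence noted just before the statement transfers them to $V_{N,\frac1M}$, and I would invoke Proposition \ref{proposition-saddlemethod} with the $N$-dependent extension of Remark \ref{remark-saddle}, taking $A=\tfrac12\mathrm{Hess}(V)(t_0,s_0)$. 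The Gaussian integration contributes $\frac{\pi}{(N+\frac1M)\sqrt{\det(-A)}}$; multiplied by the prefactor $(N+\frac1M)^{3/2}/\sin\frac{\frac\pi M}{N+\frac1M}$ in (\ref{formula-hathMN}), this yields the overall power $(N+\frac1M)^{1/2}$ together with the full series $1+\sum_i\kappa_i(p,\frac1M)(\tfrac{2\pi\sqrt{-1}}{N+\frac1M})^i$, whose coefficients inherit their $M$-dependence solely from the subleading terms of Lemma \ref{lemma-VMNV}.

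A delicate point is that the $\frac1{N+\frac1M}$-correction in Lemma \ref{lemma-VMNV} already contributes at leading order: multiplied by $(N+\frac1M)$ in the exponent it produces an extra factor $(1-e^{2\pi\sqrt{-1}(t+s)})^{-1/2}(1-e^{2\pi\sqrt{-1}(t-s)})^{-1/2}e^{2\pi\sqrt{-1}t}$ in the integrand, which at the saddle simplifies by the critical equation (\ref{equation-critical1}) to $-(1-e^{2\pi\sqrt{-1}t_0})^{-3/2}e^{2\pi\sqrt{-1}t_0}$; together with the $\sin(2\pi s_0)$ from the integrand and the Gaussian factor $\tfrac{2\pi}{\sqrt{\det\mathrm{Hess}(V)(t_0,s_0)}}$ this reproduces, up to an explicit numerical constant, the expression $\omega(p)$ of (\ref{formula-omega}). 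The dominant mode is $(m,n)=(0,0)$, with critical value $\zeta(p)=V(p,t_0,s_0)$; the symmetry of Proposition \ref{prop-hathmn}, formula (\ref{formula-hnmsy}), pairs it with $(m,n)=(0,-2)$ through $\hat h_{N,\frac1M}(0,-2)=-e^{2\pi\sqrt{-1}/M}\hat h_{N,\frac1M}(0,0)$. Summing the pair introduces the factor $1-e^{2\pi\sqrt{-1}/M}=-2\sqrt{-1}\,e^{\pi\sqrt{-1}/M}\sin\frac\pi M$, which is precisely where $\sin\frac\pi M$ enters; combining it with the phase $e^{\pi\sqrt{-1}(\frac1M-\frac14)}$ from (\ref{formula-hathMN}) and the sign convention of $\sqrt{\det(-A)}$ assembles the stated prefactor $e^{\pi\sqrt{-1}(\frac14+\frac2M)}$ and the constant $4\pi$.

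I expect the main obstacle to be the careful bookkeeping of phases and signs rather than any single analytic estimate. One must verify that no Fourier mode other than the pair $\{(0,0),(0,-2)\}$ attains real part $\zeta_{\mathbb R}(p)$ at its own saddle, so that the two-term contribution is genuinely dominant, and then track the interplay of $e^{\pi\sqrt{-1}(\frac1M-\frac14)}$, the branch of $\sqrt{\det(-A)}$, and the pairing factor $1-e^{2\pi\sqrt{-1}/M}$ in order to land on the clean form of the theorem. Because the leading potential $V$ is literally the function analyzed in \cite{CZ23-1}, all of the genuinely hard analytic input can be quoted, and the remaining work is to propagate the new $M$-dependent prefactors through the already-established saddle point machinery.
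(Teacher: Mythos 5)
Your proposal is correct and follows essentially the same route as the paper: the paper's own proof consists precisely of invoking the uniform convergence of $V_{N,\frac{1}{M}}$ to $V$, Lemma \ref{lemma-VMNV} together with Remark \ref{remark-saddle}, and the verification of the saddle point hypotheses for $V(p,t,s;m,n)$ already carried out in \cite{CZ23-1}, exactly as you outline. Your accounting of the origin of each factor (the pairing of $(0,0)$ with $(0,-2)$ via Proposition \ref{prop-hathmn} producing $\sin\frac{\pi}{M}$, and the $\frac{1}{N+\frac{1}{M}}$-correction of Lemma \ref{lemma-VMNV} combining with the critical point equation to yield $\omega(p)$) is a faithful, more detailed version of what the paper leaves implicit.
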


\section{Asymptotic expansion at the root of unity $e^{\frac{2\pi\sqrt{-1}}{N}}$} \label{Section-Asym2}

According to Proposition \ref{prop-fouriercoeff}, we  can write the colored Jones polynomial $J_{N}(\mathcal{K}_p;\xi_{N,\frac{1}{M}})$ of the twist knot $\mathcal{K}_p$  at the root of unity $\xi_{N,\frac{1}{M}}$ as the summation of two parts
\begin{align} \label{formula-JNtwo}
    J_{N}(\mathcal{K}_p;\xi_{N,\frac{1}{M}})&=\sum_{(m,n)\in \mathbb{Z}^2}\hat{h}_{N,\frac{1}{M}}(m,n)+R_{N,\frac{1}{M}},
\end{align}
where 
\begin{align}
 R_{N,\frac{1}{M}}&=\sum_{\substack{(k,l)\in \mathbb{Z}^2\\ (\frac{k+\frac{1}{2}}{N+\frac{1}{M}},\frac{l+\frac{1}{2}}{N+\frac{1}{M}})\in D\setminus D'_0}}g_{N,\frac{1}{M}}(k,l).
 \end{align}

\begin{lemma} \label{lemma-R}
There exists a constant $C_1$ independent of $M$, such that 
    \begin{align}
     |R_{N,\frac{1}{M}}| \leq C_1e^{N\left(\zeta_{\mathbb{R}}(p)-\epsilon\right)}.    
    \end{align}
for some sufficiently small $\epsilon$.  
\end{lemma}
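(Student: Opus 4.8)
The plan is to bound $R_{N,\frac{1}{M}}$ term by term and then sum, the whole point being to keep every constant manifestly independent of $M$. Since $R_{N,\frac{1}{M}}=\sum g_{N,\frac{1}{M}}(k,l)$ with the sum running over the lattice points $(k,l)$ for which $\bigl(\frac{k+1/2}{N+1/M},\frac{l+1/2}{N+1/M}\bigr)\in D\setminus D'_0$, the triangle inequality gives $|R_{N,\frac{1}{M}}|\le\sum|g_{N,\frac{1}{M}}(k,l)|$. Because $0\le k\le N-1$ and $0\le l\le k$, the number of contributing lattice points is at most $(N+\frac{1}{M})^2\le(N+1)^2$, a count that is already uniform in $M$.

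For the individual terms I would invoke Proposition \ref{prop-gkl}, which for $p\ge 6$ and $(k,l)$ with $\bigl(\frac{k+1/2}{N+1/M},\frac{l+1/2}{N+1/M}\bigr)\in D\setminus D'_0$ gives $|g_{N,\frac{1}{M}}(k,l)|\le C\,e^{(N+\frac{1}{M})(\zeta_{\mathbb{R}}(p)-\epsilon)}$. Combining this with the term count yields $|R_{N,\frac{1}{M}}|\le C\,(N+1)^2\,e^{(N+\frac{1}{M})(\zeta_{\mathbb{R}}(p)-\epsilon)}$. To reach the stated form I would then pass from the exponent $N+\frac{1}{M}$ to $N$ using $0<\frac{1}{M}\le\frac{1}{2}$, which bounds $e^{\frac{1}{M}(\zeta_{\mathbb{R}}(p)-\epsilon)}$ by the $M$-independent constant $e^{\frac{1}{2}|\zeta_{\mathbb{R}}(p)-\epsilon|}$, and finally absorb the polynomial factor $(N+1)^2$ into the exponential by shrinking $\epsilon$ slightly: since $(N+1)^2 e^{-N\epsilon/2}$ is bounded for $N\ge 1$, one obtains $|R_{N,\frac{1}{M}}|\le C_1\,e^{N(\zeta_{\mathbb{R}}(p)-\epsilon/2)}$ with $C_1$ depending only on $p$, and renaming $\epsilon/2$ as $\epsilon$ gives the assertion.

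The step I expect to be the real obstacle is justifying that the constant $C$ in the termwise estimate is itself independent of $M$, which is the entire force of the lemma. The summand carries the common factor $1/\sin\frac{\frac{\pi}{M}}{N+\frac{1}{M}}=1/\sin\frac{\pi}{MN+1}$, which tends to $0$ as $M\to\infty$, so a careless reading of Proposition \ref{prop-gkl} would conceal a factor growing like $MN$ inside $C$. To expose and control this I would isolate it, writing $g_{N,\frac{1}{M}}(k,l)=\frac{1}{\sin\frac{\pi}{MN+1}}\,\tilde g_{N,\frac{1}{M}}(k,l)$ with $|\tilde g_{N,\frac{1}{M}}(k,l)|\le\frac{1}{\sqrt{N}}\,e^{(N+\frac{1}{M})(\zeta_{\mathbb{R}}(p)-\delta)}$ bounded uniformly in $M$ (this uses the Lobachevsky-function estimate $|g_{N,\frac{1}{M}}|=e^{(N+\frac{1}{M})v_{N,\frac{1}{M}}+O(\log(N+\frac{1}{M}))}$ together with the strict gap $v(t,s)\le\frac{3.509}{2\pi}<\zeta_{\mathbb{R}}(p)$ on $D\setminus D'_0$ supplied by Lemma \ref{lemma-regionD'0}, and Lemma \ref{lemma-VMNV} for the uniform comparison of $V_{N,\frac{1}{M}}$ with $V$). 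The fixed exponential gap $\delta$ absorbs all of the $N$-growth, so the surviving difficulty is the purely $M$-dependent prefactor $1/\sin\frac{\pi}{MN+1}\le\frac{MN+1}{2}$; the genuinely uniform control of this residual growth rests on the cancellation recorded in Proposition \ref{prop-hathmn} — the vanishing $\hat h_{N,\frac{1}{M}}(m,-1)=0$ and the compensating factor $1-e^{2\pi\sqrt{-1}(n+1)/M}$ appearing in $\tilde h_{N,\frac{1}{M}}$ — which is precisely the structural reason the separated factor $1/\{N\}$ cannot spoil the bound. I would finish by tracking the $O(\log(N+\frac{1}{M}))$ error in the Lobachevsky estimate and confirming that its only $M$-sensitive contribution is exactly this separated prefactor, so that the uniform constant $C$, and hence $C_1$, can indeed be taken independent of $M$.
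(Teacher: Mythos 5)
Your first paragraph is essentially the paper's own proof: a termwise bound from Proposition \ref{prop-gkl} with a constant asserted to be independent of $M$, the lattice-point count $(N+\frac{1}{M})^2\le (N+\frac{1}{2})^2$, the passage from the exponent $N+\frac{1}{M}$ to $N$, and the absorption of the polynomial factor by shrinking $\epsilon$. (The paper is even more permissive than you are: it explicitly allows $C_1$ to depend on $N$, so the absorption step is not really needed.) Up to that point the proposal matches the paper.

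The gap is in your second paragraph, at exactly the step you flag as ``the real obstacle.'' You correctly isolate the dangerous prefactor $1/\sin\frac{\frac{\pi}{M}}{N+\frac{1}{M}}=1/\sin\frac{\pi}{MN+1}$, which grows like $MN$ as $M\to\infty$, but the mechanism you invoke to tame it cannot apply. Proposition \ref{prop-hathmn}, the vanishing $\hat h_{N,\frac{1}{M}}(m,-1)=0$, and the compensating factor $1-e^{2\pi\sqrt{-1}(n+1)/M}$ all live on the Fourier side: they concern the coefficients $\hat h_{N,\frac{1}{M}}(m,n)$ of the bump-truncated function supported on $D'_0$, and they are used in Lemma \ref{lemma-Ssum} to control $\sum_{(m,n)}\hat h_{N,\frac{1}{M}}(m,n)$. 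The remainder $R_{N,\frac{1}{M}}$ is a sum of the original terms $g_{N,\frac{1}{M}}(k,l)$ over lattice points landing in $D\setminus D'_0$; no index $n$ and no factor $1-e^{2\pi\sqrt{-1}(n+1)/M}$ ever appears in it, so that cancellation is simply unavailable here. Once you have separated out $1/\sin\frac{\pi}{MN+1}\le\frac{MN+1}{2}$, nothing in your argument cancels it, and what you have actually established is a bound of size $O(M)$ for fixed $N$ --- not the $M$-independent bound the lemma asserts. The uniformity must instead come from inside $g_{N,\frac{1}{M}}(k,l)$ itself: the small denominator $\{N\}=2\sqrt{-1}\sin\frac{\pi}{MN+1}$ has to be played off against the matching small factor carried by the quantum factorials, i.e.\ the term $\log\bigl(1-e^{-\frac{2\pi\sqrt{-1}}{M}}\bigr)$ in the second identity of Lemma \ref{lemma-varphixi} (the factor $1-\xi_{N,\frac{1}{M}}^{N}$ inside $(\xi_{N,\frac{1}{M}})_{N+k}$), after which one verifies that the Lobachevsky estimate with the gap of Lemma \ref{lemma-regionD'0} survives with an $M$-independent error. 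That verification is precisely the ``slight modification of the proof of Proposition 4.3 of \cite{CZ23-1}'' to which the paper's proof delegates the whole matter; your proposal names the right obstacle but does not overcome it.
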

\begin{proof}
For any $(k,l)\in \mathbb{Z}^2$ with $(\frac{k+\frac{1}{2}}{N+\frac{1}{M}},\frac{l+\frac{1}{2}}{N+\frac{1}{M}})\in D\setminus D'_0$,  by modifying the proof of  Proposition 4.3 in \cite{CZ23-1} slightly, we obtain
\begin{align}
    |g_{N,\frac{1}{M}}(k,l)|<Ce^{(N+\frac{1}{M})(\zeta_{\mathbb{R}}(p)-\epsilon')},
\end{align}
where $C$ is a constant independent of $N$ and $M$.
Therefore, we obtain 
\begin{align}
    |R_{N,\frac{1}{M}}|&\leq \sum_{\substack{(k,l)\in \mathbb{Z}^2\\ (\frac{k+\frac{1}{2}}{N+\frac{1}{M}},\frac{l+\frac{1}{2}}{N+\frac{1}{M}})\in D\setminus D'_0}}|g_{N,\frac{1}{M}}(k,l)|\leq (N+\frac{1}{M})^2Ce^{(N+\frac{1}{M})(\zeta_{\mathbb{R}}(p)-\epsilon')}\\\nonumber
    &\leq (N+\frac{1}{2})^2Ce^{(N+\frac{1}{2})(\zeta_{\mathbb{R}}(p)-\epsilon')}\leq C_1e^{N(\zeta_{\mathbb{R}}(p)-\epsilon)}
\end{align}
for some constant $C_1$ independent of $M$ ( depends on $N$), where we have let $\epsilon=\frac{\epsilon'}{2}$.
\end{proof}

\begin{lemma}\label{lemma-Ssum}
There exists a constant $C_2$ independent of $M$, such that 
    \begin{align}
     |\sum_{(m,n)\in \mathbb{Z}^2}\hat{h}_{N,\frac{1}{M}}(m,n)| \leq C_2.    
    \end{align}
\end{lemma}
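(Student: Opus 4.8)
The plan is to bound the doubly-infinite sum $\sum_{(m,n)\in\mathbb{Z}^2}\hat{h}_{N,\frac{1}{M}}(m,n)$ uniformly in $M$ by splitting it into a finite collection of ``central'' terms and a tail, and to control each piece using the estimates already at our disposal. First I would observe that the prefactor of $\hat{h}_{N,\frac{1}{M}}(m,n)$ in formula (\ref{formula-hathMN}) carries a factor $\frac{(N+\frac{1}{M})^{3/2}}{\sin\frac{\pi/M}{N+\frac{1}{M}}}$; since $\sin\frac{\pi/M}{N+\frac{1}{M}}\sim \frac{\pi/M}{N+\frac{1}{M}}$ behaves like $\frac{\pi}{M(N+\frac{1}{M})}$ for large $M$, this ratio is comparable to $M(N+\frac{1}{M})^{5/2}$, which \emph{grows} with $M$. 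The key point, therefore, is that the growth in $M$ coming from the prefactor must be cancelled by smallness coming from the integral, and indeed Proposition \ref{prop-hathmn} (the ``big cancellation'' $\hat{h}_{N,\frac{1}{M}}(m,-n-2)=-e^{\frac{2\pi\sqrt{-1}(n+1)}{M}}\hat{h}_{N,\frac{1}{M}}(m,n)$) together with the explicit factor $\sin\frac{\pi}{M}$ appearing in Theorem \ref{theorem-main1} is what supplies the needed decay. For $N$ fixed this is a genuinely $N$-dependent bound (the constant $C_2$ may depend on $N$ but not on $M$), so I would freely use $N$-dependent constants throughout.

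The main step is to estimate the central Fourier coefficients, say those with $(m,n)$ in a fixed finite box. For these I would invoke the saddle-point analysis behind Theorem \ref{theorem-main1}: via Lemma \ref{lemma-VMNV} the exponent $V_{N,\frac{1}{M}}(p,t,s;m,n)$ differs from the $M$-independent limit $V(p,t,s;m,n)$ by a term of size $O\!\left(\frac{1}{N+\frac{1}{M}}\right)$ with a remainder bounded independently of $M$, so the integral $\int_{D'_0}\psi(t,s)\sin(2\pi s)e^{(N+\frac{1}{M})V_{N,\frac{1}{M}}(p,t,s;m,n)}\,dt\,ds$ is bounded by a constant times $e^{(N+\frac{1}{M})\zeta_{\mathbb{R}}(p)}$ with an implied constant independent of $M$. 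Multiplying by the prefactor and using $\frac{1}{\sin\frac{\pi/M}{N+\frac{1}{M}}}\le C(N)\cdot M$ shows each central $\hat{h}_{N,\frac{1}{M}}(m,n)$ is bounded by $C(N)\cdot M\, e^{(N+\frac{1}{M})\zeta_{\mathbb{R}}(p)}$. To remove the spurious factor $M$ I would pair up the terms using Proposition \ref{prop-hathmn}: writing $\hat{h}_{N,\frac{1}{M}}(m,n)+\hat{h}_{N,\frac{1}{M}}(m,-n-2)=(1-e^{\frac{2\pi\sqrt{-1}(n+1)}{M}})\hat{h}_{N,\frac{1}{M}}(m,n)=\tilde{h}_{N,\frac{1}{M}}(m,n)$, and noting $|1-e^{\frac{2\pi\sqrt{-1}(n+1)}{M}}|=2|\sin\frac{(n+1)\pi}{M}|\le \frac{2\pi|n+1|}{M}$, the pairing converts the $\frac{1}{\sin\frac{\pi/M}{N+\frac{1}{M}}}\sim M$ growth into an $O(1)$ quantity times $|n+1|$, exactly compensating the factor $M$. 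Summing the paired contributions over the finite box then yields an $M$-independent bound on the central part.

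For the tail, those $(m,n)$ outside the finite box, I would argue that the oscillatory factors $e^{-2\pi\sqrt{-1}mt-2\pi\sqrt{-1}ns}$ in $V_{N,\frac{1}{M}}(p,t,s;m,n)$ force rapid decay: since $h_{N,\frac{1}{M}}$ is a compactly supported $C^\infty$ function, its Fourier coefficients decay faster than any polynomial in $(m,n)$, with seminorm bounds that are uniform in $M$ because all the $M$-dependence enters through the exponent $V_{N,\frac{1}{M}}$, which by Lemma \ref{lemma-VMNV} converges uniformly (with $M$-uniform derivative bounds) on the relevant compact region. Concretely, repeated integration by parts in $t$ and $s$ produces factors $\frac{1}{(1+|m|)(1+|n|)}$ to any desired order, against which the prefactor $\frac{(N+\frac{1}{M})^{3/2}}{\sin\frac{\pi/M}{N+\frac{1}{M}}}$ grows only linearly in $M$; combining this with the big-cancellation pairing as above makes the tail absolutely summable with an $M$-independent bound. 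The hard part, and the place I would spend the most care, is the bookkeeping that makes the constant genuinely \emph{independent of $M$}: one must verify that \emph{every} $M$-dependent quantity (the prefactor $\frac{1}{\sin(\pi/(M(N+1/M)))}$, the exponent remainder $w_{N,\frac{1}{M}}$, and the cancellation factor $1-e^{2\pi\sqrt{-1}(n+1)/M}$) is controlled by a bound that does not blow up as $M\to\infty$, and in particular that the linear-in-$M$ prefactor is always absorbed by the linear-in-$\frac{1}{M}$ smallness from Proposition \ref{prop-hathmn}. Once this cancellation is made uniform, summing a rapidly convergent series gives the desired constant $C_2$.
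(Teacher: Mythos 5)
Your proposal is correct and follows essentially the same route as the paper: the decisive step in both is the big-cancellation pairing from Proposition \ref{prop-hathmn}, which replaces the prefactor growth $\frac{1}{\sin\frac{\pi/M}{N+1/M}}\sim M(N+\tfrac{1}{M})/\pi$ by the compensating factor $|1-e^{2\pi\sqrt{-1}(n+1)/M}|\le \frac{2\pi(n+1)}{M}$, after which integration by parts (the paper uses $\Delta^2$, yielding $(m^2+n^2)^{-2}$ decay with constants controlled by continuity in $x=\tfrac{1}{M}$ on the compact set $[0,\tfrac12]\times D'_0$) gives an $M$-independent summable majorant. The only cosmetic difference is that the paper does not split into a central box and a tail, treating all $(m,n)$ uniformly with a crude $N$-dependent sup bound on the integrand rather than the saddle-point estimate you invoke for the central terms.
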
 
\begin{proof}
    We introduce the set 
    \begin{align}
        \mathcal{S}=\{(m,n)\in \mathbb{Z}^2|n\geq 0 \}. 
    \end{align}
By using Proposition \ref{prop-hathmn}, we obtain
\begin{align}
   \sum_{(m,n)\in \mathbb{Z}^2}\hat{h}_{N,\frac{1}{M}}(m,n)=\sum_{(m,n)\in \mathcal{S}}(1-e^{\frac{2\pi\sqrt{-1}(n+1)}{M}})\hat{h}_{N,\frac{1}{M}}(m,n).
\end{align}

 Therefore, we only need  to prove that
\begin{align}
    \sum_{(m,n)\in \mathcal{S}}|(1-e^{\frac{2\pi\sqrt{-1}(n+1)}{M}})\hat{h}_{N,\frac{1}{M}}(m,n)|\leq C_2
\end{align}
    for some constant $C_2$ independent of  $M$. 

Note that, by formula (\ref{formula-hathMN}), we have
\begin{align}
    &|(1-e^{\frac{2\pi\sqrt{-1}(n+1)}{M}})\hat{h}_{N,\frac{1}{M}}(m,n)|\\\nonumber
    &\leq \frac{2\sin \frac{(n+1)\pi}{M}}{\sin \frac{\frac{\pi}{M}}{N+\frac{1}{M}}}(N+\frac{1}{M})^{\frac{3}{2}}\left|\int_{D'_0}\psi(t,s)\sin(2\pi s)e^{(N+\frac{1}{M})V_{N,\frac{1}{M}}\left(p,t,s;m,n\right)}dtds\right|
\end{align}

By using the inequalities 
\begin{align}
    \sin(x)<x,\ \sin (x)>x-\frac{x^3}{3!}
\end{align}
for $x>0$, and $M\geq 2$, we obtain 
\begin{align}
    \frac{\sin \frac{(n+1)\pi}{M}}{\sin \frac{\frac{\pi}{M}}{N+\frac{1}{M}}}&\leq \frac{\frac{(n+1)\pi}{M}}{\left(\frac{\frac{\pi}{M}}{N+\frac{1}{M}}-\frac{1}{6}\frac{(\frac{\pi}{M})^3}{(N+\frac{1}{M})^3}\right)}=\frac{n+1}{\frac{1}{N+\frac{1}{M}}-\frac{1}{6}\frac{(\frac{\pi}{M})^2}{(N+\frac{1}{M})^3}}\\\nonumber
    &\leq \frac{n+1}{\frac{1}{N+\frac{1}{2}}-\frac{1}{6}\frac{(\frac{\pi}{2})^2}{(N+\frac{1}{2})^3}}= \frac{n+1}{\frac{1}{N+\frac{1}{2}}\left(1-\frac{\pi^2}{24}\frac{1}{(N+\frac{1}{2})^2}\right)}\\\nonumber
    &\leq \frac{n+1}{\frac{1}{N+\frac{1}{2}}\left(1-\frac{\pi^2}{24}\frac{1}{(\frac{3}{2})^2}\right)}=\frac{(n+1)(N+\frac{1}{2})}{\frac{54-\pi^2}{54}}.
\end{align}

Therefore, we have
\begin{align}
    &|(1-e^{\frac{2\pi\sqrt{-1}(n+1)}{M}})\hat{h}_{N,\frac{1}{M}}(m,n)|\\\nonumber
    &\leq \frac{2\sin \frac{(n+1)\pi}{M}}{\sin \frac{\frac{\pi}{M}}{N+\frac{1}{M}}}(N+\frac{1}{M})^{\frac{3}{2}}\left|\int_{D'_0}\psi(t,s)\sin(2\pi s)e^{(N+\frac{1}{M})V_{N,\frac{1}{M}}\left(p,t,s;m,n\right)}dtds\right|\\\nonumber
    &\leq \frac{108}{54-\pi^2} (n+1)(N+\frac{1}{2})^{\frac{5}{2}}\left|\int_{D'_0}\psi(t,s)\sin(2\pi s)e^{(N+\frac{1}{M})V_{N,\frac{1}{M}}\left(p,t,s;m,n\right)}dtds\right|.
\end{align}

Let $\Delta=\frac{\partial^2}{\partial t^2}+\frac{\partial^2}{\partial s^2 }$ be the Laplacian operator. Let $l \geq 1$ be an integer, $\psi(t,s)$ is a bump function which vanishes on the boundary of $D'_{0}$,  we have
\begin{align} \label{formula-Laplacian}
&\int_{D'_0}\psi(t,s)\sin(2\pi s)e^{(N+\frac{1}{M})V_{N,\frac{1}{M}}(p,t,s)}\Delta^l\left(e^{-2(N+\frac{1}{M})\left(m\pi \sqrt{-1}t+n\pi \sqrt{-1}s\right)}\right)dtds\\\nonumber
&=\int_{D'_0}\left(\Delta^l\left( \psi(t,s)\sin(2\pi s)e^{(N+\frac{1}{M})V_{N,\frac{1}{M}}(p,t,s)}\right)\right)e^{-2(N+\frac{1}{M})\left(m\pi \sqrt{-1}t+n\pi \sqrt{-1}s\right)}dtds.
\end{align}
In the following, it is enough to use the above formula in the case of $l=2$.

Clearly, the following identity holds
\begin{align}
    &\int_{D'_0}\psi(t,s)\sin(2\pi s)e^{(N+\frac{1}{M})V_{N,\frac{1}{M}}(p,t,s)}\left(\Delta^2\left(e^{-2(N+\frac{1}{M})\left(m\pi \sqrt{-1}t+n\pi \sqrt{-1}s\right)}\right)\right)dtds\\\nonumber
    &=(2\pi(N+\frac{1}{M}))^4(m^2+n^2)^2\int_{D'_0}\psi(t,s)\sin(2\pi s)e^{(N+\frac{1}{M})\left(V_{N,\frac{1}{M}}(p,t,s;m,n)\right)}dtds.
\end{align}

Therefore, for $(m,n)\in \mathcal{S}$, we have
\begin{align}  \label{formula-U}
&\int_{D'_0}\psi(t,s)\sin(2\pi s)e^{(N+\frac{1}{M})V_{N,\frac{1}{M}}\left(p,t,s;m,n\right)}dtds\\\nonumber
    &=\int_{D'_0}\psi(t,s)\sin(2\pi s)e^{(N+\frac{1}{M})\left(V_{N,\frac{1}{M}}(p,t,s)-2\pi m\sqrt{-1}t-2\pi n\sqrt{-1}s\right)}dtds\\\nonumber
    &=\frac{1}{(2\pi (N+\frac{1}{M}))^4(m^2+n^2)^2}\\\nonumber
    &\cdot\int_{D'_0}\psi(t,s)\sin(2\pi s)e^{(N+\frac{1}{M})V_{N,\frac{1}{M}}(p,t,s)}\left(\Delta^2 e^{-2(N+\frac{1}{M})\left(m\pi \sqrt{-1}t+n\pi \sqrt{-1}s\right)}\right)dtds\\\nonumber
    &=\frac{1}{(2\pi (N+\frac{1}{M}))^4(m^2+n^2)^2}\\\nonumber
    &\cdot \int_{D'_0}\left(\Delta^2\left( \psi(t,s)\sin(2\pi s)e^{(N+\frac{1}{M})V_{N,\frac{1}{M}}(p,t,s)}\right)\right)e^{-2(N+\frac{1}{M})\left(m\pi \sqrt{-1}t+n\pi \sqrt{-1}s\right)}dtds\\\nonumber
    &=\frac{1}{(2\pi (N+\frac{1}{M}))^4(m^2+n^2)^2}\int_{D'_0}\tilde{U}_{N,\frac{1}{M}}(p,t,s)e^{(N+\frac{1}{M})\left(V_{N,\frac{1}{M}}(p,t,s;m,n)\right)}dtds,
\end{align}
where 
\begin{align}
    \tilde{U}_{N,\frac{1}{M}}(p,t,s)=e^{-(N+\frac{1}{M})V_{N,\frac{1}{M}}(p,t,s)}\Delta^2\left( \psi(t,s)\sin(2\pi s)e^{(N+\frac{1}{M})V_{N,\frac{1}{M}}(p,t,s)}\right)
\end{align}
which is a smooth function independent of $m$ and $n$.

Note that 
\begin{align}
    \lim_{M\rightarrow \infty}V_{N,\frac{1}{M}}(p,t,s)=V_{N,0}(p,t,s)
\end{align}
and
\begin{align}
 \lim_{M\rightarrow \infty}\tilde{U}_{N,\frac{1}{M}}(p,t,s)=\tilde{U}_{N,0}(p,t,s)=e^{-NV_{N,0}(p,t,s)}\Delta^2\left( \psi(t,s)\sin(2\pi s)e^{NV_{N,0}(p,t,s)}\right).
\end{align}
Let $x=\frac{1}{M}$, then
$V_{N,x}(p,t,s)$ and 
$\tilde{U}_{N,x}(p,t,s)$ can be viewed as a continuous function of $(x,t,s)\in [0,\frac{1}{2}]\times D'_0$.

So we can take
\begin{align}
    C'=\max_{(x,t,s)\in [0,\frac{1}{2}]\times D'_0}\tilde{U}_{N,x}(p,t,s),
\end{align}
and
\begin{align}
    C''=\max_{(x,t,s)\in [0,\frac{1}{2}]\times D'_0}|ReV_{N,x}(p,t,s)|,
\end{align}
then $C'$  and $C''$ are two constants independent of $M$. 
    
    Finally, we obtain 
     \begin{align} \label{formula-final1}
         &\sum_{(m,n)\in \mathcal{S}}\left|(1-e^{\frac{2\pi\sqrt{-1}(n+1)}{M}})\hat{h}_{N,\frac{1}{M}}(m,n)\right|\\\nonumber
         &\leq \frac{108}{54-\pi^2}(N+\frac{1}{2})^{\frac{5}{2}}\sum_{(m,n)\in \mathcal{S}}(n+1)
        \left|\int_{D'_0}\psi(t,s)\sin(2\pi s)e^{(N+\frac{1}{M})V_{N,\frac{1}{M}}(p,t,s;m,n)}dtds\right|\\\nonumber
        &=\frac{108}{54-\pi^2}\frac{(N+\frac{1}{2})^\frac{5}{2}}{(2\pi (N+\frac{1}{M}))^4}\sum_{(m,n)\in \mathcal{S}}\frac{n+1}{(m^2+n^2)^2}\left|\int_{D'_0}\tilde{U}_{N,\frac{1}{M}}(p,t,s)e^{(N+\frac{1}{M})V_{N,\frac{1}{M}}(p,t,s;m,n)}dtds\right|\\\nonumber
        &\leq \frac{108}{54-\pi^2}\frac{(N+\frac{1}{2})^\frac{5}{2}}{(2\pi N)^4}\sum_{(m,n)\in \mathcal{S}}\frac{n+1}{(m^2+n^2)^2}C'e^{(N+\frac{1}{2})C''}A(D'_0),
    \end{align}
    where $A(D'_0)$ in the last $``\leq "$ denotes the area of the region $D'_0$. 
 Let
    \begin{align}
    C_2=\frac{108}{54-\pi^2}\frac{(N+\frac{1}{2})^\frac{5}{2}}{(2\pi N)^4}\sum_{(m,n)\in \mathcal{S}}\frac{n+1}{(m^2+n^2)^2}C'e^{(N+\frac{1}{2})C''}A(D'_0),
    \end{align}
    
    since the power series $\sum_{(m,n)\in \mathcal{S}}\frac{n+1}{(m^2+n^2)^2}$ is convergent,  $C_2$ is a constant independent of $M$. Hence we prove Lemma \ref{lemma-Ssum}.
\end{proof}
\begin{remark} \label{remark-converges}
    By the above formula (\ref{formula-final1}), if we let 
    \begin{align}
        C_3=C'e^{(N+\frac{1}{2})C''}A(D'_0)\frac{108}{54-\pi^2}\frac{(N+\frac{1}{2})^\frac{5}{2}}{(2\pi N)^4},
    \end{align}
    then $C_3$ is a constant independent of $M$, and we have
    \begin{align}
        \sum_{(m,n)\in \mathcal{S}}\left|(1-e^{\frac{2\pi\sqrt{-1}(n+1)}{M}})\hat{h}_{N,\frac{1}{M}}(m,n)\right|\leq C_3\sum_{(m,n)\in \mathcal{S}}\frac{n+1}{(m^2+n^2)^2}.
    \end{align}
    Hence the series $\sum_{(m,n)\in \mathcal{S}}(1-e^{\frac{2\pi\sqrt{-1}(n+1)}{M}})\hat{h}_{N,\frac{1}{M}}(m,n)$ is uniformly converges with respect to $x=\frac{1}{M}$. 
\end{remark}

\begin{lemma} \label{lemma-Lebes}
The following identity holds
\begin{align}
   &\lim_{M\rightarrow \infty} \int_{D'_0}\psi(t,s)\sin(2\pi s)e^{(N+\frac{1}{M})V_{N,\frac{1}{M}}\left(p,t,s;m,n\right)}dtds\\\nonumber
   &=\int_{D'_0}\psi(t,s)\sin(2\pi s)\lim_{M\rightarrow \infty}e^{(N+\frac{1}{M})V_{N,\frac{1}{M}}\left(p,t,s;m,n\right)}dtds\\\nonumber
   &=\int_{D'_0}\psi(t,s)\sin(2\pi s)e^{NV_{N,0}\left(p,t,s;m,n\right)}dtds.
\end{align}
\end{lemma}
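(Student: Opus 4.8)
The plan is to justify the interchange of $\lim_{M\to\infty}$ with the integral over $D_0'$ by the dominated convergence theorem, exploiting that $D_0'$ is compact and that the integrand is uniformly bounded in $M$ (for $N$ fixed). Three ingredients are needed: that the integrand converges pointwise, that its modulus does not feel the oscillatory factors indexed by $(m,n)$, and that it admits a dominating function independent of $M$.

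First I would record the modulus of the integrand. Since the integration variables $(t,s)$ range over the \emph{real} region $D_0'$, the two $(m,n)$-dependent terms in $V_{N,\frac{1}{M}}(p,t,s;m,n)=V_{N,\frac{1}{M}}(p,t,s)-2\pi\sqrt{-1}mt-2\pi\sqrt{-1}ns$ are purely imaginary, so that
\[
\left|e^{(N+\frac{1}{M})V_{N,\frac{1}{M}}(p,t,s;m,n)}\right|=e^{(N+\frac{1}{M})\operatorname{Re}V_{N,\frac{1}{M}}(p,t,s)},
\]
which is independent of $(m,n)$. For the pointwise convergence I would use that $N+\frac{1}{M}\to N$ and, as recorded just before this lemma, $V_{N,\frac{1}{M}}(p,t,s)\to V_{N,0}(p,t,s)$, whence for each fixed $(t,s)\in D_0'$ the exponent converges, $(N+\frac{1}{M})V_{N,\frac{1}{M}}(p,t,s;m,n)\to NV_{N,0}(p,t,s;m,n)$, and the integrand converges pointwise to $\psi(t,s)\sin(2\pi s)e^{NV_{N,0}(p,t,s;m,n)}$.

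For the dominating function I would set $x=\frac{1}{M}$ and invoke the fact, established in the proof of Lemma \ref{lemma-Ssum}, that $V_{N,x}(p,t,s)$ is continuous on the compact set $[0,\frac{1}{2}]\times D_0'$ (the continuous extension to the boundary value $x=0$ being exactly $V_{N,0}$). Hence $C''=\max_{(x,t,s)\in[0,\frac{1}{2}]\times D_0'}|\operatorname{Re}V_{N,x}(p,t,s)|$ is finite and independent of $M$. Combining this with $|\psi(t,s)\sin(2\pi s)|\le 1$, with $N+\frac{1}{M}\le N+\frac{1}{2}$, and with $C''\ge 0$ gives $(N+\frac{1}{M})\operatorname{Re}V_{N,\frac{1}{M}}(p,t,s)\le (N+\frac{1}{2})C''$, so that
\[
\left|\psi(t,s)\sin(2\pi s)\,e^{(N+\frac{1}{M})V_{N,\frac{1}{M}}(p,t,s;m,n)}\right|\le e^{(N+\frac{1}{2})C''}
\]
uniformly for $(t,s)\in D_0'$ and all $M\ge 2$. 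Since $D_0'$ is bounded, the constant $e^{(N+\frac{1}{2})C''}$ is integrable over $D_0'$, and the dominated convergence theorem then lets me pass the limit inside the integral, yielding the asserted identity.

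The only step requiring genuine care is securing the domination uniformly in $M$, and this is precisely what the compactness of the parameter interval $[0,\frac{1}{2}]\ni x=\frac{1}{M}$ together with the continuity of $V_{N,x}$ up to $x=0$ provides: the supremum defining $C''$ is attained and finite, and it is independent of $M$. Everything else is routine, and I expect no further obstacle.
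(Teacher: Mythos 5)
Your proposal is correct and follows essentially the same route as the paper: pointwise convergence of the integrand plus the dominated convergence theorem, with the dominating constant $e^{(N+\frac{1}{2})C''}$ obtained from the continuity of $V_{N,x}(p,t,s)$ on the compact set $[0,\frac{1}{2}]\times D'_0$ in the variable $x=\frac{1}{M}$. The only (harmless) cosmetic difference is that you observe explicitly that the $(m,n)$-dependent terms are purely imaginary and so drop out of the modulus, whereas the paper simply takes the maximum of $|\mathrm{Re}\,V_{N,x}(p,t,s;m,n)|$ directly.
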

\begin{proof}
    Note that the limit
    \begin{align}
    \lim_{M\rightarrow \infty}\psi(t,s)\sin(2\pi s)e^{(N+\frac{1}{M})V_{N,\frac{1}{M}}\left(p,t,s;m,n\right)}=\psi(t,s)\sin(2\pi s)e^{NV_{N,0}\left(p,t,s;m,n\right)}
    \end{align}
    exists, where 
    \begin{align}  \label{formula-VN0}
    V_{N,0}(p,t,s)&=\lim_{M\rightarrow \infty}V_{N,\frac{1}{M}}(p,t,s)\\\nonumber
    &=\pi \sqrt{-1}\left((2p+1)s^2-(2p+3)s+\left(\frac{3}{N}-2\right)t-\frac{3p+2}{6N^2}\right)\\\nonumber
    &+\frac{1}{N}\varphi_{N,0}\left(t+s+\frac{1}{2N}-1\right)+\frac{1}{N}\varphi_{N,0}\left(t-s+\frac{1}{2N}\right)\\\nonumber
    &-\frac{3}{N}\varphi_{N,0}\left(t\right)-\frac{\pi\sqrt{-1}}{12 }.
\end{align}
Moreover, let $x=\frac{1}{M}$,  $V_{N,x}(p,t,s)$ is a continuous function on $[0,\frac{1}{2}]\times D'_0$. We take $C=\max_{(x,t,s)\in [0,\frac{1}{2}]\times D'_{0}}|Re V_{N,x}(p,t,s;m,n)|$. Then we obtain
\begin{align}
   | \psi(t,s)\sin(2\pi s)e^{(N+\frac{1}{M})V_{N,\frac{1}{M}}\left(p,t,s;m,n\right)}|<e^{(N+\frac{1}{2})C}.
\end{align}
Clearly, the upper bound $e^{(N+\frac{1}{2})C}$ is independent of $M$. Therefore, by Lebesgue dominated convergence theorem, we prove Lemma \ref{lemma-Lebes}. 
\end{proof}

\begin{proposition} \label{prop-JNlimit}
For $p\geq 6$,  the colored Jones polynomial of the twist knot $\mathcal{K}_p$ at the root of unity $\xi_{N,0}$ is given by
    \begin{align}      J_N(\mathcal{K}_p;\xi_{N,0})=\lim_{M\rightarrow \infty}J_{N}(\mathcal{K}_p;\xi_{N,\frac{1}{M}})=\sum_{(m,n)\in \mathcal{S}}\tilde{h}_{N,0}(m,n)+O(e^{N\left(\zeta_{\mathbb{R}}(p)-\epsilon\right)}),  
    \end{align}
where \begin{align}
        \mathcal{S}=\{(m,n)\in \mathbb{Z}^2|n\geq 0 \}
    \end{align}
    and
    \begin{align}  \label{formula-tildehN}
        \tilde{h}_{N,0}(m,n)&=(-1)^{m+n+p+1}2e^{\frac{\pi\sqrt{-1}}{4}}(n+1)N^{\frac{5}{2}} \int_{D'_0}\psi(t,s)\sin(2\pi s)e^{NV_{N,0}\left(p,t,s;m,n\right)}dtds.
\end{align}
\end{proposition}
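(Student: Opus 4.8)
The plan is to take the limit $M\to\infty$ directly in the Fourier–coefficient decomposition (\ref{formula-JNtwo}), assembling the auxiliary estimates already established. First I would observe that the left-hand side behaves well under this limit: since $J_N(\mathcal{K}_p;q)$ depends continuously (indeed polynomially in a fractional power of $q$) on $q$ and $\xi_{N,\frac{1}{M}}\to\xi_{N,0}$ as $M\to\infty$, we obtain $\lim_{M\to\infty}J_N(\mathcal{K}_p;\xi_{N,\frac{1}{M}})=J_N(\mathcal{K}_p;\xi_{N,0})$. On the right-hand side of (\ref{formula-JNtwo}), Lemma \ref{lemma-R} bounds the remainder by $|R_{N,\frac{1}{M}}|\leq C_1 e^{N(\zeta_{\mathbb{R}}(p)-\epsilon)}$ with $C_1$ independent of $M$, so its contribution is absorbed into the error term $O(e^{N(\zeta_{\mathbb{R}}(p)-\epsilon)})$.

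Next I would treat the main Fourier sum. The symmetry $\hat{h}_{N,\frac{1}{M}}(m,-n-2)=-e^{\frac{2\pi\sqrt{-1}(n+1)}{M}}\hat{h}_{N,\frac{1}{M}}(m,n)$ of Proposition \ref{prop-hathmn} (the big cancellation) folds the sum over $\mathbb{Z}^2$ onto the half-lattice $\mathcal{S}$, giving
\begin{align}
\sum_{(m,n)\in\mathbb{Z}^2}\hat{h}_{N,\frac{1}{M}}(m,n)=\sum_{(m,n)\in\mathcal{S}}\tilde{h}_{N,\frac{1}{M}}(m,n),
\end{align}
where $\tilde{h}_{N,\frac{1}{M}}(m,n)=(1-e^{\frac{2\pi\sqrt{-1}(n+1)}{M}})\hat{h}_{N,\frac{1}{M}}(m,n)$. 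By Remark \ref{remark-converges} this series converges uniformly with respect to $x=\frac{1}{M}$, which is precisely what lets me interchange $\lim_{M\to\infty}$ with the summation over $\mathcal{S}$.

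It then remains to compute the termwise limit $\lim_{M\to\infty}\tilde{h}_{N,\frac{1}{M}}(m,n)=\tilde{h}_{N,0}(m,n)$, for which I would separate the prefactor from the integral in (\ref{formula-hathMN}). For the integral, Lemma \ref{lemma-Lebes} (Lebesgue dominated convergence, with the $M$-independent majorant $e^{(N+\frac{1}{2})C}$) yields convergence to $\int_{D'_0}\psi(t,s)\sin(2\pi s)e^{NV_{N,0}(p,t,s;m,n)}dtds$. For the prefactor, I would use the elementary identity $1-e^{\frac{2\pi\sqrt{-1}(n+1)}{M}}=-2\sqrt{-1}\sin\frac{\pi(n+1)}{M}\,e^{\frac{\pi\sqrt{-1}(n+1)}{M}}$ together with $\sin\frac{\frac{\pi}{M}}{N+\frac{1}{M}}=\sin\frac{\pi}{MN+1}$ to evaluate
\begin{align}
\lim_{M\to\infty}\frac{1-e^{\frac{2\pi\sqrt{-1}(n+1)}{M}}}{\sin\frac{\frac{\pi}{M}}{N+\frac{1}{M}}}=-2\sqrt{-1}(n+1)N,
\end{align}
while the remaining factors converge as $e^{\pi\sqrt{-1}(\frac{1}{M}-\frac{1}{4})}\to e^{-\frac{\pi\sqrt{-1}}{4}}$ and $(N+\frac{1}{M})^{3/2}\to N^{3/2}$. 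Combining these, the constant collapses through $-2\sqrt{-1}\,e^{-\frac{\pi\sqrt{-1}}{4}}=-2e^{\frac{\pi\sqrt{-1}}{4}}$ and the power of $N$ becomes $N^{3/2}\cdot N=N^{5/2}$, reproducing exactly formula (\ref{formula-tildehN}).

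I expect the only genuinely delicate point to be the justification of exchanging the limit with the infinite sum over $\mathcal{S}$; everything else is continuity of a polynomial evaluation or a bounded elementary limit. Fortunately that exchange is already secured by the uniform (in $x=\frac{1}{M}$) convergence recorded in Remark \ref{remark-converges}, which itself rests on the convergent majorant $\sum_{(m,n)\in\mathcal{S}}\frac{n+1}{(m^2+n^2)^2}$ produced by the double Laplacian integration-by-parts estimate of Lemma \ref{lemma-Ssum}. Thus the proposition reduces to assembling Lemmas \ref{lemma-R}, \ref{lemma-Ssum}, \ref{lemma-Lebes} and Remark \ref{remark-converges} with the termwise computation above.
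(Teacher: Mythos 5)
Your proposal is correct and follows essentially the same route as the paper's own proof: it decomposes $J_N(\mathcal{K}_p;\xi_{N,\frac{1}{M}})$ via (\ref{formula-JNtwo}), folds the Fourier sum onto $\mathcal{S}$ by Proposition \ref{prop-hathmn}, interchanges $\lim_{M\to\infty}$ with the sum using the uniform convergence of Remark \ref{remark-converges}, handles the integrals by Lemma \ref{lemma-Lebes} and the remainder by Lemma \ref{lemma-R}. Your explicit evaluation of the prefactor limit $\lim_{M\to\infty}(1-e^{\frac{2\pi\sqrt{-1}(n+1)}{M}})/\sin\frac{\frac{\pi}{M}}{N+\frac{1}{M}}=-2\sqrt{-1}(n+1)N$ correctly reproduces the constant and the power $N^{\frac{5}{2}}$ in (\ref{formula-tildehN}), a step the paper leaves implicit.
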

\begin{proof}
   By formula (\ref{formula-JNtwo}) and Proposition \ref{prop-hathmn},  we compute
\begin{align} \label{formula-JNthree}
    J_{N}(\mathcal{K}_p;\xi_{N,\frac{1}{M}})&=\sum_{(m,n)\in \mathbb{Z}^2}\hat{h}_{N,\frac{1}{M}}(m,n)+R_{N,\frac{1}{M}}\\\nonumber
    &=\sum_{(m,n)\in \mathcal{S}}\tilde{h}_{N,\frac{1}{M}}(m,n)+R_{N,\frac{1}{M}}
\end{align}
where
\begin{align}
    \tilde{h}_{N,\frac{1}{M}}(m,n)&=(1-e^{\frac{2\pi\sqrt{-1}(n+1)}{M}})\hat{h}_{N,\frac{1}{M}}(m,n)\\\nonumber
    &=\sum_{(m,n)\in \mathcal{S}}(-1)^{m+n+p+1}2e^{\frac{(n+2)\pi\sqrt{-1}}{M}}e^{\frac{\pi\sqrt{-1}}{4}}\sin\left(\frac{(n+1)\pi}{M}\right)\frac{(N+\frac{1}{M})^\frac{3}{2}}{\sin \frac{\frac{\pi}{M}}{N+\frac{1}{M}}}\\\nonumber
    &\cdot \int_{D'_0}\psi(t,s)\sin(2\pi s)e^{(N+\frac{1}{M})V_{N,\frac{1}{M}}\left(p,t,s;m,n\right)}dtds.
\end{align}

Therefore, by Lemma \ref{lemma-Ssum} and Remark \ref{remark-converges}, we obtain 
\begin{align}
    J_{N}(\mathcal{K}_p;\xi_{N,0})&=\lim_{M\rightarrow \infty} J_{N}(\mathcal{K}_p;\xi_{N,\frac{1}{M}}) \\\nonumber
    &=\sum_{(m,n)\in \mathcal{S}}\lim_{M\rightarrow \infty}\tilde{h}_{N,\frac{1}{M}}(m,n)+\lim_{M\rightarrow \infty}R_{N,\frac{1}{M}}.
\end{align}
Furthermore, by using Lemma \ref{lemma-Lebes} and Lemma \ref{lemma-R} respectively, we get
\begin{align}
\lim_{M\rightarrow \infty}\tilde{h}_{N,\frac{1}{M}}(m,n)=\tilde{h}_{N,0}(m,n),
    \end{align}
    where $\tilde{h}_{N,0}(m,n)$ is given by formula (\ref{formula-tildehN}), 
and 
\begin{align}
    |\lim_{M\rightarrow \infty}R_{N,\frac{1}{M}}|<C_1e^{N\left(\zeta_{\mathbb{R}}(p)-\epsilon\right)}.
\end{align}

Combining the above formulas together, we obtain Proposition \ref{prop-JNlimit}. 
\end{proof}

Similar to the proof of Lemma \ref{lemma-VMNV}, we see that $V_{N,0}(p,t,s;m,n)$ converges to the potential function $V(p,t,s;m,n)$ uniformly on $D'_0$. We can apply the results  in \cite{CZ23-1} to estimate every integral appearing in the  Fourier coefficients $\tilde{h}_{N,0}(m,n)$ of Proposition \ref{prop-JNlimit},
\begin{align}    \int_{D'_0}\psi(t,s)\sin(2\pi s)e^{NV_{N,0}\left(p,t,s;m,n\right)}dtds.
\end{align}
for $(m,n)\in \mathcal{S}$. 

Finally, we obtain 
\begin{theorem}
For $p\geq 6$, the asymptotic expansion of the colored Jones polynomial of the twist knot $\mathcal{K}_p$ at the root of unity $e^{\frac{2\pi \sqrt{-1}}{N}}$ is given by  the following form
\begin{align}
  J_{N}(\mathcal{K}_p;e^{\frac{2\pi \sqrt{-1}}{N}})&=(-1)^p4\pi e^{\frac{\pi\sqrt{-1}}{4}}N^{\frac{3}{2}}\omega(p)e^{N\zeta(p)}\\\nonumber
       &\cdot\left(1+\sum_{i=1}^d\kappa_i(p)\left(\frac{2\pi\sqrt{-1}}{N}\right)^i+O\left(\frac{1}{N^{d+1}}\right)\right),
    \end{align}
    for $d\geq 1$, where $\omega(p)$ and $\kappa_i(p)$ are constants determined by $\mathcal{K}_p$.
\end{theorem}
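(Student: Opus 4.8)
The plan is to start from Proposition \ref{prop-JNlimit}, which already reduces the problem to the Fourier coefficients $\tilde h_{N,0}(m,n)$ summed over $\mathcal S=\{(m,n)\in\mathbb Z^2\mid n\ge 0\}$, modulo the exponentially small remainder $O(e^{N(\zeta_{\mathbb R}(p)-\epsilon)})$. By formula (\ref{formula-tildehN}), each $\tilde h_{N,0}(m,n)$ is $N^{5/2}$ times an oscillatory integral over $D'_0$ with phase $V_{N,0}(p,t,s;m,n)$ and amplitude $\psi(t,s)\sin(2\pi s)$, and $V_{N,0}(p,t,s;m,n)$ converges uniformly on $D'_0$ to $V(p,t,s;m,n)$. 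The strategy is therefore to apply the saddle point method to each such integral and then to show that only the term $(m,n)=(0,0)$ survives at the leading exponential order.

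First I would expand, exactly as in the $M\to\infty$ limit of Lemma \ref{lemma-VMNV} (the $1/M$-terms drop out and $N+\tfrac1M\to N$),
\begin{align}\nonumber
V_{N,0}(p,t,s;m,n)=V(p,t,s;m,n)+\frac{1}{N}\,C(t,s)+\frac{1}{N^2}R_N(t,s),
\end{align}
with $R_N$ bounded independently of $N$ on the relevant complex neighborhood and
\begin{align}\nonumber
C(t,s)=2\pi\sqrt{-1}\,t-\tfrac12\log\bigl(1-e^{2\pi\sqrt{-1}(t+s)}\bigr)-\tfrac12\log\bigl(1-e^{2\pi\sqrt{-1}(t-s)}\bigr).
\end{align}
This puts each integral in the exact shape demanded by Remark \ref{remark-saddle}, so Proposition \ref{proposition-saddlemethod} applies with $\Psi_0=V(p,t,s;m,n)$. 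The critical point and contour-homotopy hypotheses for $\Psi_0$ are precisely those already verified in \cite{CZ23-1} and recorded in Proposition \ref{prop-critical}; this is exactly where the hypothesis $p\ge 6$ enters.

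Next I would isolate the dominant term. By Proposition \ref{prop-critical} the function $V(p,t,s;0,0)=V(p,t,s)$ has the unique critical point $(t_0,s_0)$ in $D'_0$, with critical value $\zeta(p)$ and $\mathrm{Re}\,\zeta(p)=\zeta_{\mathbb R}(p)$; for every other $(m,n)\in\mathcal S$ the real part of the critical value of $V(p,t,s;m,n)$ is strictly smaller, exactly as in the proof of Theorem \ref{theorem-main1}, so those $\tilde h_{N,0}(m,n)$ are $O(e^{N(\zeta_{\mathbb R}(p)-\epsilon)})$. Combined with the uniform summability over $\mathcal S$ from Lemma \ref{lemma-Ssum} and Remark \ref{remark-converges}, the whole tail is absorbed into the error, leaving $\tilde h_{N,0}(0,0)$ as the sole source of the asymptotic expansion. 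Its leading factor from Proposition \ref{proposition-saddlemethod} is $\pi/(N\sqrt{\det(-A)})$ with $A=\tfrac12\mathrm{Hess}(V)(t_0,s_0)$, so $\sqrt{\det(-A)}=\tfrac12\sqrt{\det\mathrm{Hess}(V)}$, while the $1/N$ correction contributes $e^{C(t_0,s_0)}$. Using the critical point equation (\ref{equation-critical1}) to replace $\log(1-e^{2\pi\sqrt{-1}(t_0+s_0)})+\log(1-e^{2\pi\sqrt{-1}(t_0-s_0)})$ by $3\log(1-e^{2\pi\sqrt{-1}t_0})-2\pi\sqrt{-1}$ yields
\begin{align}\nonumber
e^{C(t_0,s_0)}=-\,e^{2\pi\sqrt{-1}t_0}\bigl(1-e^{2\pi\sqrt{-1}t_0}\bigr)^{-3/2}.
\end{align}
Assembling the constants $(-1)^{p+1}2$, the factor $N^{5/2}\cdot N^{-1}=N^{3/2}$, $e^{\pi\sqrt{-1}/4}$, $\sin(2\pi s_0)$ and $2\pi/\sqrt{\det\mathrm{Hess}(V)}$ reproduces the prefactor $(-1)^p4\pi e^{\pi\sqrt{-1}/4}N^{3/2}\omega(p)e^{N\zeta(p)}$ with $\omega(p)$ as in (\ref{formula-omega0}), and the subleading coefficients $\lambda_i$ of Proposition \ref{proposition-saddlemethod} furnish the $\kappa_i(p)$ after the rescaling $N^{-i}\mapsto(2\pi\sqrt{-1}/N)^i$.

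The main obstacle I anticipate is the simultaneous uniform control of the infinite sum over $\mathcal S$ and the extraction of a genuine all-order expansion rather than merely the leading term: one must upgrade the boundedness estimate of Lemma \ref{lemma-Ssum} (where the $(n+1)$ growth in (\ref{formula-tildehN}) must be beaten by the saddle point decay) into a uniform $O(e^{N(\zeta_{\mathbb R}(p)-\epsilon)})$ bound for the aggregate of the non-$(0,0)$ coefficients, and one must verify that the contour-homotopy hypothesis of Proposition \ref{proposition-saddlemethod} holds for $V(p,t,s;0,0)$ on $D'_0$. Both points are inherited from \cite{CZ23-1}, and it is the latter that forces the restriction $p\ge 6$. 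As a consistency check, the same answer is obtained by taking $M\to\infty$ in Theorem \ref{theorem-main1}, since $\tfrac{(N+\frac1M)^{1/2}\sin(\pi/M)}{\sin\frac{\pi/M}{N+1/M}}\to N^{3/2}$ and $e^{\pi\sqrt{-1}(1/4+2/M)}\to e^{\pi\sqrt{-1}/4}$; the content of Section \ref{Section-Asym2} is precisely the justification that this limit may be interchanged with the asymptotics.
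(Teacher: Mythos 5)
Your proposal is correct and follows essentially the same route as the paper: start from Proposition \ref{prop-JNlimit}, note that $V_{N,0}(p,t,s;m,n)$ converges uniformly to $V(p,t,s;m,n)$ on $D'_0$ (the $M\to\infty$ limit of Lemma \ref{lemma-VMNV}), apply the saddle point method of Proposition \ref{proposition-saddlemethod} together with the estimates inherited from \cite{CZ23-1} to each Fourier coefficient, and keep only the $(m,n)=(0,0)$ term. Your explicit evaluation of the $1/N$ correction $e^{C(t_0,s_0)}=-x_0(1-x_0)^{-3/2}$ via the critical point equation and the resulting prefactor $(-1)^p 4\pi e^{\pi\sqrt{-1}/4}N^{3/2}\omega(p)$ match the paper's formula (\ref{formula-omega0}).
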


\section{Related Questions}
In this final section, we give two
related questions which deserve to be studied further.

1. The first natural question is to study the asymptotic expansion formula for the colored Jones polynomial of the 
 double twist knot $\mathcal{K}_{p,s}$ at the different roots of unity. 
 
2. It is also interesting to study the asymptotic expansion for the Reshetikhin-Turaev invariants of the closed hyperbolic 3-manifolds obtained by $\frac{p}{q}$-surgery along the twist knot at the root of unity   $e^{\frac{4\pi\sqrt{-1}}{r}}$.

3. We prove the volume conjecture for twist knot $\mathcal{K}_p$ with $p\geq 6$ in this paper. Ohtsuki's work \cite{Oht16,Oht17} imply that the volume conjecture for twist knots $\mathcal{K}_2=5_2$ and  $\mathcal{K}_3=7_2$ holds. So the volume conjecture for twist knot $\mathcal{K}_4$ and $\mathcal{K}_5$  is still open. 

4. As to the twist knot $\mathcal{K}_{p}$ with $p\leq -1$, the volume conjecture for $\mathcal{K}_{-1}=4_1$ was proved firstly by Ekholm, $\mathcal{K}_{-2}=6_1$ was proved in \cite{OhtYok18}.     Our method seems can not be applied to the cases $\mathcal{K}_{-3}=8_1$ and $\mathcal{K}_{-4}=10_1$ directly, so from our point of view, it is also difficult to prove the volume conjecture for them.

\end{document}